\documentclass[a4paper,10pt]{article}
\usepackage{setspace}
\usepackage[utf8]{inputenc}
\usepackage{hyperref}
\usepackage{amssymb,amsmath,amsfonts,amsthm,textcomp,enumerate,mathrsfs,eufrak,version,mathtools}
\usepackage{array}
\usepackage{lipsum}
\usepackage{vmargin}
\usepackage{titlesec}
\usepackage[titletoc,toc,title]{appendix}
\titleformat{\subsubsection}{\Large\scshape\raggedright}{}{0em}{}[\titlerule]
\usepackage[final]{pdfpages}
\usepackage[T1]{fontenc}
\usepackage{color}
\usepackage[all,cmtip]{xy}
\newtheorem{theo}{Theorem}[section]
\newtheorem{prop}[theo]{Proposition}
\newtheorem{lem}[theo]{Lemma}
\newtheorem{cor}[theo]{Corollary}
\newtheorem{deftn}[theo]{Definition}

\title{Characteristic cycles, micro local packets and packets with cohomology}
\author{Nicol\'as Arancibia Robert}
\date{}
\begin{document}
\maketitle
\begin{abstract}
Relying on work of Kashiwara-Schapira 
and Schmid-Vilonen, we describe 
the behaviour of characteristic cycles with respect to the operation of geometric induction, the 
geometric counterpart of taking parabolic or cohomological induction in representation theory. 
By doing this, we are able to describe to some extent 
the characteristic cycle associated to an induced representation, 
in terms of the characteristic cycle of the representation being induced. More precisely, 
under the hypothesis that the infinitesimal character is regular (and dominant),  
we show that the 
characteristic cycle of an induced representation 
 splits in two terms. We describe the first term 
 precisely,  but we are not able to do the same for the second one. What we are able to say,  
is that this second term is supported on the boundary 
of the space generated by the inclusion 
in the flag variety of $G$,
of the flag variety of the Levi subgroup.
As a consequence, we prove that the cohomology packets defined by Adams and Johnson in \cite{Adams-Johnson} are micro-packets, that is to say that 
the cohomological constructions of \cite{Adams-Johnson} are  particular cases of the 
sheaf-theoretic ones in \cite{ABV}. 
\end{abstract}
\tableofcontents
\section{Introduction}
Let $G$ be a connected reductive algebraic group defined over a number field $F$.
In \cite{Arthur84} and \cite{Arthur89}, Arthur 
gives a conjectural description of the  discrete  spectrum of $G$  by introducing at each place $v$ 
of $F$ a set of parameters $\Psi_v(G)$, that should parameterize all the unitary representations of $G(F_v)$ that are of interest for global applications. 
More precisely, Arthur conjectured that attached to every parameter $\psi_v\in \Psi_v(G)$
we should have a finite set $\Pi_{\psi_v}(G(F_v))$, called an $A$-packet, 
of irreducible representations of $G(F_v)$, 
uniquely
characterized 
 by the following properties:
\begin{itemize}
\item $\Pi_{\psi_v}(G(F_v))$ consists of unitary representations.
\item The parameter $\psi_v$ corresponds to a unique $L$-parameter $\varphi_{\psi_v}$ and $\Pi_{\psi_v}(G(F_v))$ contains the $L$-packet associated to 
$\varphi_{\psi_v}$.
\item $\Pi_{\psi_v}(G(F_v))$ is the support of a stable virtual character distribution on $G(F_v)$.
\item $\Pi_{\psi_v}(G(F_v))$ verifies the ordinary and twisted spectral transfer identities predicted by the theory of endoscopy.
\end{itemize}
Furthermore, any representation occurring in the discrete spectrum of square integrable automorphic representations of $G$, should be a restricted product over all places of representations in the corresponding $A$-packets.


In the case when $G$ is a real reductive algebraic group, Adams, Barbasch and Vogan proposed in \cite{ABV} a candidate for an $A$-packet, proving in the process 
all of the predicted properties with the exception of the twisted endoscopic identity and unitarity. 
The packets in \cite{ABV}, to which we refer from now on as micro-packets or ABV-packets,
are defined by means of sophisticated geometrical methods.
As explained in the introduction of \cite{ABV}, the 
inspiration behind their construction 
comes from the combination of ideas of Langlands and Shelstad (concerning dual groups and endoscopy) with those of Kazhdan and
Lusztig (concerning the fine structure of irreducible representations), to describe the representations of $G(\mathbb{R})$ in terms of an appropriate geometry on an $L$-group. 
The geometric methods are 
remarkable, but they have the constraint
of being extremely difficult to calculate in practice. 
Without considering some exceptions, like the case of ABV-packets attached to tempered Arthur parameters (see Section 4.1 below) or to principal unipotent Arthur parameters (See Chapter 27 \cite{ABV} and Section 4.2 below),  
we cannot identify the members of an ABV-packet in any known classification 
(in the Langlands classification for example). 
The difficulty 
comes from the central role played by characteristic cycles in their construction. 
These cycles are geometric invariants 
that can be understood as a way to measure how far a constructible
sheaf is from being a local system.

In the present article, relying on work of Kashiwara-Schapira 
and Schmid-Vilonen, we describe 
the behaviour of characteristic cycles with respect to the operation of geometric induction, the 
geometric counterpart of taking parabolic or cohomological induction in representation theory. 
By doing this, we are able to describe to some extent
the characteristic cycle associated to an induced representation, 
in terms of the 
characteristic cycle of the representation being induced (see Theorem \ref{prop:ccLG} below). More precisely, 
under the hypothesis that the infinitesimal character is regular (and dominant),  
we show that the 
characteristic cycle of an induced representation 
 splits in two terms. We describe the first term 
 precisely,  but we are not able to do the same for the second one. What we are able to say,  
is that this second term is supported on the boundary 
of the space generated by the inclusion
in the flag variety of $G$,
of the flag variety of the Levi subgroup.
This description is enough for our aplications.
Before continuing with a more detailed description 
on the behaviour of characteristic cycles under induction,
 let us mention some consequences of it.
 
As a first application we have
the proof that the cohomology packets defined by Adams and Johnson in \cite{Adams-Johnson} are micro-packets.
In more detail, Adams and Johnson proposed in \cite{Adams-Johnson} a candidate for an $A$-packet
by attaching to any member in a particular family of Arthur parameters (see points (AJ1), (AJ2) and (AJ3) 
Section 4.3),
a packet consisting of 
representations cohomologically induced from unitary characters. 
Now, 
from the behaviour of characteristic cycles under induction, 
the description of the ABV-packets corresponding to any Arthur parameter 
in the family studied in \cite{Adams-Johnson},  
reduces to the description of ABV-packets corresponding to essentially principal unipotent Arthur parameters, which are well understood (see Section 4.2).
From this reduction
we prove in Theorem \ref{theo:ABV-AJ}, that the cohomological constructions of \cite{Adams-Johnson} are  particular cases of the ones in \cite{ABV}. It is important to mention
 that the equality between Adams-Johnson and ABV-packets is known to experts, but to my knowledge no proof of it can be found in the literature. Let us also say that from this equality  
and the proof in \cite{AMR} that for classical groups
the packets defined in \cite{Adams-Johnson} are $A$-packets (\cite{Arthur}),
we conclude that in the
framework of \cite{Adams-Johnson} and for classical groups, the three constructions of $A$-packets coincide.

A second application 
is related 
to the proof that for classical groups the $A$-packets introduced in \cite{Arthur} are ABV-packets (work in progress with Jeffrey Adams and Paul Mezo).
An important step in the proof, is the description of the ABV-packets for the general linear group.
The understanding of the behaviour of characteristic cycles under induction 
is crucial in the proof that for the general linear group,
ABV-packets are Langlands packets, that is, they consist of a single representation.


Let us give now a quick overview on  
how geometric induction affects characteristic cycles.
We begin by introducing the geometric induction functor. 
Suppose $G$ is a connected reductive complex algebraic group 
with Lie algebra $\mathfrak{g}$, and let $\sigma$ be a real form of $G$  
 (see Equation (2.1)(a-c) \cite{ABV}).
Let $\theta$ be the Cartan involution attached
to $\sigma$ as in 
Equation (5d)-(5g) \cite{AVParameters}, and write 
$K=G^{\theta}$ for the group of its fixed points. 
Suppose $Q$ is a parabolic subgroup of $G$ 
whose Lie algebra $\mathfrak{q}$ is \textbf{germane} with respect to the pair $(G,K)$ (i.e. $\mathfrak{q}$ satisfies the equivalent conditions of Proposition 4.74 \cite{Knapp-Vogan}). 
 Let $Q=LU$ be the 
Levi decomposition of $Q$. By Proposition 
4.74 \cite{Knapp-Vogan} and Proposition 4.78 \cite{Knapp-Vogan}
$L$ is a reductive subgroup stable under $\sigma$, and the restriction of $\theta$ to $L$ defines a Cartan involution for $L$. We are going to be interested in two extreme cases of the notion of germane. One is that the parabolic subgroup $Q$ is real, that is stable under $\sigma$. The other case is that 
$Q$ is stable under the Cartan involution $\theta$. 

Denote by $X_G$ the flag variety of $G$,  
 and consider the fibration $X_G\rightarrow G/Q$. Its fiber over $Q$ 
can be identified with the flag variety $X_L$ of $L$. We denote the inclusion of that fiber in 
$X_G$ by 
\begin{align}\label{eq:mapvarieties}
\iota:X_L&\longrightarrow X_G.
\end{align}
Since $K$ is the set of fixed point of the Cartan involution $\theta$, by Propostion 6.16 \cite{ABV}, $K$
acts over the flag variety 
$X_G$ of $G$ with finitely many orbits. Similarly, $K\cap L=L^{\theta}$, hence $K\cap L$ acts with finitely many orbits over $X_L$.

Let $D_{c}^{b}(X_G,K)$ be the 
$K$-equivariant bounded derived category of sheaves of complex vector spaces on $X_G$ having cohomology sheaves constructible with respect to an algebraic stratification of $X_G$. Living inside this category we have the subcategory 
$\mathcal{P}(X_G,K)$ of $K$-equivariant perverse sheaves on $X_G$.
Set $\mathcal{D}_{X_G}$ to be the sheaf of algebraic differential operators on $X_G$. The Riemann-Hilbert correspondence (see Theorem 7.2.1\cite{Hotta} and Theorem 7.2.5 \cite{Hotta}) defines an equivalence of categories between $\mathcal{P}(X_G,K)$ and the category  
$\mathcal{D}(X_G,K)$ 
of $K$-equivariant 
$\mathcal{D}_{X_{G}}$-modules on $X_G$. 
Now, write $\mathcal{M}(\mathfrak{g},K)$ for the category of $(\mathfrak{g}, K)$-modules of $G$, and $\mathcal{M}(\mathfrak{g},K, I_{X_G})$ for the subcategory of $(\mathfrak{g}, K)$-modules of $G$ annihilated by the kernel $I_{X_G}$ of the operator representation (see equations (\ref{eq:operatorrepresentation}) and (\ref{eq:keroperatorrepresentation}) below).
The categories $\mathcal{M}(\mathfrak{g},K, I_{X_G})$ and $\mathcal{D}(X_G,K)$ are identified through the Beilinson-Bernstein correspondence (\cite{BB}),
 and 
composing this functor with the Riemann-Hilbert correspondence we obtain the equivalence of categories 
\begin{align*}
\Phi_{X_G}:\mathcal{M}(\mathfrak{g},K,I_{X_G}) \xrightarrow{\sim} \mathcal{P}(X_G,K).
\end{align*}
Consequently there is a bijection between the corresponding Grothendieck groups 
$K\mathcal{M}(\mathfrak{g},K,I_{X_G})$ and $K\mathcal{P}(X_G,K)$.
Similarly, by denoting 
$\mathfrak{l}$ the Lie algebra of $L$ and $K_L=K\cap L$, we define 
$\mathcal{P}(X_L,K_L)$, 
$\mathcal{M}(\mathfrak{l},K_L,I_{X_L})$ and $\Phi_{X_L}:\mathcal{M}(\mathfrak{l},K_L,I_{X_L}) \xrightarrow{\sim} \mathcal{P}(X_L,K_L)$.
Now, let 
\begin{align}\label{eq:cohomologicalinductionintro}
\left({}^{u}\mathscr{R}_{(\mathfrak{q},K_{L})}^{(\mathfrak{g},K)}\right)^{j}:\mathcal{M}(\mathfrak{l},K_L)\rightarrow \mathcal{M}(\mathfrak{g},K) 
\end{align}
be the induction functor introduced in Equations
(11.71c)-(11.71d) \cite{Knapp-Vogan}, and define 
\begin{align*}
\mathscr{R}_{(\mathfrak{q},K_{L})}^{(\mathfrak{g},K)}:&\mathcal{M}(\mathfrak{l},K_L)\rightarrow \mathcal{M}(\mathfrak{g},K)\\
&V\mapsto 
\left\{
\begin{array}{ll}
\left({}^{u}\mathscr{R}_{(\mathfrak{q},K_{L})}^{(\mathfrak{g},K)}\right)^{\text{dim}(\mathfrak{u}\cap \mathfrak{k})}(V\otimes(\bigwedge^{\text{top}}\mathfrak{u}))& \text{ if }Q \text{ is }\theta\text{-stable}\\
\left({}^{u}\mathscr{R}_{(\mathfrak{q},K_{L})}^{(\mathfrak{g},K)}\right)^{0}(V\otimes\mathbb{C}_{\rho})& \text{if }Q \text{ is real}.
\end{array} 
\right.
\end{align*}
When $Q$ is $\theta$-stable $\mathscr{R}_{(\mathfrak{q},K_{L})}^{(\mathfrak{g},K)}(V)$ corresponds to the Vogan-Zuckerman cohomological induction functor. When $Q$ is real, by Proposition 11.57 \cite{Knapp-Vogan},  
 $\mathscr{R}_{(\mathfrak{q},K_{L})}^{(\mathfrak{g},K)}(V)$ is the   
underlying $(\mathfrak{g},K)$-module of the parabolically induced representation $\text{Ind}_{Q(\sigma,\mathbb{R})}^{G(\sigma,\mathbb{R})}(\pi)$, with $\pi$ the admissible representation of $L(\sigma,\mathbb{R})$ with underlying $(\mathfrak{l},K_{L})$-module $V$. 

The induction functor in representation theory has 
 a geometric analogue 
 $$I_{L}^{G}:D_{c}^{b}(X_L,K_L)\rightarrow D_{c}^{b}(X_G,K)$$
defined through the Bernstein induction functor (see \cite{ViMir}).
The Bernstein induction functor 
$$\Gamma_{L}^{G}:D_{c}^{b}(X_G,K_L)\rightarrow D_{c}^{b}(X_G,K)$$
is 
the right adjoint of the forgetful functor $\text{Forget}_{K}^{K_L}:D_{c}^{b}(X_G,K)\rightarrow D_{c}^{b}(X_G,K_L)$. The geometric induction functor 
 is defined then  
as the composition
$$I_{L}^{G}=\Gamma_{L}^{G}\circ R\iota_{\ast}.$$ 
It satisfies the identity
$$I_{L}^{G}\circ \Phi_{X_L}=\Phi_{X_G}\circ \mathscr{R}_{(\mathfrak{q},K_{L})}^{(\mathfrak{g},K)},$$ 
which induces the following commutative diagram between the corresponding Grothendieck groups 
\begin{align}\label{eq:diagramintroduction}
  \xymatrix{
    K\mathcal{M}(\mathfrak{g},K,I_{X_{G}}) \ar[r]^{\Phi_{X_G}} & K\mathcal{P}(X_G,K)\\
    K\mathcal{M}(\mathfrak{l},K_{L},I_{X_{L}}) \ar[u]^{\mathscr{R}_{(\mathfrak{q},K_{L})}^{(\mathfrak{g},K)}}\ar[r]^{\Phi_{X_L}} & K\mathcal{P}(X_L,K_{L})\ar[u]^{I_{L}^{G}}. }
\end{align}

Now that the geometric induction functor has been introduced, we  
turn to the description of 
how geometric induction affects characteristic cycles. 
The characteristic cycle of a perverse sheaf can be constructed  
through Morse theory, or via the Riemann-Hilbert correspondence as the characteristic cycle of the 
associated $\mathcal{D}$-module. In any case, the characteristic cycle 
can be seen as a map $CC:K\mathcal{P}(X_G,K)\rightarrow \mathscr{L}(X_G,K)$
from $K\mathcal{P}(X_G,K)$ to the set of formal sums
$$\mathscr{L}(X_G,K)=\left\{\sum_{{K-\mathrm{orbits }~S~\mathrm{in}~X_G}}m_S[\overline{T^{\ast}_{S}X_G}]:m_S\in\mathbb{Z}\right\}.$$

Schmid and Vilonen, combining the work of Kashiwara-Schapira on proper direct images, and  
their own work on direct open embeddings, 
describe in \cite{SV} the effect on characteristic cycles 
of taking direct images by an arbitrary algebraic morphism. This, applied to the Bernstein induction functor, 
leads to a map $\left(I_{L}^{G}\right)_{\ast}:\mathscr{L}(X_L,K_L)\rightarrow\mathscr{L}(X_G,K)$ that extends (\ref{eq:diagramintroduction}) into the commutative diagram:  
\begin{align*}
  \xymatrix{
   K\mathcal{M}(\mathfrak{g},K,I_{X_{G}}) \ar[r]^{\Phi_{X_G}} & K\mathcal{P}(X_{G},K)\ar[r]^{CC} & \mathscr{L}(X_G,K)\\
    K\mathcal{M}(\mathfrak{l},K_{L},I_{X_{L}}) \ar[u]^{\mathscr{R}_{(\mathfrak{q},K_{L})}^{(\mathfrak{g},K)}}\ar[r]^{\Phi_{X_L}} & K\mathcal{P}(X_{L},K_{L})\ar[u]^{I_{L}^{G}}\ar[r]^{CC} & \mathscr{L}(X_{L},K_L).\ar[u]^{\left(I_{L}^{G}\right)_{\ast}} 
     }
\end{align*}
Consequently, for every $K_L$-equivariant perverse sheaf $\mathcal{F}_L$ on $X_{L}$ we have
$$CC(I_{L}^{G}\mathcal{F}_L)=\left(I_{L}^{G}\right)_{\ast}CC(\mathcal{F}_L).$$
The description of the image of $\left(I_{L}^{G}\right)_{\ast}$ 
is given in Theorem \ref{prop:ccLG} through a formula for 
$CC(I_{L}^{G}\mathcal{F}_L)$ 
in terms of the characteristic cycle of $\mathcal{F}_L$. 
More explicitly, writing 
$$CC(\mathcal{F}_{L})=\sum_{K_{L}-\mathrm{orbits }~S_L~\mathrm{in}~X_{L}} m_{S_L}[\overline{T_{S_L}^{\ast}X_{L}}]$$
for the characteristic cycle of $\mathcal{F}_{L}$, 
we prove 
that the image of $\mathcal{F}_L$ under 
$\left(I_{L}^{G}\right)_{\ast}$ is equal to
\begin{align}\label{eq:dernierintro}
CC(\Gamma_{L}^{G}\mathcal{F}_L)&=\left(I_{L}^{G}\right)_{\ast}CC(\mathcal{F}_L)\\
&=\sum_{K_L-\mathrm{orbits }~S_L~\mathrm{in}~X_L} m_{S_{L}}[\overline{T_{K\cdot\iota(S_L)}^{\ast}X_G}]+
\sum_{\substack{K-orbits~S\text{~in~}\\
(K\cdot \iota(X_{L}))^c~\cap~ \partial(K\cdot \iota(X_{L})) }} m_{S}[\overline{T_{S}^{\ast}X_G}].
\end{align}
%
From this equality we are able to deduce that 
the cohomology packets 
introduced by Adams and Johnson in 
\cite{Adams-Johnson}, are  examples of micro-packets.
The comparison between both types of packets is done in Section 4 where we also describe 
the family of Arthur parameters considered in the work of Adams and Johnson and
give a description of the Adams-Johnson packets. 

We continue by outlining the contents of the paper.
 In Section 2 we introduce the geometric induction functor as a geometric counterpart of the 
induction functor in representation theory. We review the work of Kashiwara-Schapira \cite{Kashiwara-Schapira} on proper
direct images and the work of Schmid-Vilonen \cite{SV} on open direct images. 
We end the section by explaining how characteristic cycles behave under geometric induction. 

Section 3 is devoted to presenting the objects of \cite{ABV} required 
to define the micro-packets.
We recall the notion of real forms and introduce the concepts of extended group and representation of a strong real form. We define the dual objects that are going to parameterize the representations 
and review the Local Langlands Correspondence as stated in \cite{ABV}. 

We start Section 4 by introducing micro-packets. Then we describe the micro-packets corresponding to 
tempered parameters (Section 4.1) and to essentially principal unipotent Arthur parameters (Section 4.2).
We end the section by studying the case of cohomologically induced packets or Adams-Johnson packets. Finally, relying on the work done in Section 2 we prove that Adams-Johnson packets are micro-packets (Section 4.3). 

It is in sections 4.2 and 4.3, 
and in the study of Section 2 of the geometric induction functor,  
that most of the original work of the present article is done.\\

To end with this introduction, let us mention that in \cite{Trappa}
(see Proposition 2.4 \cite{Trappa} and Corollary 2.5 \cite{Trappa}) the authors
obtain a similar result to the one of Theorem \ref{prop:ccLG} (see Equation (\ref{eq:dernierintro}) above) by using the definition of characteristic cycles in terms of normal slices (see Chapter II.6.A \cite{GM}).\\ 

\textbf{Acknowledgements:} The author wishes to thank Paul Mezo for useful discussions and 
enlightening remarks during the preparation of this document. The author would
also like to thank the referee for a thorough reading of the manuscript, 
and for their many helpful comments.

\section{Geometric induction}

In this section we recall Bernstein's induction functor
and use it to define a geometric analogue of the induction functor in representation theory. 
Once the geometric induction functor has been introduced, 
we turn to explain how it affects characteristic cycles. By doing this, we are able to describe the characteristic cycle associated to an induced representation (through the Beilinson-Bernstein Correspondence), in terms of the characteristic cycle of the representation being induced. As we will see in Section 4.3, this will have as a consequence the possibility to reduce in some particular
cases the computation of the micro-packets associated to a group, to the computation of the micro-packets associated to a Levi subgroup.\\

Let us begin by introducing the geometric objects required to the definition of characteristic cycles and
the geometric induction functor. These 
objects are also going to be required in Section 4 
for the definition of the micro-packets.\\ 

Suppose $X$ is a smooth complex algebraic variety 
on which an algebraic group $H$ acts with finitely many orbits. Define (see Appendix B \cite{ViMir} and Definition 7.7 \cite{ABV})\\

$\bullet$ $D_{c}^{b}(X)$ to be the bounded derived category of sheaves of complex vector spaces on $X$ having cohomology sheaves constructible with respect to an algebraic stratification 
 on $X$.\\

$\bullet$ $D_{c}^{b}(X,H)$ to be the subcategory of $D_{c}^{b}(X)$ consisting of 
$H$-equivariant sheaves of complex vector spaces on $X$ having cohomology sheaves constructible with respect to the algebraic stratification 
defined by the 
$H$-orbits
on $X$.\\
~\\
Living inside this 
last category we have the category of $H$-equivariant perverse sheaves on $X$ (see Definition 2.1.2 \cite{BBD}). We write\\

$\bullet$ $\mathcal{P}(X,H)$ for be the category of $H$-equivariant perverse sheaves on $X$.\\
~\\
Next, set $\mathcal{D}_{X}$ to be the sheaf of algebraic differential operators on $X$ and define:\\

$\bullet$ $\mathcal{D}(X,H)$ to be the category of $H$-equivariant coherent sheaves of 
$\mathcal{D}_{X}$-modules on $X$.\\

$\bullet$ ${D}^{b}(\mathcal{D}_X,H)$ to be the $H$-equivariant bounded derived category of sheaves of $\mathcal{D}_{X}$-modules on $X$ having coherent cohomology sheaves.\\
~\\
The categories $\mathcal{P}(X,H)$ and $\mathcal{D}(X,H)$
are abelian, and every object has finite length. To 
each of them corresponds a Grothendieck group that we denote respectively by 
\begin{equation}\label{eq:ggroups}
K\mathcal{P}(X,H)\quad \text{and}\quad K\mathcal{D}(X,H).
\end{equation}
The four previous categories 
 are related through the Riemann-Hilbert correspondence.
\begin{theo}[Riemann-Hilbert Correspondence, see Theorem 7.2.1 \cite{Hotta}, Theorem 7.2.5 \cite{Hotta}
and Theorem 7.9 \cite{ABV}]\label{theo:rhcorrespondence}
The de Rham functor induces an equivalence of categories
\begin{align*}
DR:{D}^{b}(\mathcal{D}_X,H)\rightarrow D_{c}^{b}(X,H)
\end{align*}
such that if we restrict $DR$ to the full subcategory $\mathcal{D}(X,H)$ of 
${D}^{b}(\mathcal{D}_X,H)$ we obtain an equivalence of categories
\begin{align*}
DR:\mathcal{D}(X,H)\rightarrow\mathcal{P}(X,H).
\end{align*}
This induces an isomorphism of Grothendieck groups
\begin{align*}
DR:K\mathcal{D}(X,H)\rightarrow K\mathcal{P}(X,H).
\end{align*}
\end{theo}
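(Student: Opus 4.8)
The plan is to deduce the equivariant statement from the classical (non-equivariant) Riemann--Hilbert correspondence of Kashiwara and Mebkhout, together with a descent argument for the $H$-action. First I would recall that, in a suitable normalization, the de Rham functor $DR_X$ induces an equivalence between the full subcategory of $D^b(\mathcal{D}_X)$ of complexes with regular holonomic cohomology and $D^b_c(X)$, that it is $t$-exact for the natural $t$-structure on the source and the perverse $t$-structure on the target, and that it therefore restricts to an equivalence between the abelian category of regular holonomic $\mathcal{D}_X$-modules and the category $\mathcal{P}(X)$ of perverse sheaves on $X$; moreover it admits an explicit quasi-inverse (the ``solution'' functor).

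Second, I would check that the four categories in the statement lie within the scope of the classical correspondence. Because $H$ acts on $X$ with finitely many orbits, the characteristic variety of any $H$-equivariant coherent $\mathcal{D}_X$-module is contained in the finite union $\bigcup_S \overline{T^\ast_S X}$ of conormals to the orbits; hence such a module is holonomic, and its $H$-equivariance forces regular singularities (as in Beilinson--Bernstein localization), so $\mathcal{D}(X,H)$ is a full subcategory of the regular holonomic $\mathcal{D}_X$-modules and $D^b(\mathcal{D}_X,H)$ of the regular holonomic derived category. Dually, $D^b_c(X,H)$ is by construction a full subcategory of $D^b_c(X)$ and $\mathcal{P}(X,H)$ of $\mathcal{P}(X)$.

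Third, I would transport the $H$-equivariant structure. An object of $D^b(\mathcal{D}_X,H)$ amounts to a regular holonomic complex $M$ on $X$ equipped with a descent datum with respect to the two maps $a,\mathrm{pr}:H\times X\rightrightarrows X$ (the action and the projection), subject to a cocycle condition on $H\times H\times X$; since $DR$ commutes with inverse images along smooth morphisms up to a shift, and since $a$ and $\mathrm{pr}$ have the same relative dimension so the shifts cancel, applying $DR$ turns this descent datum into an $H$-equivariant structure on $DR_X(M)$, and the cocycle condition is preserved because it is an equality of isomorphisms and any functor respects such equalities. This yields a functor $DR:D^b(\mathcal{D}_X,H)\to D^b_c(X,H)$; running the same argument with the quasi-inverse, and using that the two composites are already naturally isomorphic to the identity non-equivariantly, shows it is an equivalence. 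For the restriction to the hearts I would use that the forgetful functor to the non-equivariant category is conservative and $t$-exact, so an object of $D^b(\mathcal{D}_X,H)$ lies in $\mathcal{D}(X,H)$ (resp.\ in $\mathcal{P}(X,H)$) exactly when it does after forgetting the equivariance; combined with the $t$-exactness of $DR_X$ this gives the equivalence $\mathcal{D}(X,H)\xrightarrow{\sim}\mathcal{P}(X,H)$. Finally, since both hearts are abelian of finite length, this exact equivalence induces an isomorphism $K\mathcal{D}(X,H)\xrightarrow{\sim}K\mathcal{P}(X,H)$.

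The step I expect to be the main obstacle is the third one: one must be careful that the model used for the $H$-equivariant (derived) category really identifies an object with a descent datum plus a cocycle, and that $DR$ is genuinely compatible with the base-change and composition isomorphisms entering the cocycle condition. The finiteness of the orbit set is precisely what keeps this bookkeeping harmless, since it guarantees regular holonomicity on the $\mathcal{D}$-module side and constructibility along a single fixed stratification on the sheaf side, so that everything stays inside categories where the classical Kashiwara--Mebkhout theorem applies verbatim.
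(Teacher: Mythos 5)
The paper itself does not prove this theorem: it is quoted as background, with the proof delegated to Theorems 7.2.1 and 7.2.5 of \cite{Hotta} and Theorem 7.9 of \cite{ABV}, so there is no internal argument to compare yours against. That said, your strategy --- invoke the classical Kashiwara--Mebkhout correspondence, use the finiteness of the orbit set to place the equivariant categories inside the regular holonomic and constructible ones, and then transport the equivariant structure --- is the standard route and is essentially what the cited references do to obtain the heart-level equivalence $\mathcal{D}(X,H)\simeq\mathcal{P}(X,H)$ and the induced isomorphism $K\mathcal{D}(X,H)\cong K\mathcal{P}(X,H)$. One small improvement: rather than asserting that equivariance ``forces regular singularities (as in Beilinson--Bernstein localization)'', cite Theorem 11.6.1 of \cite{Hotta}, which states precisely that a coherent $\mathcal{D}_X$-module equivariant under a group acting with finitely many orbits is regular holonomic; that is the clean input your step two needs.

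The genuinely delicate point is the one you flag in your last paragraph but do not resolve: your step three treats an object of $D^{b}(\mathcal{D}_X,H)$ or $D^{b}_{c}(X,H)$ as an underlying complex together with a single isomorphism $a^{\ast}\mathcal{F}\cong \mathrm{pr}^{\ast}\mathcal{F}$ satisfying a cocycle identity. In the bounded derived category this naive descent datum does not adequately model the equivariant derived category (it lacks the higher compatibilities, the resulting category need not be triangulated, and the ``forgetful functor is conservative and $t$-exact'' step presupposes a model in which such statements make sense); this is exactly why the equivariant derived category is set up via the formalism of Bernstein--Lunts or, as in Appendix B of \cite{ViMir} and Chapter 7 of \cite{ABV}, via the orbit stratification, and why the equivariant Riemann--Hilbert correspondence is stated there as a theorem rather than derived as a formal corollary of the non-equivariant one. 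For the abelian hearts your argument is fine: for $H$ acting with finitely many orbits an equivariant structure on a $\mathcal{D}$-module or a perverse sheaf genuinely is a descent datum, $DR$ commutes with smooth pullbacks up to the cancelling shifts, and exactness of the resulting equivalence gives the $K$-group isomorphism --- and since only the heart equivalence and the Grothendieck-group statement are used in this paper, the essential content of your proposal is sound. But as a proof of the first displayed equivalence $DR:D^{b}(\mathcal{D}_X,H)\to D^{b}_{c}(X,H)$ it has a gap unless you fix a model of the equivariant derived category in which objects really are descent data with all higher coherences and then check the compatibility of $DR$ with the structure maps of that model.
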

We use the previous isomorphism to identify the Grothendieck groups of (\ref{eq:ggroups})
writing simply $K(X,H)$ instead of $K\mathcal{P}(X,H)$ and $K\mathcal{D}(X,H).$\\

In this paper we are mostly interested in the case of $X$ being a flag variety. 
More precisely, let $G$ be a connected reductive complex algebraic group
with Lie algebra $\mathfrak{g}$. Fix a real form $\sigma$ of $G$ (see Equation (2.1)(a-c) \cite{ABV}) and 
write $\theta$ for the Cartan involution attached
to $\sigma$ as in 
Equation (5d)-(5g) \cite{AVParameters}. 
Let $K=G^{\theta}$ be the group of fixed points of $\theta$. 
 The flag variety $X_G$ of $G$ is defined as the set of all Borel subgroups of $G$ (or equivalently as the set of all Borel subalgebras of $\mathfrak{g}$). The group $G$ acts on $X_G$ by conjugation, 
this action is transitive and if we restrict it to 
$K$, the number of $K$-orbits is finite (See the proof of Propostion 6.16 \cite{ABV}). Most of the work will be done
in the framework of the categories $D_{c}^{b}(X_{G},K)$, $D^{b}(\mathcal{D}_{X_G},K)$ and the subcategories of $K$-equivariant perverse sheaves and $K$-equivariant $\mathcal{D}_{X_G}$-modules on $X_{G}$.\\

To compare the geometric induction functor with induction in representation theory, 
we need to relate the categories just introduced with the category of $(\mathfrak{g},K)$-modules of $G$.
This is done through the Beilinson-Bernstein correspondence, that we describe below 
 following Chapter 8 \cite{ABV}.   

 Define
\begin{align*}
\mathcal{M}(\mathfrak{g},K) \text{ to be the category of }(\mathfrak{g}, K)\text{-modules of }G. 
\end{align*}
We have an equivalence of categories between $\mathcal{M}(\mathfrak{g},K)$ 
and the category of (infinitesimal equivalence classes of) admissible representations of $G(\mathbb{R},\sigma)$. 
Using this equivalence, we shall blur the distinction between these two categories by referring to their objects indiscriminately as representations of $G(\mathbb{R},\sigma)$.

As before, let $X_{G}$ be the flag variety of $G$ and write
$\mathcal{D}_{X_{G}}$ for the sheaf of algebraic differential operators on $X_{G}$. We define
$$D_{X_{G}}=\Gamma \mathcal{D}_{X_{G}}$$
to be the algebra of global sections of $\mathcal{D}_{X_{G}}$. We recall that every
element of $\mathfrak{g}$ defines a global vector field on $X_{G}$
and that this identification extends to an algebra homomorphism
\begin{align}\label{eq:operatorrepresentation}
\psi_{X_{G}}:U(\mathfrak{g})\longrightarrow D_{X_{G}}
\end{align}
called the \textbf{operator representation} of $U(\mathfrak{g})$.
The kernel of $\psi_{X_{G}}$ is a two-sided ideal denoted by
\begin{align}\label{eq:keroperatorrepresentation}
I_{X_{G}}=\text{ker}\psi_{X_{G}}.
\end{align}
Now, if $\mathcal{M}$ is any sheaf of $\mathcal{D}_{X_{G}}$-modules, 
then the vector space $M=\Gamma \mathcal{M}$ obtained by taking global sections
is in a natural way a $D_{X_{G}}$-module and therefore, via $\psi_{X_{G}}$, a module for $U(\mathfrak{g})/I_{X_{G}}$.
The functor sending the $\mathcal{D}_{X_{G}}$-module $\mathcal{M}$ to the  
$U(\mathfrak{g})/I_{X_{G}}$-module $M$ is called the \textbf{global sections functor}.
In the other direction, if $M$ is any module for $U(\mathfrak{g})/I_{X_{G}}$ 
then we may form the tensor product
$$\mathcal{M}=\mathcal{D}_{X_{G}}\otimes_{\psi_{X_{G}}(U(\mathfrak{g})/I_{X_{G}})} M.$$
This is a sheaf of $\mathcal{D}_{X_{G}}$-modules on $X_{G}$. 
The functor sending $M$ to $\mathcal{M}$ is called \textbf{localization}. 
\begin{theo}[Beilinson-Bernstein localization theorem, 
see \cite{BB}, Theorem 3.8 \cite{BoBrI}, Theorem 1.9 \cite{BoBrIII}
and Theorem 8.3 \cite{ABV}]\label{theo:bbcorespondance}
We have:
\begin{enumerate}[i.]
\item The operator representation $\psi_{X_{G}}:U(\mathfrak{g})\longrightarrow D_{X_{G}}$ is surjective.
\item The global sections and localization functors provide an equivalence of categories between
quasicoherent sheaves of $\mathcal{D}_{X_{G}}$-modules on $X_G$ and modules for $U(\mathfrak{g})/I_{X_{G}}$.
\item Let
\begin{align*}
\mathcal{M}(\mathfrak{g},K,I_{X_{G}})\text{ be the category of }(\mathfrak{g}, K)\text{-modules of }G\text{ annihilated by }I_{X_G}. 
\end{align*}
Then the global sections functor and localization functor provide an equivalence of categories between:
$$\mathcal{D}(X_G,K)\quad\text{ and }\quad \mathcal{M}(\mathfrak{g},K,I_{X_{G}}).$$
This induces an isomorphism of Grothendieck groups
$K(X_G,K)\rightarrow K\mathcal{M}(\mathfrak{g},K,I_{X_{G}}).$
\end{enumerate}
\end{theo}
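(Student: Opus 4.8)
The plan is to follow the original argument of Beilinson and Bernstein, organized around the notion of $\mathcal{D}$-affineness: a smooth variety $X$ with a sheaf of (twisted) differential operators $\mathcal{A}$ is $\mathcal{A}$-affine if $\Gamma(X,-)$ is exact on quasicoherent $\mathcal{A}$-modules and annihilates no nonzero such module. The heart of the matter is to show that the flag variety $X_G$ is $\mathcal{D}_{X_G}$-affine, where $\mathcal{D}_{X_G}$ is the sheaf of untwisted differential operators; in the Beilinson--Bernstein dictionary this sheaf corresponds to the regular dominant infinitesimal character $\rho$, and this regularity is exactly what drives the argument. Once $\mathcal{D}$-affineness and part (i) are in hand, parts (ii) and (iii) are essentially formal.

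First I would prove part (i), the surjectivity of $\psi_{X_G}\colon U(\mathfrak{g})\to D_{X_G}$. Equip $\mathcal{D}_{X_G}$ with its order filtration; then $\mathrm{gr}\,\mathcal{D}_{X_G}\cong \pi_\ast\mathcal{O}_{T^\ast X_G}$, so the induced filtration on $D_{X_G}$ has $\mathrm{gr}\,D_{X_G}\hookrightarrow \Gamma(T^\ast X_G,\mathcal{O}_{T^\ast X_G})$. The moment map $\mu\colon T^\ast X_G\to\mathfrak{g}^\ast$ is the Springer resolution, proper and birational onto the nilpotent cone $\mathcal{N}$, which is normal (Kostant); hence $\mu_\ast\mathcal{O}_{T^\ast X_G}=\mathcal{O}_{\mathcal{N}}$ and $\Gamma(T^\ast X_G,\mathcal{O}_{T^\ast X_G})=\mathbb{C}[\mathcal{N}]$. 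On the other side $\mathrm{gr}\,U(\mathfrak{g})=\mathbb{C}[\mathfrak{g}^\ast]$ and $\mathrm{gr}\,\psi_{X_G}$ is the restriction map $\mathbb{C}[\mathfrak{g}^\ast]\to\mathbb{C}[\mathcal{N}]$, which is surjective. Thus $\mathrm{gr}\,D_{X_G}=\mathbb{C}[\mathcal{N}]$ and $\mathrm{gr}\,\psi_{X_G}$ is onto; a standard argument for exhaustive filtrations concentrated in non-negative degrees then gives surjectivity of $\psi_{X_G}$ itself, together with an identification $\mathrm{gr}\,(U(\mathfrak{g})/I_{X_G})\cong\mathbb{C}[\mathcal{N}]\cong\mathrm{gr}\,D_{X_G}$.

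Next I would establish the $\mathcal{D}$-affineness of $X_G$, whose crux is the vanishing $H^i(X_G,\mathcal{M})=0$ for all $i>0$ and all quasicoherent $\mathcal{D}_{X_G}$-modules $\mathcal{M}$. I would reduce this to a cohomology-vanishing statement about $\mathcal{O}_{X_G}$-modules, filtering $\mathcal{M}$ by the order filtration (so $\mathrm{gr}\,\mathcal{M}$ is a module over $\mathrm{gr}\,\mathcal{D}_{X_G}=\mathrm{Sym}\,T_{X_G}$), and then invoking the vanishing theorem for the relevant twisted line bundles on $G/B$ — this is the step that fails at the singular infinitesimal characters and works here precisely because $\rho$ is regular dominant. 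Together with exactness of $\Gamma$ on quasicoherent $\mathcal{D}_{X_G}$-modules and the non-vanishing statement, this yields $\mathcal{D}$-affineness. Granting this and part (i), part (ii) follows formally: the localization functor $M\mapsto \mathcal{D}_{X_G}\otimes_{\psi_{X_G}(U(\mathfrak{g})/I_{X_G})}M$ is left adjoint to $\Gamma$, and $\mathcal{D}$-affineness plus surjectivity of $\psi_{X_G}$ force the unit and counit to be isomorphisms, giving an equivalence between quasicoherent $\mathcal{D}_{X_G}$-modules and $U(\mathfrak{g})/I_{X_G}$-modules. For part (iii) I would observe that both functors are $G$-equivariant, hence preserve $K$-equivariance and intertwine the $\mathfrak{g}$-actions; restricting the equivalence to $K$-equivariant coherent $\mathcal{D}_{X_G}$-modules — which correspond under $\Gamma$ to finitely generated $U(\mathfrak{g})/I_{X_G}$-modules carrying a compatible algebraic $K$-action, i.e. to $(\mathfrak{g},K)$-modules annihilated by $I_{X_G}$ — gives $\mathcal{D}(X_G,K)\simeq\mathcal{M}(\mathfrak{g},K,I_{X_G})$. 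I expect the cohomology vanishing at the core of $\mathcal{D}$-affineness to be the main obstacle; the rest is either the (standard) geometry of the Springer resolution or formal nonsense once acyclicity is available.
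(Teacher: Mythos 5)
A preliminary remark on the comparison you were asked about: the paper does not prove this statement at all --- it is quoted from the literature (\cite{BB}, \cite{BoBrI}, \cite{BoBrIII}, Theorem 8.3 of \cite{ABV}) --- so what you have written is a reconstruction of the classical Beilinson--Bernstein proof rather than an alternative to an internal argument. Your part (i) is the standard argument and is sound: properness and birationality of the moment map onto the nilpotent cone $\mathcal{N}$, normality of $\mathcal{N}$, the resulting identification $\Gamma(T^{\ast}X_G,\mathcal{O}_{T^{\ast}X_G})\cong\mathbb{C}[\mathcal{N}]$, surjectivity on the graded level, and the usual filtration argument give surjectivity of $\psi_{X_G}$. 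Likewise, granting $\mathcal{D}$-affineness, your derivation of (ii) from the adjunction and of (iii) by matching $K$-equivariant coherent modules with finitely generated modules carrying a compatible algebraic $K$-action is the standard formal part.

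The genuine gap is in the step you yourself identify as the crux, and the mechanism you sketch for it would not work. Filtering a $\mathcal{D}_{X_G}$-module $\mathcal{M}$ by an order (good) filtration produces graded pieces that are arbitrary coherent sheaves on $T^{\ast}X_G$ --- modules over $\mathrm{Sym}\,T_{X_G}$, not direct sums of the line bundles $\mathcal{O}(\mu)$ on $G/B$ --- so there is no Borel--Weil--Bott type vanishing to invoke at that point; moreover good filtrations exist only for coherent modules, while the vanishing must hold for all quasicoherent ones. The actual Beilinson--Bernstein argument is different: one tensors $\mathcal{M}$ over $\mathbb{C}$ with a finite-dimensional representation $V$ of $G$ and uses the canonical filtration of $\mathcal{M}\otimes V$ by $\mathcal{D}$-submodules whose subquotients are the twists $\mathcal{M}\otimes\mathcal{O}(\nu)$ by the weights $\nu$ of $V$; a central-character (Casimir, or translation-functor) comparison splits off the extreme subquotient, and from this one deduces both the vanishing $H^{i}(X_G,\mathcal{M})=0$ for $i>0$ (this is where dominance of $\rho$ enters) and conservativity of $\Gamma$ (this is where regularity enters). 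Without this mechanism, or a substitute such as the Brylinski--Kashiwara argument, the $\mathcal{D}$-affineness of $X_G$ --- the heart of parts (ii) and (iii) --- is not established, so the proposal as written has a hole precisely at its central step; the remainder is correct.
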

 Suppose $Q$ is a parabolic subgroup of $G$ 
whose Lie algebra $\mathfrak{q}$ is \textbf{germane} with respect to the pair $(G,K)$ (i.e. $\mathfrak{q}$ satisfies the equivalent conditions of Proposition 4.74 \cite{Knapp-Vogan}). 
 Let $Q=LU$ be the 
Levi decomposition of $Q$. By Proposition 
4.74 \cite{Knapp-Vogan} and Proposition 4.78 \cite{Knapp-Vogan}
$L$ is a reductive subgroup stable under $\sigma$, and the restriction of $\theta$ to $L$ defines a Cartan involution for $L$. 
We are going to be interested in two extreme cases for the notion of germane. One is that the parabolic subgroup $Q$ is real, that is stable under $\sigma$. The other case is that 
$Q$ is stable under the Cartan involution $\theta$. From this point forward we suppose that $Q$ is in any of this two cases.

 Consider the fibration $X_G\rightarrow G/Q$. Its fiber over $Q$ 
can be identified with the flag variety $X_L$ of $L$. We denote the inclusion of that fiber in 
$X_G$ by 
\begin{align}\label{eq:mapvarieties}
\iota:X_L&\longrightarrow X_G.
\end{align}
Define $K_{L}=K\cap L$, since $K_{L}=L^{\theta}$, it acts with finitely many orbits over $X_L$. 
Finally write $\mathfrak{l}$ for the Lie algebra
of $L$, $\mathfrak{k}$ for the Lie algebra of $K$, and $\mathfrak{u}$ for the Lie algebra of $U$. 
Let 
\begin{align*}
\left({}^{u}\mathscr{R}_{(\mathfrak{q},K_{L})}^{(\mathfrak{g},K)}\right)^{j}:\mathcal{M}(\mathfrak{l},K_L)\rightarrow \mathcal{M}(\mathfrak{g},K) 
\end{align*}
be the induction functor introduced in Equations
(11.71c)-(11.71d) \cite{Knapp-Vogan}, and define 
\begin{align}\label{eq:cohomologicalinduction}
\mathscr{R}_{(\mathfrak{q},K_{L})}^{(\mathfrak{g},K)}:&~\mathcal{M}(\mathfrak{l},K_L)\rightarrow \mathcal{M}(\mathfrak{g},K)\\
&~V\mapsto 
\left\{
\begin{array}{ll}
\left({}^{u}\mathscr{R}_{(\mathfrak{q},K_{L})}^{(\mathfrak{g},K)}\right)^{\text{dim}(\mathfrak{u}\cap \mathfrak{k})}(V\otimes(\bigwedge^{\text{top}}\mathfrak{u}))& \text{if }Q \text{ is }\theta\text{-stable}\\
\left({}^{u}\mathscr{R}_{(\mathfrak{q},K_{L})}^{(\mathfrak{g},K)}\right)^{0}(V\otimes\mathbb{C}_{\rho})& \text{if }Q \text{ is real}.
\end{array} 
\right.\nonumber
\end{align}
When $Q$ is $\theta$-stable $\mathscr{R}_{(\mathfrak{q},K_{L})}^{(\mathfrak{g},K)}(V)$ corresponds to the Vogan-Zuckerman cohomological induction functor. In the case of $Q$ real, we have from Proposition 11.57 \cite{Knapp-Vogan},  
that $\mathscr{R}_{(\mathfrak{q},K_{L})}^{(\mathfrak{g},K)}(V)$ is the   
underlying $(\mathfrak{g},K)$-module of the parabolically induced representation $\text{Ind}_{Q(\sigma,\mathbb{R})}^{G(\sigma,\mathbb{R})}(\pi)$, with $\pi$ the admissible representation of $L(\sigma,\mathbb{R})$ with underlying $(\mathfrak{l},K_{L})$-module $V$. 
Since we 
are identifying representations with their underlying 
$(\mathfrak{g},K)$-modules, 
we are not going to make any distinction between parabolic induction and 
the functor in (\ref{eq:cohomologicalinduction})
for real $Q$. 

We now begin with the description of the geometric induction functor. The objective is to define a functor
$$I_{L}^{G}:D_{c}^{b}(X_L,K_{L})\rightarrow D_{c}^{b}(X_G,K),$$
that makes the following diagram commutative
\begin{align}\label{eq:cdiagramme1}
  \xymatrix{
    K\mathcal{M}(\mathfrak{g},K,I_{X_{G}}) \ar[r] & K(X_G,K)\\
    K\mathcal{M}(\mathfrak{l},K_{L},I_{X_{L}}) \ar[u]^{\mathscr{R}_{(\mathfrak{l},K_{L})}^{(\mathfrak{g},K)}}\ar[r] & K(X_L,K_{L})\ar[u]^{I_{L}^{G}}, }.
\end{align}
Here the horizontal arrows are given by Theorem \ref{theo:bbcorespondance}. The construction of $I_{L}^{G}$ is based on Bernstein's geometric functor. 
\begin{deftn}[Section 1.1 \cite{ViMir}]\label{deftn:geominduction0}
Suppose $Y$ is a smooth complex algebraic variety 
on which the algebraic group $G$ acts with finitely many orbits.
For any subgroup $H$ of $G$ we define the \textbf{Bernstein induction functor}
$$\Gamma_{H}^{G}:D_{c}^{b}(Y,H)\rightarrow D_{c}^{b}(Y,G)$$
as the right adjoint of the forgetful functor from $D_{c}^{b}(Y,G)$
to $D_{c}^{b}(Y,H)$ (see \cite{Bi} for the proof of its existence). More precisely, 
consider the diagram
\begin{align}\label{eq:diagraminduction1}
\xymatrix{
    G\times Y \ar[r]^\mu \ar[d]_p  & G\times_{H} Y \ar[d]^a \\
    Y  & Y
  }
\end{align}  
given by
$$  \xymatrix{
    (g,y) \ar[r] \ar[d]  & \overline{(g,y)} \ar[d] \\
    y  & g\cdot y
  }$$
where $G\times_{H} Y$ is the quotient of $G\times Y$ by the $H$-action $h\cdot(g,y)=(gh^{-1},hy)$.
From Theorem (A.2) (iii)  \cite{ViMir}, for $\mathcal{F}\in D_{c}^{b}(Y,H)$ there is a unique $\widetilde{\mathcal{F}}\in D_{c}^{b}(G\times_{H}Y,G)$ such that
$p^{\ast}\mathcal{F}=\mu^{\ast}\widetilde{\mathcal{F}}$. Bernstein's induction functor is defined as 
\begin{align*}
\Gamma_{H}^{G}\mathcal{F}=Ra_{\ast}\widetilde{\mathcal{F}},\quad \mathcal{F}\in D_{c}^{b}(Y,B),
\end{align*} 
where $R{a_{\ast}}:D_{c}^{b}(G\times_{H}Y,G)\rightarrow D_{c}^{b}(Y,G)$
is the right derived functor of the direct image functor defined by $a$.
Equivalently, one can also define $\Gamma_{H}^{G}:D_{c}^{b}(Y,H)\rightarrow D_{c}^{b}(Y,G)$
via the diagram
\begin{align}\label{eq:diagraminduction2}
 \xymatrix{
    G\times Y \ar[r]^\nu \ar[d]_b  & G/H\times Y \ar[d]^p \\
    Y  & Y
  }
\end{align}
given by
$$  \xymatrix{
    (g,y) \ar[r] \ar[d]  & {(gH,y)} \ar[d] \\
    g^{-1}\cdot y  &  y
  }$$
Then for $\mathcal{F}\in D_{c}^{b}(Y,H)$ we have
\begin{align*}
\Gamma_{H}^{G}\mathcal{F}=Rp_{\ast}{\mathcal{F}'},
\end{align*}
where $\mathcal{F}'$ is the unique element in $D_{c}^{b}(G/H\times Y,G)$ such that $b^{\ast}\mathcal{F}=\nu^{\ast}{\mathcal{F}'}$ (Theorem (A.2) (iii)  \cite{ViMir}) and $R{p_{\ast}}:D_{c}^{b}(G/{H}\times Y,G)\rightarrow D_{c}^{b}(Y,G)$ is the right derived functor of the direct image functor defined by $p$.
\end{deftn}


We can now give the definition of the geometric induction functor.
\begin{deftn}\label{deftn:geominduction}
Let $$\iota:X_L\longrightarrow X_G$$ be the inclusion defined in Equation (\ref{eq:mapvarieties}) and write
$R{\iota_{\ast}}:D_{c}^{b}(X_L,K_L)\rightarrow D_{c}^{b}(X_G,K_L)$
for the right derived functor of the direct image functor defined by $\iota$.
\textbf{ The geometric induction functor} $I_{L}^{G}:D_{c}^{b}(X_L,K_L)\rightarrow D_{c}^{b}(X_G,K)$ is defined as 
\begin{align}\label{eq:induction}
I_{L}^{G}(\mathcal{F})=\Gamma_{K_L}^{K}(R\iota_{\ast}\mathcal{F})=Ra_{\ast}(\widetilde{R\iota_{\ast}\mathcal{F}}),\quad \mathcal{F}\in D_{c}^{b}(X_L,K_L),
\end{align}
where $\widetilde{R\iota_{\ast}\mathcal{F}}$ is the unique element in 
$D_{c}^{b}(K\times_{K_L}X_G,K)$ satisfying
$p^{\ast}(R{\iota}_{\ast}\mathcal{F})=\mu^{\ast}(\widetilde{R\iota_{\ast}\mathcal{F}})$.
Equivalently, the geometric induction functor can also be defined as 
\begin{align*}
I_{L}^{G}(\mathcal{F})=\Gamma_{K_L}^{K}(R\iota_{\ast}\mathcal{F})=Rp_{\ast}(({R\iota_{\ast}\mathcal{F}})'),\quad \mathcal{F}\in D_{c}^{b}(X_L,K_L),
\end{align*}
where $({R\iota_{\ast}\mathcal{F}})'$ is the unique element in 
$D_{c}^{b}(K/{K_L}\times X_G,K)$ satisfying
$b^{\ast}(R{\iota}_{\ast}\mathcal{F})=\nu^{\ast}(({R\iota_{\ast}\mathcal{F}})')$.
\end{deftn}


Now that the geometric induction functor has been introduced, we explain  
how it affects characteristic cycles.
The characteristic cycle of a perverse sheaf 
 can be defined through Morse theory (see II.6.A \cite{GM}), 
 via the use of vanishing cycles (see Proposition 4.3.20 \cite{Dimca}), or 
through the Riemann-Hilbert correspondence. This latter option is the one that we follow in this 
article. 
 Let us sketch their definition. 
We do this in the same framework as that of the beginning of this section (i.e. $X$ is a smooth complex variety on which an algebraic group $H$ acts with finitely many orbits). 
  
To every $\mathcal{D}_X$-module $M$ 
 it is fairly easy to define 
a variety $\mathrm{Ch}(M)$ (see for example definition (2.1.2) \cite{Hotta})
which is a closed, involutive and conic analytic subvariety
in the complex cotangent bundle $T^{\ast}X$. Let $P\in \mathcal{P}(X,H)$ and 
write $M$ for the corresponding $\mathcal{D}_X$-module under the the Riemann-Hilbert correspondence.
We define 
$\mathrm{Ch}(P):=\mathrm{Ch}(M).$
The variety $\mathrm{Ch}(P)$ is called the \textbf{characteristic
variety} of $P$. 
More generally, let $\mathcal{F}\in D_{c}^{b}(X,H)$ and write $\mathcal{M}$
for the corresponding element in $D_{c}^{b}(\mathcal{D}_X,H)$ under the Riemann-Hilbert correspondence.  Then we define the characteristic variety of $\mathcal{F}$ as
$$\mathrm{Ch}(\mathcal{F}):=\bigcup_i \mathrm{Ch}(H^{i}\mathcal{M}).$$
Now, suppose $\mathcal{F}$ is constructible with respect to the stratification $\mathcal{S}=\{X_{j}\}_{j}$.
Then since $\mathrm{Ch}(\mathcal{F})$ is a closed, conic, analytic Lagrangian subset in the cotangent
bundle $T^{\ast}(X)$, each irreducible component $C$ of 
$\mathrm{Ch}(\mathcal{F})$ is of the form $C=\overline{T_{X_{j}}^{\ast}X}$ for some (unique) $j$ 
(see Remark 4.3.16 (ii) \cite{Dimca}).
When $\mathcal{F}\in D_{c}^{b}(X,H)$, 
the characteristic variety
$\mathrm{Ch}(\mathcal{F})$ is contained in 
$T_{H}^{\ast}(X)$, 
the conormal bundle to the $H$-action.
The
$H$\textbf{-components} (that is, the smallest $H$-invariant union of irreducible components) of 
$T_{H}^{\ast}(X)$ are the closures
$\overline{T_{S}^{\ast}X}$ of conormal bundles of $H$-orbits $S$ in $X$ (see Lemma 19.2 (b)\cite{ABV}), and $\mathrm{Ch}(\mathcal{F})$ is an union of these ${H}${-components}
(see Proposition 19.12 (c) \cite{ABV}).

The characteristic cycle is defined when 
we take 
the multiplicities of the components into account. Let $P$ be a perverse sheaf,
then we define the \textbf{characteristic cycle} $CC(P)$ of $P$ 
as the associated cycle of the characteristic variety $\mathrm{Ch}(P)$ (see Definition 2.2.2 \cite{Hotta}), that is
$$CC(P)=\sum_{C\in I(\mathrm{Ch}(P))}m_{C}(P)[C],$$
where $I(\mathrm{Ch}(P))$ denotes the set of the irreducible components of $\mathrm{Ch}(P)$ and
$m_{C}(P)$ corresponds to the length of the local ring $\mathcal{O}_{C,\mathrm{Ch}(P)}$ (see Section 1.5 \cite{fulton}). When $P\in \mathcal{P}(X,H)$ 
all irreducible component of $\mathrm{Ch}(P)$ in the $H$-component $\overline{T_{S}^{\ast}X}$ have the same length. We denote this length by $m_{S}(P)$. By Proposition 19.12 (c) \cite{ABV} the characteristic cycle of $P$ decomposes as 
$$CC(P)=\sum_{{H-\mathrm{orbits }~S~\mathrm{in}~X}}m_S(P)[\overline{T^{\ast}_{S}X}]$$
More generally, if $\mathcal{F}\in D_{c}^{b}(X)$, then
the characteristic cycle of $\mathcal{F}$ is defined as the formal sum 
$CC(\mathcal{F}):=\sum_{C\in I(\mathrm{Ch}(\mathcal{F}))}m_{C}(\mathcal{F})[C],$
where 
 for each irreducible component $C$ of $\mathrm{Ch}(\mathcal{F})$, $m_{C}(\mathcal{F})$ is an integer defined as in Equation (2.5) \cite{SV}.
 If we write $\mathscr{L}(X)$ to be the set of formal $\mathbb{Z}$-linear combinations 
of irreducible \textbf{analytic Lagrangian conic subvarieties} in the 
cotangent bundle $T^{\ast}X$, 
then 
the characteristic cycle can be seen as a map $CC:D_{c}^{b}(X)\rightarrow \mathscr{L}(X)$. In particular, if we define $\mathscr{L}(X,H)$  
to be the set of formal sums 
\begin{align}\label{eq:cyclespace}
\mathscr{L}(X,H)=\left\{\sum_{{H-\mathrm{orbits }~S~\mathrm{in}~X}}m_S(\mathcal{F})[\overline{T^{\ast}_{S}X}]:m_S\in\mathbb{Z}\right\},
\end{align}
then taking characteristic cycles defines a 
map
$CC:D_{c}^{b}(X,H)\rightarrow \mathscr{L}(X,H),$
and from Theorem 2.2.3 \cite{Hotta} (see also Proposition 19.12(e) \cite{ABV}) 
we obtain a $\mathbb{Z}$-linear map
$$CC:K(X,H)\rightarrow \mathscr{L}(X,H).$$
Let $\mathcal{F}\in D_{c}^{b}(X,H)$. Following the notation of \cite{ABV} (Definition 1.30  \cite{ABV} and Proposition 19.12  \cite{ABV}) for each $H$-orbit $S$ in $X$ we denote  
\begin{align}\label{eq:microlocalmultiplicity}
\chi_{S}^{mic}(\mathcal{F}):=m_{S}(\mathcal{F})
\end{align}
and call $\chi_{S}^{mic}(\mathcal{F})$ the \textbf{microlocal multiplicity along} $S$. Finally, for each formal sum 
$C=\sum_{i}m_{C_{i}}[C_{i}]\in \mathscr{L}(X)$
we define the support $|C|$ of $C$, as 
\begin{align}\label{eq:supportCycle}
|C|=\overline{\bigcup_{m_{C_i}\neq 0}C_{i}}.
\end{align}
Notice that by definition, for every $\mathcal{F}\in \mathcal{P}(X,H)$, 
$|CC(\mathcal{F})|=\mathrm{Ch}(\mathcal{F})$.\\ 

As a first result, let us describe how the functor $\Gamma_{H}^{G}$ affects the characteristic variety. 
\begin{lem}\label{lem:contention}
In the setting of Definition \ref{deftn:geominduction0}, let $\mathcal{F}\in D_{c}^{b}(Y,H)$. Then
\begin{align}\label{eq:contention}
G\cdot \mathrm{Ch}(\mathcal{F}) \subset \mathrm{Ch}(\Gamma_{H}^{G}\mathcal{F})
\subset \overline{G\cdot \mathrm{Ch}(\mathcal{F})} 
\end{align}
\end{lem}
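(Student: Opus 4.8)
The plan is to unwind the definition of $\Gamma_{H}^{G}$ from Definition \ref{deftn:geominduction0} and follow the characteristic variety through the elementary operations out of which it is built. Using the second realization (diagram (\ref{eq:diagraminduction2})), $\Gamma_{H}^{G}\mathcal{F}=Rp_{\ast}\mathcal{F}'$, where $p:G/H\times Y\to Y$ is the projection, $b:G\times Y\to Y$ is the smooth submersion $(g,y)\mapsto g^{-1}y$, $\nu:G\times Y\to G/H\times Y$ is the smooth surjection $(g,y)\mapsto(gH,y)$, and $\mathcal{F}'$ is the unique object with $\nu^{\ast}\mathcal{F}'=b^{\ast}\mathcal{F}$. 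First I would treat the ``twisted pullback'' $\mathcal{F}\mapsto\mathcal{F}'$: for a smooth morphism the characteristic variety of a pullback is given by the clean inverse-image operation on conic Lagrangians, so $\nu^{\ast}\mathcal{F}'=b^{\ast}\mathcal{F}$ forces $\nu^{\circ}\mathrm{Ch}(\mathcal{F}')=b^{\circ}\mathrm{Ch}(\mathcal{F})$; since $\nu$ is a smooth surjection this determines $\mathrm{Ch}(\mathcal{F}')$, and a direct computation with the cotangent bundles of $G\times Y$, $G/H\times Y$ and $Y$ — using the $G$-action — relates it explicitly to $\mathrm{Ch}(\mathcal{F})$ and the $G$-action on $T^{\ast}Y$. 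In particular $\mathcal{F}'$, hence $\Gamma_{H}^{G}\mathcal{F}=Rp_{\ast}\mathcal{F}'$, is $G$-equivariant, so by Theorem \ref{theo:defcc} the set $\mathrm{Ch}(\Gamma_{H}^{G}\mathcal{F})$ is already a closed, $G$-stable union of closures of conormal bundles to $G$-orbits of $Y$; this pins down the ambient object that the two containments in (\ref{eq:contention}) must describe.

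The remaining point is the direct image $Rp_{\ast}$. Since $G/H$ need not be complete, I would factor $p$ as $G/H\times Y \hookrightarrow \overline{G/H}\times Y \to Y$, with $\overline{G/H}$ a $G$-equivariant completion of $G/H$, the first map (call it $j$) an open embedding and the second ($\bar{p}$) proper, so $Rp_{\ast}=R\bar{p}_{\ast}\circ Rj_{\ast}$. For the proper morphism $\bar{p}$ the Kashiwara--Schapira estimate for proper direct images bounds $\mathrm{Ch}(R\bar{p}_{\ast}(-))$ by the image of $\mathrm{Ch}(-)$ under the cotangent correspondence attached to $\bar{p}$, with equality over the locus where $\bar{p}$ is non-characteristic. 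For the open embedding $j$ the Schmid--Vilonen description of open direct images computes $\mathrm{Ch}(Rj_{\ast}\mathcal{F}')$ from $\mathrm{Ch}(\mathcal{F}')$ by adjoining only \emph{limiting} conormal directions over the boundary $(\overline{G/H}\setminus G/H)\times Y$, so it agrees with the clean pullback of $\mathrm{Ch}(\mathcal{F}')$ over the interior and remains inside its closure. Composing these, and inserting the description of $\mathrm{Ch}(\mathcal{F}')$ from the first step, the contribution coming from the interior of $\overline{G/H}\times Y$ works out to be exactly $G\cdot\mathrm{Ch}(\mathcal{F})$, giving the left containment $G\cdot\mathrm{Ch}(\mathcal{F})\subseteq\mathrm{Ch}(\Gamma_{H}^{G}\mathcal{F})$, while the boundary contributions of $Rj_{\ast}$ together with the non-transverse locus of $\bar{p}$ can only add components of $\overline{G\cdot\mathrm{Ch}(\mathcal{F})}$, giving the right containment.

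The main obstacle, I expect, is twofold. Establishing the \emph{lower} containment rigorously means showing no component of the interior contribution is lost under the proper direct image $R\bar{p}_{\ast}$; this requires either a transversality/non-characteristic argument for $\bar{p}$ along the relevant stratum, or a local model near the slice $\{eH\}\times Y\subseteq G/H\times Y$ along which $\mathcal{F}'$ restricts back to $\mathcal{F}$, and is precisely where one uses that $\mathcal{F}$ is recoverable from $\Gamma_{H}^{G}\mathcal{F}$. Controlling the \emph{upper} containment — that the boundary directions produced by $Rj_{\ast}\mathcal{F}'$ never leave $\overline{G\cdot\mathrm{Ch}(\mathcal{F})}$ — is exactly where the Schmid--Vilonen limit formula is indispensable, and it is the reason a closure, rather than an equality, appears on the right of (\ref{eq:contention}). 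It may be cleaner to run the argument on the $\mathcal{D}$-module side via the Riemann--Hilbert correspondence (Theorem \ref{theo:rhcorrespondence}), where $\Gamma_{H}^{G}$ is a composite of a flat pullback and a push-forward and the characteristic-variety bookkeeping is more transparent; alternatively, in the situation actually needed the relevant projection factors through affine-bundle projections, for which the characteristic variety of a pullback behaves exactly, isolating the delicate analysis in the proper and closed pieces.
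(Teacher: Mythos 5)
Your overall route -- realize $\Gamma_{H}^{G}\mathcal{F}=Rp_{\ast}\mathcal{F}'$ via diagram (\ref{eq:diagraminduction2}), compute $\mathrm{Ch}(\mathcal{F}')$ from $\nu^{\ast}\mathcal{F}'=b^{\ast}\mathcal{F}$ by smooth pullback, and factor $p$ through $\overline{G/H}\times Y$ into an open embedding followed by a proper projection -- is the same skeleton the paper uses, and the smooth-pullback computation (whose upshot is that the $T^{\ast}Y$-components of $\mathrm{Ch}(\mathcal{F}')$ over $(gH,y)$ lie in $g\cdot\mathrm{Ch}(\mathcal{F})_{g^{-1}y}$) is exactly the content of Proposition B1 and the proof of Lemma 1.2 of \cite{ViMir} that the paper invokes. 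But at both of the decisive points your argument has a gap. For the right-hand containment, your justification rests on the assertion that the open direct image $Rj_{\ast}$ only adds limiting conormal directions and so ``remains inside its closure''; read literally ($\mathrm{Ch}(Rj_{\ast}\mathcal{F}')\subset\overline{\mathrm{Ch}(\mathcal{F}')}$) this is false -- for $j:\mathbb{C}^{\times}\hookrightarrow\mathbb{C}$ and the constant sheaf, $\mathrm{Ch}(Rj_{\ast}\mathbb{C})$ contains the conormal of the origin, which is not in the closure of the zero section over $\mathbb{C}^{\times}$. The Schmid--Vilonen limit $\lim_{s\to 0}\bigl(C+s\,d\log f\bigr)$ genuinely creates new Lagrangian components over the boundary, so the claim that after applying $\bar{p}_{\ast}$ these components ``can only add components of $\overline{G\cdot\mathrm{Ch}(\mathcal{F})}$'' is precisely the nontrivial content of the right inclusion; you name it as the place where the limit formula is indispensable but never supply the argument. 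The paper does not prove it either -- it quotes it verbatim as Lemma 1.2 of \cite{ViMir} -- so to make your version self-contained you would have to run the limit analysis (e.g.\ with $f$ pulled back from $\overline{G/H}$, tracking that the $T^{\ast}Y$-components of the shifted cycles stay in $G\cdot\mathrm{Ch}(\mathcal{F})$ before passing to the limit), or simply cite \cite{ViMir}.

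For the left-hand containment you explicitly defer to a transversality/non-characteristic argument for $\bar{p}$, or a local-model argument, as the ``main obstacle''; this is the second gap, and it is avoidable. The paper's proof needs no transversality at all: since $i:G/H\times Y\to\overline{G/H}\times Y$ is an open embedding, $\mathcal{F}'$ is recovered by restriction, so trivially $\mathrm{Ch}(\mathcal{F}')\subset\mathrm{Ch}(Ri_{\ast}\mathcal{F}')$; by Lemma B2 of \cite{ViMir} the proper projection gives the \emph{equality} $\mathrm{Ch}(Rp_{\ast}\mathcal{F}')=pr(\mathrm{Ch}(Ri_{\ast}\mathcal{F}'))$, whence $pr(\mathrm{Ch}(\mathcal{F}'))\subset\mathrm{Ch}(\Gamma_{H}^{G}\mathcal{F})$; and $pr(\mathrm{Ch}(\mathcal{F}'))=pr(\mathrm{Ch}(b^{\ast}\mathcal{F}))=G\cdot\mathrm{Ch}(\mathcal{F})$ by Proposition B1 and the proof of Lemma 1.2 of \cite{ViMir}. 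The missing idea in your sketch is this monotonicity-plus-exact-projection argument: with it, ``no component of the interior contribution is lost'' comes for free, and the Kashiwara--Schapira upper estimate (which by itself only bounds, and whose possible cancellations your worry about) never has to be sharpened by a transversality hypothesis.
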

\begin{proof}
The right inclusion in (\ref{eq:contention})
is Lemma (1.2) \cite{ViMir}. 
For the left inclusion we consider the definition of $\Gamma_{H}^{G}$ via Diagram (\ref{eq:diagraminduction2}). Let $\mathcal{F}'\in D_{c}^{b}(G/H\times Y,G)$ be such that 
$b^{\ast}\mathcal{F}=\nu^{\ast}{\mathcal{F}'}$.
We use the description of Ch$(Rp_{\ast}\mathcal{F}')$ 
given in Proposition B2 \cite{ViMir}. 
Let $\overline{G/H}$ be a smooth compactification of $G/H$.
Then $p:G/H\times Y\rightarrow Y$ factors as 
$p:G/H \times Y\xrightarrow{i} \overline{G/H}\times Y\xrightarrow{q} Y$,  
and as explained in the proof of Lemma B2 \cite{ViMir} we have
$$\mathrm{Ch}(Rp_{\ast}\mathcal{F}')=pr(\mathrm{Ch}(R{i}_{\ast}\mathcal{F}')),$$ where 
$pr$ denote the projection $pr:T^{\ast}(G/H\times Y)\rightarrow T^{\ast}Y$.
Since $i:G/H \times Y\rightarrow \overline{G/H}\times Y$ is an open embedding
$$\mathrm{Ch}(\mathcal{F}')\subset \mathrm{Ch}(R{i}_{\ast}\mathcal{F}').$$
Therefore
$$pr( \mathrm{Ch}(\mathcal{F}'))\subset \mathrm{Ch}(Rp_{\ast}\mathcal{F}')=\mathrm{Ch}(\Gamma_{H}^{G}\mathcal{F}).$$
Finally, by Proposition B1 \cite{ViMir}, $pr( \mathrm{Ch}(\mathcal{F}'))=pr( \mathrm{Ch}(b^{\ast}\mathcal{F}))$ and 
from the proof of Lemma (1.2) \cite{ViMir} we obtain $pr( \mathrm{Ch}(b^{\ast}\mathcal{F}))=
G\cdot \mathrm{Ch}(\mathcal{F})$. Equation (\ref{eq:contention}) follows. 
\end{proof}
Let us start now with the study of the relation between characteristic cycles 
and the geometric induction functor.
Our main result is a formula for 
$CC(I_{L}^{G}(\mathcal{F}))$ 
in terms of the characteristic cycle 
of $\mathcal{F}$. 
From Definition \ref{deftn:geominduction} 
it is clear that in order to compute  $CC(I_{L}^{G}(\mathcal{F}))$ 
it will be necessary first to describe the behaviour of 
characteristic cycles under 
taking the direct images 
$R\iota_{\ast}\text{ and }Ra_{\ast},$ and second 
to reduce the characterization of $CC(\widetilde{R\iota_{\ast}\mathcal{F}})$
to the one of $CC({\mathcal{F}})$. The second step will be done in Proposition 
\ref{prop:reduction2} and Corollary \ref{cor:reductionstep2}
below. Another option to compute $CC(I_{L}^{G}(\mathcal{F}))$ is 
to make use of $Rp_{\ast}$ instead of $Ra_{\ast}$, 
and to reduce the characterization of $CC(({R\iota_{\ast}\mathcal{F}})')$
to the one of $CC({\mathcal{F}})$, but since 
Proposition 7.14 \cite{ABV} gives us a description of $CC(\widetilde{R\iota_{\ast}\mathcal{F}})$, in this article we work principally with the definition of $I_{L}^{G}$ via Diagram (\ref{eq:diagraminduction1}).

To deal with the direct images 
$R\iota_{\ast}$
and $Ra_{\ast}$, we describe the pushforward of cycles 
$$\iota_{\ast}:\mathscr{L}(X_{L},K_L)\rightarrow \mathscr{L}(X_{G},K_L)\quad\text{ and }\quad a_{\ast}:\mathscr{L}(K\times_{K_L}X_{G},K)\rightarrow \mathscr{L}(X_{G},K)$$
that make the diagrams
\begin{align}\label{eq:diagramcycle1}
  \xymatrix{
    {D}_{c}^{b}(X_{G},K_L)\ar[r]^{CC} & \mathscr{L}(X_{G},K_L)\\
    {D}_{c}^{b}(X_{L},K_L)\ar[u]^{R\iota_{\ast}}\ar[r]^{CC} & 
    \mathscr{L}(X_{L},K_L)\ar[u]^{\iota_{\ast}} 
     }
    ~\quad\text{ and }
    \xymatrix{
    {D}_{c}^{b}(X_{G},K)\ar[r]^{CC} & \mathscr{L}(X_{G},K)\\
    {D}_{c}^{b}(K\times_{K_L}X_{G},K)\ar[u]^{Ra_{\ast}}\ar[r]^{CC} & \mathscr{L}(K\times_{K_L}X_{G},K)\ar[u]^{a_{\ast}} 
     }
\end{align}
commutative. 
This will be done in a more general context than the one of the functions $\iota:X_L\rightarrow X_G$ 
and $a:K\times_{K_L}X_{G}\rightarrow X_{G}$.
Consider a morphism $F:X\rightarrow Y$ between two 
smooth algebraic varieties. We work initially in the derived categories
and restrict our attention to the  equivariant subcategories when
working with the equivariant maps $a$ and $\iota$. 
 The definition of $F_{\ast}:\mathscr{L}(X)\rightarrow \mathscr{L}(Y)$ and proof 
of the commutativity of the diagram
 \begin{align*}
  \xymatrix{
    D_{c}^{b}(Y)\ar[r]^{CC} & \mathscr{L}(Y)\\
    D_{c}^{b}(X)\ar[u]^{RF_{\ast}}\ar[r]^{CC} & \mathscr{L}(X)\ar[u]^{F_{\ast}} 
     }
\end{align*}
 is due principally to the work of Kashiwara-Schapira \cite{Kashiwara-Schapira} and Schmid-Vilonen \cite{SV}.
We give a short review of their work. 
We begin by noticing that Schmid-Vilonen work in the derived category of sheaves having cohomology sheaves constructible with respect to a \textbf{\textit{semi-algebraic}} stratification. 
We consider then the derived categories introduced at the beginning of this section as subcategories of 
this larger category, and restrict their result to our framework when working with an algebraic map
$F:X\rightarrow Y$. By abuse of notation 
we write, as in the paragraph previous to Equation (\ref{eq:cyclespace}), $\mathscr{L}(X)$ (respectively $\mathscr{L}(Y)$) for the set of \textbf{\textit{semi-algebraic Lagrangian cycles}} in $T^{\ast}X$ (respectively $T^{\ast}Y$).

The definition of $F_{\ast}$ for an arbitrary algebraic map reduces to the case of proper maps and open embeddings. We begin by describing $F_{\ast}$ in the 
case that $F$ is a proper map.
Consider the diagram
\begin{align}\label{eq:dF}
T^{\ast}X\xleftarrow{dF} X\times_{Y}T^{\ast} Y\xrightarrow{\tau} T^{\ast}Y,  
\end{align}
where $\tau:X\times_{Y}T^{\ast} Y\rightarrow T^{\ast}Y$ is the projection on the second coordinate and for all $(x,(F(x),\lambda))\in X\times_Y T^{\ast}Y$ we have $dF(x,(F(x),\lambda))=(x,\lambda\circ dF_{x})$.
The assumption of $F$ being proper implies that $\tau$ is proper. Hence
we can, as in Section 1.4 \cite{fulton}, define a pushforward 
of cycles $\tau_{\ast}:\mathscr{L}(X\times_{Y}T^{\ast}Y)\rightarrow\mathscr{L}(T^{\ast}Y)$.
Moreover, intersection theory (see Section 6.1, 6.2 and 8.1 \cite{fulton}) 
allows us to construct a 
pullback of cycles $dF^{\ast}:\mathscr{L}(T^{\ast}X)\rightarrow\mathscr{L}(X\times_{Y}T^{\ast}Y)$. 
The map $F_{\ast}:\mathscr{L}(X)\rightarrow \mathscr{L}(Y)$ 
is then defined as the composition of these two functions (see Equation 2.16 \cite{SV})
\begin{align}\label{eq:Gysin}
F_{\ast}:=\tau_{\ast}\circ dF^{\ast}.
\end{align}
The following result due to Kashiwara-Schapira  
relates $F_{\ast}$ to the right derived functor $RF_{\ast}:D_{c}^{b}(X)\rightarrow D_{c}^{b}(Y)$
and proves the commutativity of (\ref{eq:diagramcycle1}) in the case of proper maps.
\begin{prop}[Proposition 9.4.2 \cite{Kashiwara-Schapira}]\label{prop:KS}
Let $F:X\rightarrow Y$ be a proper map. Then for all $\mathcal{F}\in D_{c}^{b}(X)$
$$CC(RF_{\ast}\mathcal{F})=F_{\ast}CC(\mathcal{F}).$$
\end{prop}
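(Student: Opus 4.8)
The plan is to follow the argument of Kashiwara and Schapira, so that Proposition \ref{prop:KS} is really their Proposition 9.4.2 \cite{Kashiwara-Schapira}; here I only outline the structure of the proof. First I would reduce to the case where $\mathcal{F}$ is a single sheaf rather than a complex: since $RF_{\ast}$ is triangulated and $CC$ is additive on distinguished triangles (the analogue over $D_{c}^{b}$ of the additivity in Proposition \ref{prop:additive}), it suffices to treat each cohomology sheaf, and then, by dévissage along a stratification adapted to $F$, one may even assume $\mathcal{F}$ is a shifted constant sheaf on the closure of a stratum. Next, one observes that the two cycles in the claimed identity are supported on the same set: by the microsupport estimate for proper direct images (Proposition 5.4.4 \cite{Kashiwara-Schapira}) one has $\mathrm{SS}(RF_{\ast}\mathcal{F})\subset \tau(dF^{-1}(\mathrm{SS}(\mathcal{F})))$, and $\tau(dF^{-1}(\mathrm{SS}(\mathcal{F})))=|F_{\ast}CC(\mathcal{F})|$ by the very construction of $F_{\ast}=\tau_{\ast}\circ dF^{\ast}$ in (\ref{eq:Gysin}). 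Hence it is enough to compare, for each irreducible component $\Lambda\subset T^{\ast}Y$ of this common support, the multiplicity of $CC(RF_{\ast}\mathcal{F})$ along $\Lambda$ with that of $F_{\ast}CC(\mathcal{F})$.

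The heart of the matter is then a local computation. Fix a smooth point $(y_{0},\eta_{0})$ of such a component $\Lambda$, lying on no other component of $\mathrm{SS}(RF_{\ast}\mathcal{F})$, and choose a real $C^{1}$ function $\phi$ near $y_{0}$ with $\phi(y_{0})=0$, $d\phi(y_{0})=\eta_{0}$ and Hessian in general position relative to $\Lambda$; the multiplicity of $CC(RF_{\ast}\mathcal{F})$ along $\Lambda$ is then read off, up to a sign and a shift by $\dim Y$, from the local Morse group $\left(R\Gamma_{\{\phi\ge 0\}}RF_{\ast}\mathcal{F}\right)_{y_{0}}$ (this is the microlocal Morse description of the characteristic cycle, \cite{Kashiwara-Schapira} \S 9.5, equivalently the index picture of \cite{SV} \S 2). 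Because $F$ is proper, proper base change identifies this group with $R\Gamma\!\left(F^{-1}(y_{0}),\,\left(R\Gamma_{\{\phi\circ F\ge 0\}}\mathcal{F}\right)|_{F^{-1}(y_{0})}\right)$, so that the local Morse data on $Y$ is obtained by integrating over the fibre $F^{-1}(y_{0})$ the local Morse data of $\mathcal{F}$ for the pulled-back function $\phi\circ F$, whose differential along $F^{-1}(y_{0})$ lies precisely in $dF^{-1}\{(y_{0},\eta_{0})\}$. On the other side, $dF^{\ast}CC(\mathcal{F})$ is the refined Gysin pullback of \cite{fulton} along $dF\colon X\times_{Y}T^{\ast}Y\to T^{\ast}X$, and $\tau_{\ast}$ then pushes the resulting cycle forward to $T^{\ast}Y$; concretely this amounts to intersecting $CC(\mathcal{F})$ with the fibre $X\times_{Y}\{(y_{0},\eta_{0})\}$ and summing the intersection multiplicities over the points of that fibre, which for generic $(y_{0},\eta_{0})$ is a finite subset of $F^{-1}(y_{0})$. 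Matching these two fibrewise sums yields the equality of multiplicities, hence the Proposition.

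The step I expect to be the main obstacle is exactly this last matching: showing that the intersection multiplicities produced by Fulton's construction for $dF^{\ast}$ coincide with the analytic (Morse-theoretic) local multiplicities entering the index description of $CC$, and that $\tau_{\ast}$ corresponds to cohomological integration along the fibre, all with the correct signs, orientations and degree shifts. This is a genuinely local question, but it forces a careful choice of $(y_{0},\eta_{0})$ and $\phi$ guaranteeing transversality, so that $F^{-1}(y_{0})$ meets the relevant stratum in finitely many reduced points and the two notions of multiplicity become literally comparable. An alternative organisation, which I would fall back on if the uniform index argument becomes awkward, is to factor $F$ through its graph $X\hookrightarrow X\times Y$ (a closed immersion, for which the statement is essentially formal, the conormal being pushed forward with multiplicities preserved) followed by the second projection restricted to the image; the remaining case of a projection with proper, e.g. compact, fibres is then handled by the index theorem and integration along the fibre, which is again the content of \cite{Kashiwara-Schapira} \S 9.
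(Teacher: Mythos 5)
The paper never proves this statement: Proposition \ref{prop:KS} is imported verbatim from Kashiwara--Schapira (it is their Proposition 9.4.2 in \cite{Kashiwara-Schapira}) and is used as a black box in the construction of $F_{\ast}$ and in the proofs of Corollary \ref{cor:reductionstep2} and Proposition \ref{prop:ccLG}. So there is no in-paper argument to measure your proposal against; the only meaningful comparison is with the source. On that score your outline is the standard microlocal one and is structurally sound: additivity of $CC$ on distinguished triangles reduces to (shifted) constructible sheaves, the estimate $\mathrm{SS}(RF_{\ast}\mathcal{F})\subset \tau(dF^{-1}(\mathrm{SS}(\mathcal{F})))$ for $F$ proper is indeed Proposition 5.4.4 of \cite{Kashiwara-Schapira}, the identification of multiplicities with local Morse groups via a test function $\phi$ with $d\phi(y_{0})=\eta_{0}$ generic is the index description used both in \cite{Kashiwara-Schapira} and in Section 2 of \cite{SV}, and the proper base change identity $\bigl(R\Gamma_{\{\phi\ge 0\}}RF_{\ast}\mathcal{F}\bigr)_{y_{0}}\cong R\Gamma\bigl(F^{-1}(y_{0}),(R\Gamma_{\{\phi\circ F\ge 0\}}\mathcal{F})|_{F^{-1}(y_{0})}\bigr)$ is correct. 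Your fallback factorization through the graph embedding followed by a projection is also exactly how \cite{Kashiwara-Schapira} and \cite{SV} (and this paper, in the definition (\ref{eq:svfunctor}) of $F_{\ast}$ for arbitrary maps) organize such arguments.

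Nevertheless, as a proof the proposal has a genuine gap, and you name it yourself: the identification of the Fulton-style intersection multiplicities produced by $dF^{\ast}$, followed by $\tau_{\ast}$, with the Morse-theoretic indices obtained by integrating over $F^{-1}(y_{0})$ --- with the correct signs, shifts and transversality choices --- is not an auxiliary verification, it \emph{is} the content of Proposition 9.4.2, and it is precisely the part you defer. Until that comparison is carried out (in \cite{Kashiwara-Schapira} it rests on their index theorem and the calculus of Lagrangian cycles developed in Chapter 9), the argument establishes only that both sides are cycles carried by a common Lagrangian, not that they are equal. Two smaller cautions: the asserted equality $\tau(dF^{-1}(\mathrm{SS}(\mathcal{F})))=|F_{\ast}CC(\mathcal{F})|$ is in general only an inclusion $|F_{\ast}CC(\mathcal{F})|\subset\tau(dF^{-1}(\mathrm{SS}(\mathcal{F})))$, since components may cancel in the pushforward --- harmless if you compare multiplicities (possibly zero) component by component, but it should not be stated as an identity of supports; and the dévissage to shifted constant sheaves on closures of strata requires a stratification adapted simultaneously to $\mathcal{F}$ and to $F$, which is available here because $F$ is algebraic but should be invoked explicitly.
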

As explained at the end of Section 3 \cite{SV},   
we can give a more explicit description of $F_{\ast}CC(\mathcal{F})$  
by choosing a transverse family of cycles with limit equal to
$CC(\mathcal{F})$. 
More precisely, suppose $C\in \mathscr{L}(X)$ is transverse to the map 
$dF: X\times_{Y} T^{\ast}Y \rightarrow T^{\ast}X$. Then the geometric inverse image $dF^{-1}(C)$ of $C$ is well-defined as a cycle in $X\times_{Y} T^{\ast}Y$ and we have
$$dF^{\ast}(C)=dF^{-1}(C).$$ 
Consequently, $\tau_{\ast}dF^{-1}(C)$ is a well-defined cycle in $\mathscr{L}(Y)$.
Now, by Lemma 3.26 \cite{SV} we can choose 
for every cycle $C_{0}\in \mathscr{L}(X)$
a family $\{C_{s}\}_{s\in (0,b)}\subset \mathscr{L}(X)$ 
such that the map $dF: X\times_{Y} T^{\ast}Y \rightarrow T^{\ast}X$ is transverse to supp$(C_{s})\subset T^{\ast}X$; for every $s\in (0,b)$ and 
$$C_0=\lim_{s\rightarrow 0}C_{s}.$$
For more details about the construction of this family of cycles, 
see Equation (3.10) and (3.11) \cite{SV}. Equations (3.12-3.16) \cite{SV} provide 
 the notion of limit of a family of cycles. 
Schmid and Vilonen prove
\begin{prop}[Proposition 3.27 of \cite{SV}]\label{prop:familylimit}
Suppose $F:X\rightarrow Y$ is proper. Let $C_{0}\in \mathscr{L}(X)$. Choose a family $\{C_{s}\}_{s\in (0,b)}\subset \mathscr{L}(X)$ 
with limit $C_{0}$ and such that 
$dF: X\times_{Y} T^{\ast}Y \rightarrow T^{\ast}X$ is transverse to the support $|C_{s}|\subset T^{\ast}X$, for every $s\in (0,b)$.
Then
\begin{align}\label{eq:familylimit}
F_{\ast}(C_{0})=\lim_{s\rightarrow 0}\tau_{\ast}dF^{-1}(C_{s}).
\end{align} 
\end{prop}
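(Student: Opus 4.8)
The plan is to read the claimed identity as a continuity statement for $F_\ast$ with respect to the limit operation on families of cycles. Recall from (\ref{eq:Gysin}) that $F_\ast=\tau_\ast\circ dF^\ast$. By hypothesis the map $dF\colon X\times_Y T^\ast Y\to T^\ast X$ is transverse to the support $|C_s|$ for every $s\in(0,b)$, so the geometric inverse image $dF^{-1}(C_s)$ is well-defined and, as recalled just before the statement, $dF^\ast(C_s)=dF^{-1}(C_s)$. Hence $\tau_\ast dF^{-1}(C_s)=F_\ast(C_s)$ for each $s$, and the proposition becomes equivalent to the assertion that
\begin{align}\label{eq:plancontinuity}
F_\ast\bigl(\lim_{s\to0}C_s\bigr)=\lim_{s\to0}F_\ast(C_s),
\end{align}
where it is part of the claim that the limit on the right exists.

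To establish (\ref{eq:plancontinuity}) I would handle the two factors of $F_\ast=\tau_\ast\circ dF^\ast$ in turn. First, $\tau$ is proper, being the base change of the proper map $F$ along $T^\ast Y\to Y$; and the proper pushforward of semi-algebraic Lagrangian cycles commutes with limits of families. This is a general property of the limit construction of Equations (3.12)--(3.16) \cite{SV}: passing to the associated integration currents, proper pushforward is continuous for the relevant convergence, so $\tau_\ast\bigl(\lim_s D_s\bigr)=\lim_s\tau_\ast(D_s)$ for any family $\{D_s\}$ in $\mathscr{L}(X\times_Y T^\ast Y)$. It therefore remains to prove that the Gysin pullback commutes with the limit, that is
\begin{align}\label{eq:planGysin}
dF^\ast\bigl(\lim_{s\to0}C_s\bigr)=\lim_{s\to0}dF^\ast(C_s)=\lim_{s\to0}dF^{-1}(C_s).
\end{align}

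Identity (\ref{eq:planGysin}) is the real content: the refined intersection-theoretic pullback of the possibly non-transverse limit cycle $C_0$ must coincide with the limit of the honest geometric inverse images of its transverse approximations $C_s$. I would prove it by unwinding the definition of $dF^\ast$ as the refined Gysin homomorphism attached to the vector-bundle morphism $dF$ over $X$ (the transpose of $TX\to F^\ast TY$): this pullback is itself manufactured from a deformation, parametrised by an auxiliary variable $t$, degenerating the intersection of $C_0$ with the image of $dF$ to its normal cone. One is thus confronted with a double limit in $(s,t)$, and the task is to justify evaluating it along the diagonal. Here one uses Lemma 3.26 \cite{SV} to produce the transverse family in the first place, and the closedness of the space of semi-algebraic Lagrangian cycles under the limit operation to control the bookkeeping. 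The exchange of limits is legitimate because for each fixed $s>0$ transversality collapses the $t$-deformation entirely ($dF^\ast C_s=dF^{-1}(C_s)$, no correction needed), while at $s=0$ the $t$-deformation is exactly what defines $dF^\ast C_0$; a uniform transversality estimate along the family then forces $\lim_{s\to0}dF^{-1}(C_s)=dF^\ast C_0$. I expect this diagonal/exchange-of-limits step --- reconciling the algebraic specialization that defines the Gysin map with the analytic notion of limit of cycles of \cite{SV} --- to be the main obstacle; the remainder is the formal chaining $F_\ast(C_0)=\tau_\ast dF^\ast(C_0)=\tau_\ast\bigl(\lim_{s\to0}dF^{-1}(C_s)\bigr)=\lim_{s\to0}\tau_\ast dF^{-1}(C_s)$, which also yields existence of the limit on the right.
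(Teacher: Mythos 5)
The paper never proves this statement: it is imported verbatim as Proposition 3.27 of \cite{SV}, with the surrounding text only recalling the ingredients (the transverse families of Lemma 3.26 \cite{SV}, the notion of limit from Equations (3.12)--(3.16) \cite{SV}). So the only question is whether your sketch stands on its own, and it does not. Your reduction is fine as far as it goes: transversality gives $dF^{\ast}(C_s)=dF^{-1}(C_s)$, hence $\tau_{\ast}dF^{-1}(C_s)=F_{\ast}(C_s)$, so the proposition amounts to saying that $F_{\ast}$ commutes with the limit of the family. But that merely restates the problem, and the whole content then sits in your claim that $dF^{\ast}\bigl(\lim_{s\to 0}C_s\bigr)=\lim_{s\to 0}dF^{-1}(C_s)$. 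There your argument is not a proof: you justify the exchange of the $s$-limit with the $t$-deformation defining the Gysin map by ``a uniform transversality estimate along the family,'' which is neither among the hypotheses nor available in the intended applications. Transversality is assumed only for each fixed $s\in(0,b)$; in exactly the situations this proposition is used for later in Section 4 (families such as $CC(\mathcal{F})+s\,d\log f$), the transversality degenerates as $s\to 0$ and the limit cycle $C_0$ is precisely the non-transverse object, so no uniform estimate can be extracted, and the diagonal/exchange-of-limits step is the point at issue rather than a consequence of it.

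The missing idea in the Schmid--Vilonen framework is that one does not treat $\lim_{s}$ and the Gysin specialization as two independent limits to be interchanged: the family $\{C_s\}$ is encoded as a single semi-algebraic Lagrangian cycle over the parameter interval, the limit of (3.12)--(3.16) \cite{SV} is by definition a boundary/slice of that total cycle, and the identity is obtained by applying the proper pushforward (properness of $\tau$, i.e.\ of $F$, on the relevant supports) to the total family cycle and identifying the boundary of the pushforward with the pushforward of the boundary. Your auxiliary step --- that $\tau_{\ast}$ commutes with limits because proper pushforward of integration currents is continuous --- also needs this machinery rather than a general continuity principle: it is a lemma about the specific limit construction, requiring control of the support of the whole family, and part of the assertion is that the right-hand limit exists at all, which your chaining assumes rather than establishes. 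As it stands the proposal correctly isolates the difficulty but does not close it; one still has to either cite \cite{SV} (as the paper does) or reproduce the argument of Section 3 of \cite{SV}.
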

Having described $F_{\ast}$ when $F$ is proper, we explain now how to define 
$F_{\ast}:\mathscr{L}(X)\rightarrow \mathscr{L}(Y)$
in the case when $F:X\rightarrow Y$ is an open embedding. 
We follow
Chapter 4 \cite{SV}.  
We start by choosing a real valued, semialgebraic $C^{1}$-function $f:X\rightarrow \mathbb{R}$, such that
\begin{enumerate}
\item the boundary $\partial X$ is the zero set of $f$,
\item $f$ is positive on $X$.
\end{enumerate}
For more details on the existence of this map, see Equation (4.1) \cite{SV} and 
Proposition I.4.5 \cite{Shiota}.
Suppose $C\in \mathscr{L}(X)$, and for each $s>0$ define 
$C + sd\log f$ as the cycle of $X$ equal to the image of $C$ under the automorphism of 
$T^{\ast}X$ defined by
\begin{align}\label{eq:open1}
(x,\xi)\mapsto \left(x,\xi+s\frac{df_{x}}{f(x)}\right).
\end{align}
Theorem 4.2 \cite{SV} relates the limit of the family of cycles $\{C+s d\log f\}_{s>0}$
(see page 468 \cite{SV} for the proof of why this family defines a family of cycles)
to the direct image $RF_{\ast}: D_c^{b}(X)\rightarrow D_c^{b}(Y)$.
We state it here as
\begin{prop}\label{prop:openembeding}
Suppose $F:X\rightarrow Y$ is an open embedding. Let $\mathcal{F}\in D_{c}^{b}(X)$, then
$$CC(RF_{\ast}\mathcal{F})=\lim_{s\rightarrow 0} CC(\mathcal{F})+s d\log f.$$
\end{prop}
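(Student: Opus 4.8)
The plan is to obtain this as a restatement of Theorem 4.2 \cite{SV}, after checking that its hypotheses are met in our algebraic setting.

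First I would recall the ingredients. By Definition \ref{deftn:ccycleperverse}, $CC(\mathcal{F})$ is a semi-algebraic Lagrangian cycle in $T^{\ast}X$, and $RF_{\ast}\mathcal{F}\in D_{c}^{b}(Y)$ because $F$ is an open embedding. The function $f:X\rightarrow\mathbb{R}$ fixed above is real valued, semi-algebraic, of class $C^{1}$, positive on $X$, with zero set $\partial X$; the existence of such an $f$ is Equation (4.1) \cite{SV} together with Proposition I.4.5 \cite{Shiota}. These are exactly the data entering Chapter 4 \cite{SV}.

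Next I would explain that $\{C+sd\log f\}_{s>0}$, with $C=CC(\mathcal{F})$, is a family of cycles in the sense of Equations (3.12)--(3.16) \cite{SV}: for each $s>0$ the assignment (\ref{eq:open1}) is an automorphism of $T^{\ast}X$, namely the fiberwise translation by the closed one-form $s\,df/f$, so it sends the Lagrangian cycle $C$ to a Lagrangian cycle of the same dimension, and the joint dependence on $s$ over $(0,\infty)$ is semi-algebraic, as checked on page 468 \cite{SV}. Consequently the limit $\lim_{s\rightarrow 0}\left(C+sd\log f\right)$ exists as a Lagrangian cycle in $T^{\ast}X$, with support contained in $|C|$ together with conormal directions along $\partial X$.

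Finally, Theorem 4.2 \cite{SV} identifies this limit with $CC(RF_{\ast}\mathcal{F})$; applying it with $C=CC(\mathcal{F})$ yields the proposition. The only subtlety is that Schmid and Vilonen work in the semi-algebraically constructible derived category, whereas our $D_{c}^{b}(X)$ and $D_{c}^{b}(Y)$ use algebraic stratifications; since an algebraically constructible complex is in particular semi-algebraically constructible and the open embedding $F$ is algebraic, Theorem 4.2 applies verbatim. The main (indeed essentially the only) obstacle is this bookkeeping — verifying admissibility of $f$ and the compatibility of the two constructibility frameworks — the analytic heart of the statement being contained in \cite{SV} and invoked here as a black box.
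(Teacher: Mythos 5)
Your proposal matches the paper exactly: the paper does not give an independent proof but simply states this as Theorem 4.2 of \cite{SV}, restricted from the semi-algebraically constructible setting to the algebraic one, with the same remarks about the admissible function $f$ (Equation (4.1) \cite{SV}, Proposition I.4.5 \cite{Shiota}) and about the family $\{CC(\mathcal{F})+s\,d\log f\}_{s>0}$ being a family of cycles (page 468 \cite{SV}). Your bookkeeping on the compatibility of the two constructibility frameworks is precisely the point the paper also flags, so the argument is correct and follows the same route.
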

We notice that, while the family of cycles $\{CC(\mathcal{F})+s d\log f\}_{s>0}$
does not necessarily live in the set of characteristic cycles for the derived category of 
sheaves whose cohomology is constructible with respect to an algebraic stratification, the limit does.
Following Proposition \ref{prop:openembeding} 
we define for each $\mathcal{F}\in D_{c}^{b}(X)$  
\begin{align}\label{eq:openpush}
F_{\ast}CC(\mathcal{F}):=\lim_{s\rightarrow 0} CC(\mathcal{F})+s d\log f.
\end{align}

Finally, to treat the case of an arbitrary algebraic map $F:X\rightarrow Y$  
we follow Chapter 6  \cite{SV}. We embed $X$ as an open subset of a compact algebraic manifold $\overline{X}$, and we factor $F$ into a product of three mappings: the closed embedding 
\begin{align*}
i:X &\rightarrow X\times Y\\
     x&\mapsto(x,F(x))
\end{align*}
which is a simple case of a proper direct image, the open inclusion
$$j : X\times Y \rightarrow \overline{X} \times Y ,$$
and the projection 
$$\overline{p} : \overline{X}\times Y \rightarrow Y$$
which is also a proper map.
Then we can factor the derived functor $RF_{\ast}:D_{c}^{b}(X)\rightarrow D_{c}^{b}(Y)$ into the product 
$$RF_{\ast}=R\overline{p}_{\ast}\circ Rj_{\ast}\circ Ri_{\ast}.$$
From Theorem \ref{prop:KS} and Theorem \ref{prop:openembeding}
for each $\mathcal{F}\in D_{c}^{b}(X)$ we have
\begin{align}\label{eq:generalpushforward}
CC(RF_{\ast}\mathcal{F})&=CC(R\overline{p}_{\ast}\circ Rj_{\ast}\circ Ri_{\ast}(\mathcal{F}))\\
&=\overline{p}_{\ast}CC(Rj_{\ast}\circ Ri_{\ast}(\mathcal{F}))\nonumber\\
&=(\overline{p}_{\ast}\circ j_{\ast})CC(Ri_{\ast}\mathcal{F})\nonumber\\
&=(\overline{p}_{\ast}\circ j_{\ast}\circ i_{\ast})CC(\mathcal{F}).\nonumber
\end{align}
Consequently, we define
\begin{align}\label{eq:svfunctor}
F_{\ast}&:\mathscr{L}(X)\rightarrow \mathscr{L}(Y)\\
F_{\ast}&:=\overline{p}_{\ast}\circ j_{\ast}\circ i_{\ast}.\nonumber
\end{align}
Let us return to our map $a:K\times_{K_L}X_G\rightarrow X_G$ of Definition \ref{deftn:geominduction0}. 
Suppose $\mathcal{F}\in D_{c}^{b}(X_{L},K_L)$
and consider the sheaf 
$I_{L}^{G}\mathcal{F}=Ra_{\ast}(\widetilde{R\iota_{\ast}\mathcal{F}})$
of Definition \ref{deftn:geominduction}.
Our objective is to give a more explicit description of 
$CC(I_{L}^{G}\mathcal{F})$
by reducing its computation to the one for  
the cycle $CC(\mathcal{F})$.
From (\ref{eq:generalpushforward}) 
we can write
\begin{align}\label{eq:reduction}
CC(I_{L}^{G}\mathcal{F})=CC(Ra_{\ast}(\widetilde{R\iota_{\ast}\mathcal{F}}))=a_{\ast}CC
(\widetilde{R\iota_{\ast}\mathcal{F}}).
\end{align}
Thus, to be able to compute
$CC(I_{L}^{G}\mathcal{F})$ 
the first step is to relate the characteristic cycle of $\widetilde{R\iota_{\ast}\mathcal{F}}$
to the characteristic cycle of $\mathcal{F}$. 
This is done in the two following results. The first is a reformulation of Proposition 7.14 \cite{ABV}, Proposition 20.2 \cite{ABV} and Lemma 1.4 \cite{ViMir}.
\begin{prop}\label{prop:reduction2}
Suppose $X$ is a smooth complex algebraic variety 
on which an algebraic group $H$ acts with finitely many orbits.
Suppose $G$ is an algebraic group containing $H$. 
Consider the bundle 
\begin{align*}
Y=G\times_{H}X,
\end{align*}
on which the group G acts by
$$g\cdot (g',x)=(gg',x).$$ 
Then: 
\begin{enumerate}[1.]
\item The inclusion 
\begin{align*}
i:X&\rightarrow Y\\
x& \mapsto \text{ equivalence class of }(e,x)
\end{align*}
induces a bijection 
from $H$-orbits on $X$ 
to $G$-orbits on $Y$. Furthermore, this bijection preserves the inclusion relations 
 of closures.

\item There are natural equivalences of categories,
\begin{align*}
D_{c}^{b}(X,H)\cong D_{c}^{b}(G\times_{H}X,G),\quad 
\mathcal{P}(X,H)\cong \mathcal{P}(G\times_{H}X,G),\quad
\mathcal{D}(X,H)\cong \mathcal{D}(G\times_{H}X,G).
\end{align*}

\item Write $j:\mathfrak{h}\times X\rightarrow \mathfrak{g}\times X$
for the inclusion, and consider the bundle map
$$j\times \mathcal{A}:\mathfrak{h}\times X\rightarrow \mathfrak{g}\times TX,$$
with $\mathcal{A}:\mathfrak{h}\times X\rightarrow TX$ defined by Equation (19.1)(c) \cite{ABV}. Write $Q$
for the quotient bundle: the fiber at $x$ is
$$Q_x=(\mathfrak{g}\times T_x X)/\{(X,\mathcal{A}_{x}(X)):X\in\mathfrak{h}\}.$$
Then the tangent bundle of $Y$ is naturally isomorphic to the bundle on $Y$ induced by $Q$
$$TY\cong G\times_{H}Q.$$
\item The action mapping $\mathcal{A}_{y}:\mathfrak{h}\times Y\rightarrow TY$ (see Equation (19.1) \cite{ABV}) may be computed as
follows. Fix a representative $(g,x)$ for the point $y$ of $G\times_{H}X$, and
an element $Z\in\mathfrak{g}$. Then
$$\mathcal{A}_{y}(Z)=\mathrm{~class~of~}(g,(\mathrm{Ad}(g^{-1})Z,(x,0))).$$
Here $(x,0)$ is the zero element of $T_x X$, so the term paired with $g$ on
the right side represents a class in $Q_x$.
\item The conormal bundle to the $G$-action on the induced bundle  $G\times_{H}X$ is naturally induced by 
the conormal bundle to the $H$-action on $X$:
\begin{align*}
T_{G}^{\ast}(G\times_{H}X)\cong G\times_{H}T_{H}^{\ast}X.
\end{align*}
\item 
Suppose $\mathcal{F}$ and $\widetilde{\mathcal{F}}$ correspond
through any of the equivalences of categories of (2), above.
Then
$$CC(\widetilde{\mathcal{F}})=G\times_{H} CC({\mathcal{F}}).$$
In particular, the microlocal multiplicities (see Equation (\ref{eq:microlocalmultiplicity})) are given by
$$\chi_{G\times_{H}S}^{mic}(\widetilde{\mathcal{F}})=\chi_{S}^{mic}(\mathcal{F}),$$
and
$$CC(\widetilde{\mathcal{F}})=\sum_{H-\mathrm{orbits }~S~\mathrm{in}~X}\chi_{S}^{mic}(\mathcal{F})[\overline{T_{G\times_{H}S}^{\ast}(G\times_{H}X)}].$$ 
\end{enumerate}
\end{prop}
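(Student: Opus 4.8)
The plan is to prove the six assertions in order: (1)--(5) are geometric statements that follow from the definition of $Y=G\times_H X$ by direct computation (and are the content of the cited results), while (6) is then deduced from (1), (2), (5) together with the additivity of microlocal multiplicities (Proposition \ref{prop:additive}).

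For (1) I would describe the $G$-orbits on $Y$ explicitly. A point of $Y$ is a class $\overline{(g,x)}$ with $(g,x)\sim(gh^{-1},hx)$ for $h\in H$, and $g'\cdot\overline{(g,x)}=\overline{(g'g,x)}$; one checks that $\overline{(g_1,x_1)}$ and $\overline{(g_2,x_2)}$ lie in the same $G$-orbit if and only if $x_2\in Hx_1$, so $\overline{(g,x)}\mapsto Hx$ is a well-defined bijection from $G$-orbits on $Y$ to $H$-orbits on $X$, inverse to the one induced by the ($H$-equivariant) inclusion $i$. For an $H$-orbit $S$ the associated $G$-orbit is the sub-bundle $G\times_H S$; since $Y\to G/H$ is a locally trivial fibration with fibre $X$ and $\overline S$ is $H$-stable, $\overline{G\times_H S}=G\times_H\overline S=\bigcup_{S'\subseteq\overline S}G\times_H S'$, which gives the compatibility with closure relations. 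This is Proposition 7.14 of \cite{ABV}.

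For (2) I would invoke equivariant descent along $Y\to G/H$: a $G$-equivariant object on $Y$ is equivalent to the pair consisting of its restriction to the fibre $i(X)\cong X$ over $eH$ and the natural $H$-equivariant structure on that restriction; this is Lemma 1.4 of \cite{ViMir} for constructible sheaves and the cited material of \cite{ABV} for perverse sheaves and $\mathcal D$-modules. For (3)--(5) the point is that $q\colon G\times X\to Y$ is a principal $H$-bundle for $h\cdot(g,x)=(gh^{-1},hx)$, so $q^{*}TY$ is the quotient of $T(G\times X)$ by the image of the infinitesimal $H$-action; trivialising $TG$ by left translations and reading off the fibre over $(e,x)$ gives $T_xY\cong Q_x$, which descends to $TY\cong G\times_H Q$ (this is (3), Proposition 20.2 of \cite{ABV}). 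For (4) I would compute, for a representative $(g,x)$ of $y$ and $Z\in\mathfrak g$,
\[
\mathcal A_y(Z)=\left.\frac{d}{dt}\right|_{t=0}\overline{(\exp(tZ)g,x)}=\left.\frac{d}{dt}\right|_{t=0}\overline{\bigl(g\exp(t\,\mathrm{Ad}(g^{-1})Z),\,x\bigr)},
\]
which under $T_yY\cong Q_x$ is the class of $(\mathrm{Ad}(g^{-1})Z,(x,0))$. For (5) I would note that by (4) the image of $\mathcal A_y$ is exactly the image of $\mathfrak g\times\{0\}$ in $Q_x$ (since $Z\mapsto\mathrm{Ad}(g^{-1})Z$ is onto), so its annihilator inside $Q_x^{*}$ consists of the covectors with trivial $\mathfrak g$-component, i.e.\ of the $\xi\in T_x^{*}X$ vanishing on $\mathcal A_x(\mathfrak h)$; this fibre is $T^{*}_{H,x}X$, and letting $y$ vary gives $T^{*}_G(G\times_H X)\cong G\times_H T^{*}_H X$.

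For (6), by (2) the functor $\mathcal F\mapsto\widetilde{\mathcal F}$ is an exact equivalence, hence induces an isomorphism of Grothendieck groups, so by Proposition \ref{prop:additive} it suffices to compare microlocal multiplicities (equivalently, to treat $\mathcal F$ irreducible). Since $Y\to G/H$ is a smooth fibration with fibre $X$ and the equivalence of (2) is restriction to the fibre over $eH$, the characteristic variety of the $\mathcal D_Y$-module attached to $\widetilde{\mathcal F}$ is, locally over the base, the pullback of $\mathrm{Ch}(\mathcal F)$ along a smooth projection; hence its irreducible components and their multiplicities are in multiplicity-preserving bijection with those of $\mathcal F$ via $C\mapsto G\times_H C$, and combining this with (5) and Theorem \ref{theo:defcc}(ii) yields $\chi^{mic}_{G\times_H S}(\widetilde{\mathcal F})=\chi^{mic}_S(\mathcal F)$ and the displayed formula; this is Proposition 20.2 of \cite{ABV} together with Lemma 1.4 of \cite{ViMir}. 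I expect (6) to be the main obstacle: the delicate part is showing that the equivalence of (2) respects the $\mathcal D$-module structure finely enough to send good filtrations to good filtrations, so that characteristic varieties with multiplicities transport as stated. The cleanest route is to localise on $G/H$, where $Y$ becomes a product $X\times U$ with $U$ open in $G/H$, reducing everything to the standard behaviour of characteristic cycles under pullback along a smooth projection; the only remaining bookkeeping is equivariance and the matching of conormal bundles supplied by (1) and (5).
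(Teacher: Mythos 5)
Your proposal is correct and matches the paper's treatment: the paper gives no independent proof of this proposition, stating it only as a reformulation of Proposition 7.14 and Proposition 20.2 of \cite{ABV} and Lemma 1.4 of \cite{ViMir}, which are exactly the results (and the standard descent/fibre-restriction arguments behind them) that you reconstruct. Your fleshed-out sketch of (1)--(5) by direct computation on $G\times_H X$ and of (6) by localising over $G/H$ and using smooth pullback of characteristic cycles plus additivity is the argument those references carry out, so there is nothing to fault beyond minor sign conventions in the infinitesimal action.
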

\begin{cor}\label{cor:reductionstep2}
In the setting of Definition \ref{deftn:geominduction},
 let $\mathcal{F}\in D_{c}^{b}(X_{L},K_{L})$. Then
\begin{align}\label{eq:reductionstep2}
CC(\widetilde{R\iota_{\ast}\mathcal{F}})=\sum_{K_L-\mathrm{orbits }~S_L~\mathrm{in}~X_{L}} \chi_{S_L}^{mic}(\mathcal{F})[\overline{T_{K\times_{K_L} S_L}^{\ast}(K\times_{K_L}X_{G}})].
\end{align}
where $\widetilde{R\iota_{\ast}\mathcal{F}}$ is the unique element in 
$D_{c}^{b}(K\times_{K_L}X_G,K)$ satisfying
$p^{\ast}(R{\iota}_{\ast}\mathcal{F})=\mu^{\ast}(\widetilde{R\iota_{\ast}\mathcal{F}})$ (see Definition \ref{deftn:geominduction0}).
\end{cor}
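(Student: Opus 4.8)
The plan is to split the computation into two independent pieces: first descend the computation of $CC(\widetilde{R\iota_{\ast}\mathcal{F}})$ to that of $CC(R\iota_{\ast}\mathcal{F})$ by means of Proposition~\ref{prop:reduction2}, and then compute $CC(R\iota_{\ast}\mathcal{F})$ directly, exploiting that $\iota$ is a closed embedding. For the first piece, apply Proposition~\ref{prop:reduction2} with the generic ambient group and subgroup taken to be $K$ and $K_{L}$ and with the variety $X$ taken to be $X_{G}$, so that $Y = K\times_{K_{L}}X_{G}$; then the object $\widetilde{R\iota_{\ast}\mathcal{F}}\in D_{c}^{b}(K\times_{K_{L}}X_{G},K)$ of Definition~\ref{deftn:geominduction} is exactly the image of $R\iota_{\ast}\mathcal{F}\in D_{c}^{b}(X_{G},K_{L})$ under the equivalence of part~(2) of that proposition, both being characterised by the descent relation $p^{\ast}(R\iota_{\ast}\mathcal{F}) = \mu^{\ast}(\widetilde{R\iota_{\ast}\mathcal{F}})$. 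Part~(6), combined with parts~(1) and~(5) which identify the $K_{L}$-orbits of $X_{G}$ with the $K$-orbits of $K\times_{K_{L}}X_{G}$ and match up their conormal bundles, then gives
$$CC(\widetilde{R\iota_{\ast}\mathcal{F}}) = \sum_{K_{L}\text{-orbits }S\text{ in }X_{G}}\chi_{S}^{mic}(R\iota_{\ast}\mathcal{F})\,[\overline{T_{K\times_{K_{L}}S}^{\ast}(K\times_{K_{L}}X_{G})}],$$
so everything reduces to computing the integers $\chi_{S}^{mic}(R\iota_{\ast}\mathcal{F})$.

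For the second piece, recall that $\iota\colon X_{L}\to X_{G}$ is the inclusion of the fibre over $Q$ of the proper morphism $X_{G}\to G/Q$, hence a closed embedding and in particular a proper map. Proposition~\ref{prop:KS} then yields $CC(R\iota_{\ast}\mathcal{F}) = \iota_{\ast}CC(\mathcal{F})$, and by Theorem~\ref{theo:defcc} it suffices to evaluate $\iota_{\ast}$ on each generator $[\overline{T_{S_{L}}^{\ast}X_{L}}]$, with $S_{L}$ a $K_{L}$-orbit of $X_{L}$. Here I would unwind the diagram~(\ref{eq:dF}) for $F=\iota$: since $\iota$ is a closed embedding, $X_{L}\times_{X_{G}}T^{\ast}X_{G}$ is the restriction $T^{\ast}X_{G}|_{X_{L}}$, the map $d\iota$ is the surjective vector bundle map dual to the inclusion $TX_{L}\hookrightarrow TX_{G}|_{X_{L}}$, and $\tau$ is the closed inclusion $T^{\ast}X_{G}|_{X_{L}}\hookrightarrow T^{\ast}X_{G}$. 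As $d\iota$ is a submersion it is transverse to every cycle, so the refined pullback $d\iota^{\ast}$ in~(\ref{eq:Gysin}) is the (reduced, irreducible) set-theoretic preimage, with multiplicity one; a computation in suitable local coordinates along a point of $S_{L}$ splitting $X_{G}$ as a product of $X_{L}$ with a transverse slice then shows that $(d\iota)^{-1}(\overline{T_{S_{L}}^{\ast}X_{L}})$ is precisely the closure $\overline{T_{\iota(S_{L})}^{\ast}X_{G}}$ of the conormal bundle of $\iota(S_{L})$ in $X_{G}$, while $\tau_{\ast}$ adds no further multiplicity. Hence $\iota_{\ast}[\overline{T_{S_{L}}^{\ast}X_{L}}] = [\overline{T_{\iota(S_{L})}^{\ast}X_{G}}]$ and
$$CC(R\iota_{\ast}\mathcal{F}) = \sum_{K_{L}\text{-orbits }S_{L}\text{ in }X_{L}}\chi_{S_{L}}^{mic}(\mathcal{F})\,[\overline{T_{\iota(S_{L})}^{\ast}X_{G}}].$$

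To conclude, I would note that $\iota$ is $K_{L}$-equivariant and that $K_{L}\subset L\subset Q$ fixes $Q\in G/Q$, so $\iota$ maps the $K_{L}$-orbits of $X_{L}$ bijectively onto the $K_{L}$-orbits of $X_{G}$ contained in the closed subvariety $\iota(X_{L})$; since $\mathrm{Supp}(R\iota_{\ast}\mathcal{F})\subset\iota(X_{L})$, every other $K_{L}$-orbit $S$ of $X_{G}$ contributes $\chi_{S}^{mic}(R\iota_{\ast}\mathcal{F}) = 0$, while $\chi_{\iota(S_{L})}^{mic}(R\iota_{\ast}\mathcal{F}) = \chi_{S_{L}}^{mic}(\mathcal{F})$ by the last display. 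Substituting into the expansion from the first piece, and writing $K\times_{K_{L}}S_{L}$ for $K\times_{K_{L}}\iota(S_{L})$ as in the statement, yields~(\ref{eq:reductionstep2}). I expect the only genuinely substantial step to be the local Gysin computation in the second piece — verifying that $d\iota^{\ast}$ carries $[\overline{T_{S_{L}}^{\ast}X_{L}}]$ onto the \emph{full} closure of $T_{\iota(S_{L})}^{\ast}X_{G}$ with multiplicity exactly one; the remainder is bookkeeping with the equivariant structures and with the proper-pushforward and descent formulas already established above (the content of this local step is also closely related to the discussion around Proposition~7.14 of \cite{ABV}).
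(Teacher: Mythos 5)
Your proposal is correct and follows essentially the same route as the paper: reduce via Proposition \ref{prop:reduction2}(6) to $CC(R\iota_{\ast}\mathcal{F})$, use that $\iota$ is a closed immersion together with Proposition \ref{prop:KS}, and identify $\iota_{\ast}[\overline{T_{S_L}^{\ast}X_L}]=[\overline{T_{\iota(S_L)}^{\ast}X_G}]$ via the preimage under $d\iota$. The only (harmless) divergence is that you justify this last identification by observing that $d\iota$ is a fibrewise-surjective bundle map, hence transverse to every cycle, whereas the paper invokes the limit-of-transverse-families argument of Proposition \ref{prop:familylimit}; both yield the same multiplicity-one computation.
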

Before giving the proof of the corollary, notice that from the definition of  $\widetilde{R\iota_{\ast}\mathcal{F}}$
and the proof of Proposition \ref{prop:reduction2}(2) (see pages 93-94 \cite{ABV}), 
$R\iota_{\ast}\mathcal{F}$ and $\widetilde{R\iota_{\ast}\mathcal{F}}$ correspond through the 
equivalence of categories of Proposition \ref{prop:reduction2}(2).
\begin{proof}
Suppose $\mathcal{F}\in D_{c}^{b}(X_{L},K_{L})$ and write
$$CC(\mathcal{F})=\sum_{K_{L}-\mathrm{orbits }~S_L~\mathrm{in}~X_{L}} \chi_{S_L}^{mic}(\mathcal{F})[\overline{T_{S}^{\ast}X_{L}}]$$
for the corresponding characteristic cycle.
From Proposition \ref{prop:reduction2}(6), 
the characteristic cycle of
$\widetilde{R\iota_{\ast}\mathcal{F}}$
may be identified as a cycle in $T^{\ast}_{K}(K\times_{K_L}X)$ as 
\begin{align*}
CC(\widetilde{R\iota_{\ast}\mathcal{F}})=K\times_{K_L} CC(R\iota_{\ast}\mathcal{F}).
\end{align*}
From Proposition 6.16 \cite{ABV} and Corollary 6.21 \cite{ABV} the inclusion 
$\iota:X_L \longrightarrow X_G$
is a closed immersion and in consequence proper.
Consider the diagram
\begin{align}
T^{\ast}X_{L}\xleftarrow{d\iota} X_{L}\times_{X_G}T^{\ast}X_G\xrightarrow{\tau} T^{\ast}X_{L}. 
\end{align}
By Proposition \ref{prop:KS} we have
\begin{align*}
CC(R\iota_{\ast}\mathcal{F})&=\iota_{\ast}CC(\mathcal{F})\\
&=\tau_{\ast}\circ d\iota^{\ast}CC(\mathcal{F}),
\end{align*}
where $\tau_{\ast}$ and $d\iota^{\ast}$ are defined as in (\ref{eq:Gysin}). 
Writing $CC(\mathcal{F})$ as the limit of a family of cycles transverse to
$d\iota$, by Proposition \ref{prop:familylimit} we
 conclude that $\iota_{\ast}CC(\mathcal{F})$ is 
the inverse image of $CC(\mathcal{F})$ under 
$$d\iota:T^{\ast}{X_G}|_{\iota(X_{L})}\rightarrow T^{\ast}X_{L}.$$ 
Since the inverse image of the conormal bundle of each $K_L$-orbit 
$S_{L}$ in $X_{L}$ is given by
$$d\iota^{-1}(T_{S_{L}}^{\ast}X_{L})=T_{\iota(S_{L})}^{\ast}X_G$$
we obtain
\begin{align*}
CC(R\iota_{\ast}\mathcal{F})&=\iota_{\ast}CC(\mathcal{F})\\
&=\sum_{K_L-\mathrm{orbits }~S_L~\mathrm{in}~X_L} \chi_{S_{L}}^{mic}(\mathcal{F})[\overline{T_{\iota(S_{L})}^{\ast}X_G}].
\end{align*}
Consequently
\begin{align*}
CC(\widetilde{R\iota_{\ast}\mathcal{F}})&=K\times_{K_L} \sum_{K_L-\mathrm{orbits }~S_L~\mathrm{in}~X_L} \chi_{S_{L}}^{mic}(\mathcal{F})[\overline{T_{\iota(S_{L})}^{\ast}X_G}]\\
&=\sum_{K_L-\mathrm{orbits }~S~\mathrm{in}~X_{G}} \chi_{S}^{mic}({R\iota_{\ast}\mathcal{F}})[\overline{T_{K\times_{K_L} \iota_{\ast}(S)}^{\ast}(K\times_{K_L}X_G})]\\
&=\sum_{K_L-\mathrm{orbits }~S_L~\mathrm{in}~X_L} \chi_{S_{L}}^{mic}(\mathcal{F})[\overline{T_{K\times_{K_L} \iota(S_{L})}^{\ast}(K\times_{K_L}X_G})].
\end{align*}
\end{proof}
The remaining step in the characterization of $CC(I_{L}^{G}\mathcal{F})$
is to describe the effect of taking $a_{\ast}$
on $CC(\widetilde{R\iota_{\ast}\mathcal{F}}).$ 
This is done in the proof of the following theorem. 
\begin{theo}\label{prop:ccLG}
In the setting of Definition \ref{deftn:geominduction},
 let $\mathcal{F}\in D_{c}^{b}(X_{L},K_{L})$. Then
$$CC\left(I_{L}^{G}\mathcal{F}\right)=\sum_{K_L-\mathrm{orbits }~S_L~\mathrm{in}~X_L} \chi_{S_{L}}^{mic}(\mathcal{F})[\overline{T_{K\cdot\iota(S_L)}^{\ast}X_G}]+
\sum_{\substack{K-orbits~S\text{~in~}\\
(K\cdot \iota(X_{L}))^c~\cap~ \partial(K\cdot \iota(X_{L})) }} \chi_{S}^{mic}(I_{L}^{G}\mathcal{F})[\overline{T_{S}^{\ast}X_G}].$$ 
\end{theo}
\begin{proof}
Suppose $\mathcal{F}\in D_{c}^{b}(X_{L},K_{L})$ and write
$$CC(\mathcal{F})=\sum_{K_{L}-\mathrm{orbits }~S_L~\mathrm{in}~X_{L}} \chi_{S_L}^{mic}(\mathcal{F})[\overline{T_{S_L}^{\ast}X_{L}}]$$
for the corresponding characteristic cycle.
 From Equation (\ref{eq:reduction}) and Corollary \ref{cor:reductionstep2}
we can write
\begin{align*}
CC\left(I_{L}^{G}\mathcal{F}\right)&=CC(Ra_{\ast}(\widetilde{R\iota_{\ast}\mathcal{F}}))\\
&=a_{\ast}(CC(\widetilde{R\iota_{\ast}\mathcal{F}}))\\
&=a_{\ast}\left( \sum_{K_L-\mathrm{orbits }~S_L~\mathrm{in}~X_L}  \chi_{S_{L}}^{mic}(\mathcal{F})[\overline{T_{K\times_{K_L} \iota(S_{L})}^{\ast}(K\times_{K_L}X_G})]\right)
\end{align*}
We recall how $a_{\ast}$ is defined. As explained after Proposition \ref{prop:openembeding},
we embed ${K\times_{K_L} X_G}$ as an open subset of a compact algebraic manifold 
$\overline{K\times_{K_L} X_G}$ and factorize $a$ into a product of three maps:
the closed embedding 
\begin{align*}
i : K\times_{K_L} X_G &\rightarrow (K\times_{K_L} X_G)\times X_G,\\
(k,x)&\mapsto ((k,x),a(k,x))=((k,x),kx)
\end{align*} 
the open inclusion
$$j : (K\times_{K_L} X_G)\times X_G \rightarrow \overline{K\times_{K_L} X_G}\times X_G,$$
and the projection 
$$\overline{p} : \overline{K\times_{K_L} X_G}\times X_G\rightarrow X_G.$$
For later use we also denote the restriction of $\overline{p}$ to ${K\times_{K_L} X_G}\times X_G$ as
$$p : {K\times_{K_L} X_G}\times X_G\rightarrow X_G.$$ 
The map $a_{\ast}$ is defined by 
$$a_{\ast}:=\overline{p}_{\ast}\circ j_{\ast}\circ i_{\ast}$$
with $\overline{p}_{\ast}$ and $i_{\ast}$ defined by (\ref{eq:Gysin})
and $j_{\ast}$ as in (\ref{eq:openpush}). 
We have
\begin{align*}
CC\left(I_{L}^{G}\mathcal{F}\right)&=\overline{p}_{\ast}(j_{\ast}(i_{\ast}(CC(\widetilde{R\iota_{\ast}\mathcal{F}}))))\\
&=\overline{p}_{\ast}\left(j_{\ast}\left(i_{\ast}\left( \sum_{K_L-\mathrm{orbits }~S_L~\mathrm{in}~X_L}  \chi_{S_{L}}^{mic}(\mathcal{F})[\overline{T_{K\times_{K_L} \iota(S_{L})}^{\ast}(K\times_{K_L}X_G})]\right)\right)\right).
\end{align*}
We start by describing 
$i_{\ast}CC(\widetilde{R\iota_{\ast}\mathcal{F}})$.
Notice that 
the same argument used to compute $\iota_{\ast}CC(\mathcal{F})$ in Corollary \ref{cor:reductionstep2} allows us to conclude that  
$i_{\ast}CC(\widetilde{R\iota_{\ast}\mathcal{F}})$ is the inverse image of $CC(\widetilde{R\iota_{\ast}\mathcal{F}})$ under
$$di:T^{\ast}((K\times_{K_L}X_G)\times X_G)|_{i(K\times_{K_L}X_G)}\rightarrow T^{\ast} (K\times_{K_L}X_G).$$
Since the map $a$ occurs in the definition of $i$, to compute the inverse image 
of $di$ we need first to compute the inverse image under 
$$da: (K\times_{K_L}X_G)\times_{X_G}T^{\ast}X_G\rightarrow T^{\ast}(K\times_{K_L}X_G)$$
of the conormal bundle of each $K$-orbit in $K\times_{K_L}X_G$. To do that
we begin by noticing that, since by Proposition \ref{prop:reduction2}$(3)$ for each $(k,x)\in K\times_{K_L}X_G$  we have
$$T_{(k,x)}(K\times_{K_L}X_G)\cong (\mathfrak{k}\times T_{x}X_G)/\{(X,\mathcal{A}_{x}(X)):X\in \mathfrak{k}\cap \mathfrak{l}\},$$
with $\mathcal{A}_{x}:\mathfrak{k}\rightarrow T_{x}X_G$ the differential of the action map at $x$
(see Equation 19.1(a) \cite{ABV}),
the map $da_{(k,x)}:T_{(k,x)}(K\times_{K_L}X_G)\rightarrow T_{kx}X_G$
can be represented as
$$da_{(k,x)}=\text{Ad}(k)\circ \mathcal{A}_{x}+\text{Ad}(k).$$
Then from Proposition \ref{prop:reduction2}$(4)$, it is an easy 
exercise 
to verify that 
for each $K$-orbit 
$K\times_{K_L} \iota(S_{L})$ in $K\times_{K_L}X_G$, we have
$$d{a}^{-1}({T_{(k,x),K\times_{K_L} \iota(S_{L})}^{\ast}(K\times_{K_L}X_G)})={T_{kx,K\cdot\iota(S_L)}^{\ast}X_G}.$$
Now, for each $((k,x),x')\in (K\times_{K_L}\times X_G)\times X_G$ we have
$$T^{\ast}_{((k,x),x')}((K\times_{K_L}X_G)\times X_G)\cong 
T^{\ast}_{(k,x)}(K\times_{K_L}X_G)\times T^{\ast}_{x'}X_G,$$
and the image of each element $(\lambda,\lambda')\in T^{\ast}_{(k,x)}(K\times_{K_L}X_G)\times T^{\ast}_{kx}X_G$ under the map $di$ is  
$$(\lambda,\lambda')\mapsto \lambda+\lambda'\circ da_{(k,x)}.$$
Consequently, each element of $T^{\ast}_{(k,x)}(K\times_{K_L}X_G)\times T^{\ast}_{kx}X_G$
in the preimage under $di$ of the annihilator of $T_{(k,x)} K\times_{K_L} \iota(S_{L})$ in 
$T^{\ast}_{(k,x)} (K\times_{K_L}X_G)$
must be a linear combination of elements of the form 
\begin{itemize}
\item $(\lambda,0)\in T^{\ast}_{(k,x)}(K\times_{K_L}X_G)\times T^{\ast}_{kx}X_G$,\text{ with } $\lambda|_{T_{(k,x)} K\times_{K_L} \iota(S_{L})}=0,$
\item $(0,\lambda')\in T^{\ast}_{(k,x)}(K\times_{K_L}X_G)\times T^{\ast}_{kx}X_G$,\text{ with } $\lambda'|_{T_{kx} K\cdot\iota(S_{L})}=0,$
\item $(\lambda,\lambda')\in T^{\ast}_{(k,x)}(K\times_{K_L}X_G)\times T^{\ast}_{kx}X_G$,\text{ with } $(\lambda+\lambda'\circ da_{(k,x)})|_{T_{(k,x)} K\times_{K_L} \iota(S_{L})}=0,$
\end{itemize}
and one may verify that this space is ${T_{((k,x),kx),i(K\times_{K_L}\iota(S_{L}))}^{\ast}((K\times_{K_L}X_G)\times X_G)}$. 
Therefore, 
for the conormal bundle of each $K$-orbit 
$K\times_{K_L} \iota(S_{L})$ in $K\times_{K_L}X_G$ we obtain
\begin{align}\label{eq:before}
di^{-1}({T_{K\times_{K_L} S_{L}}^{\ast}(K\times_{K_L}X_G)})
={T_{i(K\times_{K_L}\iota(S_{L}))}^{\ast}((K\times_{K_L}X_G)\times X_G)}
\end{align}
and so
\begin{align*}
i_{\ast}CC(\widetilde{R\iota_{\ast}\mathcal{F}})
&=i_{\ast}\left( \sum_{K_L-\mathrm{orbits }~S_L~\mathrm{in}~X_L}  \chi_{S_{L}}^{mic}(\mathcal{F})[\overline{T_{K\times_{K_L} \iota(S_{L})}^{\ast}(K\times_{K_L}X_G})]\right)\\
&=\sum_{K_L-\mathrm{orbits }~S_L~\mathrm{in}~X_L} \chi_{S_{L}}^{mic}(\mathcal{F})[\overline{T_{i(K\times_{K_L}\iota(S_{L}))}^{\ast}((K\times_{K_L}X_G)\times X_G)}].
\end{align*}
Next we compute $j_{\ast}(i_{\ast}CC(\widetilde{R\iota_{\ast}\mathcal{F}}))$.
In order to do this we fix, as in the paragraph previous to Equation (\ref{eq:open1}), a 
function
$$f:\overline{K\times_{K_L}X_G}\rightarrow \mathbb{R},$$ 
which takes strictly positive values 
on $K\times_{K_L}X_G$ and vanishes on the boundary of ${K\times_{K_L}X_G}$ in $\overline{K\times_{K_L}X_G}$. 
Define the family of cycles 
$\{i_{\ast}CC(\widetilde{R\iota_{\ast}\mathcal{F}})+sd\log f\}_{s>0}$ as in (\ref{eq:open1}). 
From Proposition \ref{prop:openembeding} we have
\begin{align}\label{eq:intermediate}
j_{\ast}(i_{\ast}CC(\widetilde{R\iota_{\ast}\mathcal{F}}))&=\lim_{s\rightarrow 0}
i_{\ast}CC(\widetilde{R\iota_{\ast}\mathcal{F}})+sd\log f\\
&=\lim_{s\rightarrow 0} \sum_{K_L-\mathrm{orbits }~S_L~\mathrm{in}~X_L} \chi_{S_{L}}^{mic}(\mathcal{F})[\overline{T_{i(K\times_{K_L}\iota(S_{L}))}^{\ast}((K\times_{K_L}X_G)\times X_G)}]+sd\log f.\nonumber
\end{align}
It only remains to compute the image under $\overline{p}_{\ast}$ of $j_{\ast}(i_{\ast}CC(\widetilde{R\iota_{\ast}\mathcal{F}}))$.
Consider the diagrams
\begin{align*}
T^{\ast}(\overline{K\times_{K_L}X_G}\times X_G)\xleftarrow{d\overline{p}}(\overline{K\times_{K_L}X_G}\times X_G)\times_{X_G} T^{\ast}X_G\xrightarrow{\bar{\tau}}T^{\ast}X_G\\
T^{\ast}({K\times_{K_L}X_G}\times X_G)\xleftarrow{d{p}}({K\times_{K_L}X_G}\times X_G)\times_{X_G} T^{\ast}X_G\xrightarrow{{\tau}}T^{\ast}X_G
\end{align*}
Then by (\ref{eq:familylimit}) and (\ref{eq:intermediate})
\begin{align*}
\overline{p}_{\ast}(j_{\ast}(i_{\ast}CC(\widetilde{R\iota_{\ast}\mathcal{F}})))=\overline{\tau}_{\ast} d\overline{p}^{\ast}\left(\lim_{s\rightarrow 0}
i_{\ast}CC(\widetilde{R\iota_{\ast}\mathcal{F}})+sd\log f\right).
\end{align*}
By Lemma 6.4  \cite{SV}, the function $f$ can be chosen  
in such a way that for every sufficiently small $s>0$, 
$T^{\ast}_{K}(K\times_{K_L}X_G\times X_G)+sd\log f$
is transverse to 
$(\overline{K\times_{K_L}X_G}\times X_G)\times_{X_G} T^{\ast}X_G$.
The transversality condition implies that
the geometric inverse image 
$d\overline{p}^{-1}(i_{\ast}CC(\widetilde{R\iota_{\ast}\mathcal{F}})+sd\log f)$
is well-defined as a cycle and
$$d\overline{p}^{\ast}(i_{\ast}CC(\widetilde{R\iota_{\ast}\mathcal{F}})+sd\log f)=
d\overline{p}^{-1}(i_{\ast}CC(\widetilde{R\iota_{\ast}\mathcal{F}})+sd\log f).$$
Moreover, by Proposition \ref{prop:familylimit} 
\begin{align}\label{eq:dercor}
\overline{\tau}_{\ast} d\overline{p}^{\ast}\left(\lim_{s\rightarrow 0}
i_{\ast}CC(\widetilde{R\iota_{\ast}\mathcal{F}})+sd\log f\right)
=\lim_{s\rightarrow 0}\overline{\tau}_{\ast}d\overline{p}^{-1}(i_{\ast}CC(\widetilde{R\iota_{\ast}\mathcal{F}})+sd\log f).
\end{align}
Next, from Equation (\ref{eq:open1}) for each $s>0$, $i_{\ast}CC(\widetilde{R\iota_{\ast}\mathcal{F}})+s d\log f$ 
is a cycle of  $K\times_{K_L}X_G\times X_G$. Consequently, the family of cycles $\{i_{\ast}CC(\widetilde{R\iota_{\ast}\mathcal{F}})+s d\log f\}_{s>0}$ lies entirely in $T^{\ast}(K\times_{K_L}X_G\times X_G)$.
This permits us to use the map $\tau_{\ast}$ and $dp$  instead of $\overline{\tau}_{\ast}$ and $d\overline{p}$
 on the right hand side of (\ref{eq:dercor}) and write
\begin{align}\label{eq:eqcorfinal}
\overline{p}_{\ast}(j_{\ast}(i_{\ast}CC(\widetilde{R\iota_{\ast}\mathcal{F}})))&=
\lim_{s\rightarrow 0}\tau_{\ast}d{p}^{-1}(i_{\ast}CC(\widetilde{R\iota_{\ast}\mathcal{F}})+s d\log f)\\
&=\lim_{s\rightarrow 0}\tau_{\ast}d{p}^{-1}\left(\sum_{K_L-\mathrm{orbits }~S_L~\mathrm{in}~X_L} \chi_{S_{L}}^{mic}(\mathcal{F})[\overline{T_{i(K\times_{K_L}\iota(S_{L}))}^{\ast}((K\times_{K_L}X_G)\times X_G)}]+s d\log f\right).\nonumber
\end{align}
To compute the right hand side of (\ref{eq:eqcorfinal}) we begin by noticing that
$$(K\times_{K_L}X_G)\times X_G\times_{X_G} T^{\ast}X_G=(K\times_{K_L}X_G)\times T^{\ast}X_G=T^{\ast}_{K\times_{K_L}X_G}(K\times_{K_L}X_G)\times T^{\ast}X_G.$$
Hence $dp$ can be written as
$$dp: T^{\ast}_{K\times_{K_L}X_G}(K\times_{K_L}X_G)\times T^{\ast}X_G \rightarrow T^{\ast}(K\times_{K_L}X_G\times X_G),$$ 
with the image of each point $(0,\lambda)\in T^{\ast}_{(k,x),K\times_{K_L}X_G}(K\times_{K_L}X_G)\times T_{x'}^{\ast}X_G$ given by
\begin{align}
(0,\lambda) \mapsto \lambda\circ dp_{((k,x),x')}=(0,\lambda). 
\end{align}
Therefore $dp$ defines an embedding, and we conclude that $dp^{-1}(i_{\ast}CC(\widetilde{R\iota_{\ast}\mathcal{F}})+
s d\log f)$ is simply the intersection 
between $T_{K\times_{K_L}X_G}^{\ast}(K\times_{K_L}X_G)\times T^{\ast}X_G$ and 
$i_{\ast}CC(\widetilde{R\iota_{\ast}\mathcal{F}})+s d\log f$.
Now, by Lemma \ref{lem:contention} the
space $K\cdot \mathrm{Ch}(R{\iota}_{\ast}\mathcal{F})$ is contained 
in the characteristic variety of $I_{L}^{G}(\mathcal{F})$. Moreover, for each $s>0$, $dp^{-1}(i_{\ast}CC(\widetilde{R\iota_{\ast}\mathcal{F}})+
s d\log f)$ defines a non-zero cycle. Hence 
from the description of the elements in 
 $T_{((k,x),kx),i(K\times_{K_L}\iota(S_{L}))}^{\ast}((K\times_{K_L}X_G)\times X_G)$
given before Equation (\ref{eq:before}), 
we obtain that for each point $(k,x)\in K\times_{K_L}\iota(S_{L})$ there exists 
$\lambda(f)_{kx}\in T_{kx}^{\ast}X_G$ such that
$$\frac{df_{(k,x)}}{f(k,x)}=(\lambda(f)_{kx}\circ da_{(k,x)}).$$
Each point at the intersection of $T_{(k,x,kx),K\times_{K_L}X_G}(K\times_{K_L}X_G)\times T^{\ast}X_G$ and 
$i_{\ast}CC(\widetilde{R\iota_{\ast}\mathcal{F}})+s d\log f$ is then of the form
$$(-s\lambda(f)_{kx}\circ da_{(k,x)},\lambda+s\lambda(f)_{kx})+\left(s\frac{df_{(k,x)}}{f(k,x)},0\right)=(0,\lambda+s\lambda(f)_{kx}),\text{ where }\lambda\in T_{kx,K\cdot\iota(S_L)}^{\ast}X_G.$$
Consequently
$$\tau_{\ast}dp^{-1}\left({T_{((k,x),kx),i(K\times_{K_L}\iota(S_{L}))}^{\ast}(K\times_{K_L} X_G)\times X_G}+s \frac{df_{(k,x)}}{f(k,x)}\right)=
{T_{kx,K\cdot\iota(S_L)}^{\ast}X_G}+s\lambda(f)_{kx}.$$
Thus, if for each cycle $C\in \mathscr{L}(X_G,K)$
with support $|C|\subset {K\cdot \iota(X_L)}$ (See Equation \ref{eq:supportCycle}) and for each $s>0$, we define 
$C + s\lambda(f)$ as the cycle of $X_G$ equal to the image of $C$ under the automorphism 
\begin{align*}
(x,\xi)\mapsto (x,\xi+s\lambda (f)_{x}),\quad x\in {K\cdot \iota(X_L)},~\xi \in T_{x}^{\ast}X_G,
\end{align*} 
we obtain 
\begin{align*}
\tau_{\ast}d{p}^{-1}(i_{\ast}CC(\widetilde{R\iota_{\ast}\mathcal{F}})+s d\log f)
&=\tau_{\ast}dp^{-1}\left(\sum_{K_L-\mathrm{orbits }~S_L~\mathrm{in}~X_L} \chi_{S_{L}}^{mic}(\mathcal{F})[\overline{T_{i(K\times_{K_L}\iota(S_{L}))}^{\ast}((K\times_{K_L}X_G)\times X_G)}]\right.\\
&\qquad\qquad\qquad \qquad\qquad\qquad\qquad\qquad\qquad \qquad\qquad\qquad\qquad\qquad\qquad+sd\log f\Bigg)\\
&=\sum_{K_L-\mathrm{orbits }~S_L~\mathrm{in}~X_L} \chi_{S_{L}}^{mic}(\mathcal{F})[\overline{T_{K\cdot\iota(S_L)}^{\ast}X_G}]+s\lambda(f).
\end{align*}
Therefore
\begin{align*}
CC\left(I_{L}^{G}\mathcal{F}\right)&=a_{\ast}(CC(\widetilde{R\iota_{\ast}\mathcal{F}}))\\
&=\overline{p}_{\ast}(j_{\ast}(i_{\ast}(CC(\widetilde{R\iota_{\ast}\mathcal{F}}))))\\
&=\lim_{s\rightarrow 0}\sum_{K_L-\mathrm{orbits }~S_L~\mathrm{in}~X_L} \chi_{S_{L}}^{mic}(\mathcal{F})[\overline{T_{K\cdot\iota(S_L)}^{\ast}X_G}]+s\lambda(f).
\end{align*}
Next, since 
for each $x\in a(K\times_{K_L}\iota(X_{L}))=K\cdot \iota(X_{L}) $ and every element $\lambda\in T_{x,K\cdot \iota(S_L)}^{\ast}X_G$,
we have $\lim_{s\rightarrow 0} \lambda+ s\lambda(f)|_{x}=\lambda,$
the restriction of $CC(I_{L}^{G}\mathcal{F})$ to 
$a(K\times_{K_L}\iota(X_{L}))$
must coincide with the cycle 
$$\sum_{K_L-\mathrm{orbits }~S_L~\mathrm{in}~X_L} \chi_{S_{L}}^{mic}(\mathcal{F})[\overline{T_{K\cdot\iota(S_L)}^{\ast}X_G}].$$ Consequently, if we  
write
$$CC_{\partial} =CC\left(I_{L}^{G}\mathcal{F}\right)-\sum_{K_L-\mathrm{orbits }~S_L~\mathrm{in}~X_L} \chi_{S_{L}}^{mic}(\mathcal{F})[\overline{T_{K\cdot\iota(S_L)}^{\ast}X_G}],$$
then the support $|CC_{\partial}|$ of the cycle $CC_{\partial}$ will be
contained in the inverse image of the intersection
$[(K\cdot \iota(X_{L}))^c~\cap~ \partial(K\cdot \iota(X_{L}))]$ 
in $T^{\ast}X_G$. Here $\partial(K\cdot \iota(X_{L}))$ denotes the boudary of $K\cdot \iota(X_{L})$ in the flag variety
$X_{G}$. By a similar argument to the one of (4.6c)\cite{SV}, we can moreover conclude 
that $|CC_{\partial}|$ is the union of the conormal bundles of a family of 
$K$-orbits in $[(K\cdot \iota(X_{L}))^c~\cap~ \partial(K\cdot \iota(X_{L}))]$.
This leads us to the desired equality
$$CC\left(I_{L}^{G}\mathcal{F}\right)=\sum_{K_L-\mathrm{orbits }~S_L~\mathrm{in}~X_L} \chi_{S_{L}}^{mic}(\mathcal{F})[\overline{T_{K\cdot\iota(S_L)}^{\ast}X_G}]+
\sum_{\substack{K-orbits~S\text{~in~}\\
(K\cdot \iota(X_{L}))^c~\cap~ \partial(K\cdot \iota(X_{L})) }}  \chi_{S}^{mic}(I_{L}^{G}\mathcal{F})[\overline{T_{S}^{\ast}X_G}].$$ 
\end{proof}
To end with this section notice that from (\ref{eq:diagramcycle1}) 
and Proposition \ref{prop:reduction2}(6),
we can by defining
\begin{align}
\left(I_{L}^{G}\right)_{\ast}:\mathscr{L}(X_{L},K_L)&\rightarrow  \mathscr{L}(X_{G},K)\\
\sum_{K_{L}-\text{orbits }S\text{ in }X_L}m_S[\overline{T_{S}^{\ast}X_L}]& \mapsto a_{\ast}\left(
\sum_{K_{L}-\text{orbits }S\text{ in }X_L}m_S[\overline{T_{K\times_{K_L}\iota(S)}^{\ast}(K\times_{K_L} X_G)}]\right)\nonumber
\end{align}
extend (\ref{eq:cdiagramme1}) to obtain the commutative diagram 
\begin{align*}
  \xymatrix{
   K\mathcal{M}(\mathfrak{g},K,I_{X_{G}}) \ar[r] & K(X_{G},K)\ar[r]^{CC} & \mathscr{L}(X_G,K)\\
    K\mathcal{M}(\mathfrak{l},K_{L},I_{X_{L}}) \ar[u]^{\mathscr{R}_{(\mathfrak{l},K_{L})}^{(\mathfrak{g},K)}}\ar[r] & K(X_{L},K_{L})\ar[u]^{I_{L}^{G}}\ar[r]^{CC} & \mathscr{L}(X_{L},K_L).\ar[u]^{\left(I_{L}^{G}\right)_{\ast}} 
     }
\end{align*}
\section{The Langlands Correspondence}
In this section we present the objects of \cite{ABV} required 
for the definition of micro-packets.
The section is a quick review 
of Chapters 2, 4, 5, 6 and  10 \cite{ABV}.
We begin by describing the context in which we will do representation theory. 
We recall the notion of extended group and of (projective) representation of a strong real form. Next, we 
introduce the dual objects that are going to parameterize the set of representations and replace the Langlands parameters in the Langlands classification, namely, the set of geometric parameters. Once
the geometric parameters have been introduced, we reformulate the local Langlands Correspondence 
in this new setting, and explain how it can be extended to include representations of a special type of covering group.\\

Following the philosophy of \cite{ABV}, in this 
article we are not going to fix a real form and study the corresponding set of representations. 
Instead, we fix an inner class of real forms and 
consider at the same time the set of representations of each real form in the inner class.
Extended groups have been introduced in \cite{ABV}, 
as a manner
to study and describe 
in an organized and uniform way, the representation theory corresponding 
to an inner class of real forms.\\

Let ${G}$ be a connected reductive complex algebraic group. Write 
$\Psi_{0}(G)=\left(X^{\ast},\Delta,X_{\ast},\Delta^{\vee}\right)$ for the based root datum of $G$ and set Aut$(\Psi_{0}(G))$ for its group of automorphism. 
 An \textbf{extended group containing} $G$ (see Definition 2.13 \cite{ABV}) is a real Lie group $G^{\Gamma}$
containing $G$ as a subgroup of index two. That is, there is a short exact sequence
\begin{align*}
1\rightarrow {G}\rightarrow {}G^{\Gamma}\rightarrow \Gamma\rightarrow 1,
\end{align*}
where $\Gamma=\mathrm{Gal}(\mathbb{C}/\mathbb{R})$,
such that every element of $G^{\Gamma}-G$ acts on $G$ as an antiholomorphic automorphism.
A \textbf{strong real form} of $G^{\Gamma}$ is an element  
$\delta\in G^{\Gamma}$ such that $\delta^{2}$ is central and has finite order. 
To each strong real form we associate a real form $\sigma(\delta)$ of $G$ defined by
conjugation by $\delta$.
The group of real points $G(\mathbb{R},\delta)$ of $\delta$ is defined to be the group of real points of $\sigma(\delta)$, namely $G(\mathbb{R},\delta)=G(\mathbb{R},\sigma_{\delta})$. Notice that what we call here an extended group is called a weak extended group in \cite{ABV}. Moreover, we have:
\begin{itemize}
\item From Proposition 2.14 \cite{ABV},
the set of real forms of $G$ associated to strong real forms of the extended group $G^{\Gamma}$ constitutes exactly one inner class of real forms of $G$. 
\item The extended group 
$G^{\Gamma}$ for $G$ 
may be characterized in terms of two invariants (see Corollary 2.16 \cite{ABV}): 
$$G^{\Gamma}\longrightarrow (a,\overline{z}).$$
The first invariant is an automorphism $a$ of the canonical based root datum $\Psi_{0}(G)$, which
is induced from conjugation by any element $\delta\in G^{\Gamma}-G$. 
To define the second invariant write $\theta_Z$ for the antiholomorphic involution of $Z(G)$ 
defined by the conjugation action of any element $\delta\in G^{\Gamma}-G$. 
Fix a quasisplit real form in the inner class defined by $a$, and chose $\delta_{q}$ 
so that $\sigma(\delta_{q})=\sigma_{q}$. Then the second invariant 
is the coset $\overline{z}\in Z(G)^{\sigma_Z}/(1+\sigma_Z)Z(G)$ of $\delta_{q}^{2}\in Z(G)^{\sigma_Z}$. The pair of invariants $(a,\overline{z})$ determines the weak extended group $G^{\Gamma}$ up to isomorphism. Furthermore, for any couple $(a,\overline{z})$ defined as before, 
there is a weak extended group $G^{\Gamma}$ with invariants $(a,\overline{z})$.
\end{itemize}

Let us recall now the notion of $E$-group, of (projective) representation of a strong real from of $G$ and introduce the dual objects that are going to parameterize them.

In order to extend 
the Langlands Correspondence to include projective representations of
some special type of covering group of $G$,
the role played by the $L$-groups in the descriptions of the representations 
must be replaced by the more general notion of $E$-group. 
The $E$-groups 
are introduced for the first time by Adams and Vogan in \cite{AV}.
$E$-groups will also be necessary 
to describe in Section 4.3 the Adams-Johnson packets.

Let ${}^{\vee}\Psi_{0}(G)=\left(X_{\ast},\Delta^{\vee},X^{\ast},\Delta\right)$ be the dual based root datum to $\Psi_{0}(G)$. We have
\begin{align}\label{eq:dualautomorphism1}
\text{Aut}(\Psi_{0}({G}))\cong\text{Aut}\left({}^{\vee}\Psi_{0}({G})\right).
\end{align} 
A \textbf{dual group} for $G$ is a complex connected reductive algebraic group ${}^{\vee}{G}$ whose
based root datum  is dual to the based root datum of $G$ (i.e.
$\Psi_{0}\left({}^{\vee}{G}\right)\cong{}^{\vee}\Psi_{0}({G})$).
A \textbf{weak $E$-group} for $G$ (see Definition 4.3 \cite{ABV}) is an algebraic group ${}^{\vee}G^{\Gamma}$ containing the dual group ${}^{\vee}{G}$ for $G$ 
as a subgroup of index two. That is, there is a short exact sequence
\begin{align*}
1\rightarrow {}^{\vee}{G}\rightarrow {}^{\vee}G^{\Gamma}\rightarrow \Gamma\rightarrow 1,
\end{align*}
where $\Gamma=\mathrm{Gal}(\mathbb{C}/\mathbb{R})$. Finally, an \textbf{$E$-group} for $G$ (see Definition 4.6, Definition 4.12 and Definition 4.14 \cite{ABV}) is a pair $\left({}^{\vee}G^{\Gamma},\mathcal{S}\right)$, where ${}^{\vee}G^{\Gamma}$ is a weak $E$-group for $G$ and $\mathcal{S}$ a conjugacy class of pairs $({}^{\vee}\delta,{}^{d}B)$ with ${}^{\vee}\delta$ an element of finite order in ${}^{\vee}G^{\Gamma}-{}^{\vee}{G}$ and ${}^{d}B$ a Borel subgroup of ${}^{\vee}G$, such that for any $\left({}^{\vee}\delta,{}^{d}B\right)\in \mathcal{S}$ conjugation by 
${}^{\vee}{\delta}$ is a \textbf{distinguished} involutive automorphism  of ${}^{\vee}G$ 
(see the definition after Proposition 2.11 \cite{ABV})
preserving ${}^{d}B$.

 Just as for extended groups, there is a classification of (weak) E-groups in terms of first and second
invariants (see Proposition 4.4 and Proposition 4.7 \cite{ABV}).  
\begin{itemize}
\item To any weak $E$-group ${}^{\vee}G^{\Gamma}$ for $G$ we can attach two invariants: 
$${}^{\vee}G^{\Gamma}\longrightarrow (a,\overline{z}).$$
Similar to extended groups, the first invariant is an automorphism $a\in \text{Aut}(\Psi_{0}({}^{\vee}G))(\cong \text{Aut}(\Psi_{0})(G))$, which
is induced from conjugation by any element ${}^{\vee}\delta\in {}^{\vee}G^{\Gamma}-{}^{\vee}G$. 
To define the second invariant write $\theta_Z$ for the holomorphic involution of $Z({}^{\vee}G)$ 
defined by the conjugation action of any element ${}^{\vee}\delta\in {}^{\vee}G^{\Gamma}-{}^{\vee}G$. 
Let ${}^{\vee}\delta$ be any element in ${}^{\vee}G^{\Gamma}-{}^{\vee}G$ such that the conjugation action of ${}^{\vee}\delta$ on ${}^{\vee}G$ is a distinguished automorphism (such element necessarily exist). Then the second invariant 
is the coset $\overline{z}\in Z({}^{\vee}G)^{\theta_Z}/(1+\theta_Z)Z({}^{\vee}G)$ of 
${}^{\vee}\delta^{2}\in Z({}^{\vee}G)^{\theta_Z}$. The pair of invariants $(a,\overline{z})$ determines the weak extended group $G^{\Gamma}$ up to isomorphism. Furthermore, for any couple $(a,\overline{z})$ defined as before, there is a weak $E$ group ${}^{\vee}G^{\Gamma}$ with invariants $(a,\overline{z})$. 
\item The invariants of the $E$-group are the automorphism $a$ attached to ${}^{\vee}G^{\Gamma}$. The
second invariant of an $E$-group is the canonical representative of the
second invariant $\overline{z}$ given by the square of any element $\delta\in \mathcal{S}$, i.e. 
$z={}^{\vee}\delta^{2}\in Z({}^{\vee}G)^{\theta_Z}$. 
Conversely, if ${}^{\vee}G^{\Gamma}$ is a weak $E$-group with invariants $(a,\overline{z})$ and $z$ is a representative for the class of $\overline{z}$, then there is an
$E$-group structure on ${}^{\vee}G^{\Gamma}$ with second invariant $z$.

\item An \textbf{$L$-group} for $G$ is an $E$-group whose second invariant is equal to $1$, that is to say $z={}^{\vee}\delta^{2}=1$.
\end{itemize}
Suppose now that $G^{\Gamma}$ is an extended group for $G$ with first invariant $a$. Then a (weak) $E$-group ${}^{\vee}G^{\Gamma}$ with first invariant $a$ will be called 
\textit{\textbf{a (weak)} $E$\textbf{-group for} $G^{\Gamma}$} or \textit{\textbf{a (weak)} $E$\textbf{-group for} $G$ \textbf{and the specified inner class of real forms}}.\\


To define the notion of projective representation for a strong real form of $G$, we need first to introduce the family of covering groups that is going to be of interest to us.

Let $G^{sc}$ be the simply-connected covering group of $G$ 
and write 
 $\pi_{1}(G)$ for the kernel of the covering map $G^{sc}\rightarrow G$. 
 Let $G^{can}$ be the \textbf{canonical covering} of $G$, namely, the projective limit of all
the \textbf{distinguished coverings} of $G$ (see Definition 10.1 \cite{ABV}). We notice that the kernel of each distinguished coverings may be seen as a quotient of $\pi_{1}(G)$. We have
$$1\rightarrow \pi_1(G)^{can}\rightarrow {G}^{can}\rightarrow G\rightarrow 1.$$
The group $\pi_1(G)^{can}$ is a pro-finite abelian group, the inverse limit of certain finite quotients of $\pi_{1}(G)$.
Now, if $\delta$ is a strong real form of $G^{\Gamma}$, let ${G}(\mathbb{R},\delta)^{can}$ be the preimage of ${G}(\mathbb{R},\delta)$ in
${G}^{can}$. Then there is a short exact sequence 
\begin{align}\label{eq:cancover}
1\rightarrow \pi(G)^{can}\rightarrow {G}(\mathbb{R},\delta)^{can}\rightarrow 
{G}(\mathbb{R},\delta)\rightarrow 1.
\end{align}
A \textbf{canonical projective representation of a strong real form} of $G^{\Gamma}$ (see Definition 2.13 and Definition 10.3 \cite{ABV})
is a pair $(\pi,\delta)$ in which: $\delta$ is a strong real form of $G^{\Gamma}$ and
$\pi$ is an admissible representation of $G^{can}(\mathbb{R},\delta)$.
Two such representations $(\pi,\delta)$, $(\pi',\delta')$ are said to be 
equivalent if there is an element $g\in G$ such that 
$g\delta g^{-1}=\delta'$ and $\pi\circ Ad(g^{-1})$ is (infinitesimally) equivalent to
$\pi'$.

Now, let $z\in Z({}^{\vee}G)$ be an element of finite order in $Z({}^{\vee}G)^{\theta_Z}$. By Lemma 10.2$(d)$ \cite{ABV}, 
$z$ determines a character $\chi_{z}:\pi_{1}(G)^{can}\rightarrow \mathbb{C}^{\times}$.  
 We say then that $(\pi,\delta)$ is of \textbf{type $z$} if the restriction of $\pi$ to 
$\pi_{1}(G)^{can}$ is a multiple of $\chi_z$. When $z=1$ the character $\chi_{z}$ is trivial  
and the projective representation of type $z$ are actual representations of $G(\mathbb{R}, \delta)$. 
Finally define
$$\Pi^{z}(G/\mathbb{R})$$
to be the set of infinitesimal equivalence classes of irreducible canonical projective representations
of type $z$ of strong real form of $G^{\Gamma}$. When $z=1$ we simply write $\Pi(G/\mathbb{R})$, this is the set of infinitesimal equivalence classes of irreducible representations of strong real form of $G^{\Gamma}$.\\

The local Langlands Correspondence (see \cite{Langlands} and \cite{Borel}), as originally conceived, is a bijection between ${}^{\vee}G$-orbits of $L$-parameters and $L$-packets.
The underlying novelty of \cite{ABV}, is the introduction of a topological space $X\left({}^{\vee}{G}^{\Gamma}\right)$ (see Definition 6.9 \cite{ABV}), called space of geometric parameters, which reparameterizes the set of $L$-parameters. In other terms, the authors define an space $X\left({}^{\vee}{G}^{\Gamma}\right)$ equipped with a ${}^{\vee}G$-action such that the ${}^{\vee}{G}$-orbits of $X\left({}^{\vee}{G}^{\Gamma}\right)$ are in bijection with ${}^{\vee}{G}$-orbits of $L$-parameters for ${}^{\vee}{G}^{\Gamma}$. Consequently, the local Langlands Correspondence can be stated as a bijection between ${}^{\vee}G$-orbits of $X\left({}^{\vee}{G}^{\Gamma}\right)$ and $L$-packets. Furthermore, Adams, Barbasch and Vogan refine the Langlands Correspondence to a bijection between (equivalence classes of) what they refer to as complete geometric parameters and (equivalence classes of) (projective) representations of strong real forms. Another important property of $X\left({}^{\vee}G^{\Gamma}\right)$, 
resides in its richer geometry when compared with the geometry of the space of Langlands parameters. 
Orbits are not necessary closed in $X\left({}^{\vee}{G}^{\Gamma}\right)$. This property is crucial in the definition of micro-packets.

Let us introduce the space of geometric parameters and give a quick review on some of their properties. We end the section with the formulation of the Local Langlands Correspondence as stated in \cite{ABV}.\\

Fix an extended group $G^{\Gamma}$ for $G$, and an $E$-group $\left({}^{\vee}{G}^{\Gamma},\mathcal{S}\right)$ for the corresponding inner class of real forms. Let $\lambda\in {}^{\vee}{\mathfrak{g}}$ be a semisimple element and set $\mathcal{O}=
{}^{\vee}G\cdot \lambda$. 
For every semisimple $\lambda\in{}^{\vee}\mathfrak{g}$, let 
$\mathfrak{n}(\lambda)$ be the positive integral eigenspaces of ad$(\lambda)$ (see Equation (6.1)(d)\cite{ABV}). We define  the \textbf{canonical flat through} $\lambda$ as the affine subspace
\begin{align*}
\Lambda=\lambda+\mathfrak{n}(\lambda)
\end{align*}
and set
\begin{align*}
\mathcal{F}(\mathcal{O})&=\text{set of canonical flats in }\mathcal{O}.
\end{align*}
This set encodes information about the restriction of $L$-parameters to the connected component of the Weil group of $\mathbb{R}$ (cf. Proposition 5.6 \cite{ABV}).
Write 
\begin{align}\label{eq:groupslambda}
{}^{\vee}{G}(\lambda)&=\text{centralizer in }{}^{\vee}{G}\text{ of }\exp(2\pi i\lambda),\\
L(\lambda)&=\text{centralizer in }{}^{\vee}{G}\text{ of }\lambda,\nonumber\\
N(\lambda)&=\text{connected unipotent subgroup with Lie algebra }\mathfrak{n}(\lambda),\nonumber\\
P(\lambda)&=L(\lambda)N(\lambda).\nonumber
\end{align}
Then $P(\lambda)$ is a parabolic subgroup of ${}^{\vee}{G}(\lambda)$ with Levi decomposition 
$P(\lambda)=L(\lambda)N(\lambda)$. Moreover, from Proposition 6.5 \cite{ABV} for every $\lambda'\in\Lambda=\lambda + \mathfrak{n}(\lambda)$ we have
${}^{\vee}{G}(\lambda')={}^{\vee}{G}(\lambda)$ and
${P}(\lambda')={P}(\lambda).$
Therefore, the definition of the groups ${}^{\vee}{G}(\lambda)$ and $P(\lambda)$ only depend on the canonical flat, 
and we can respectively define ${}^{\vee}{G}(\Lambda)={}^{\vee}{G}(\lambda)$ and $P(\Lambda)=P(\lambda)$.

Fix $\Lambda\in\mathcal{F}(\mathcal{O})$ and consider the sets
\begin{align*}
\mathcal{C}(\mathcal{O})=\{ge(\Lambda)g^{-1}:g\in {}^{\vee}{G}\}\quad\text{ and }\quad
\mathcal{I}(\mathcal{O})=\{y\in {}^{\vee}{G}^{\Gamma}-{}^{\vee}{G}:y^{2}\in\mathcal{C}(\mathcal{O})\}.
\end{align*}
This last set encodes information about the restriction of $L$-parameters to the non-connected component of the Weil group of $\mathbb{R}$.
For every ${}^{\vee}{G}$-orbit $\mathcal{O}$ of semi-simple elements in $\mathfrak{g}$,
the set of \textbf{geometric parameters (for $\mathcal{O}$)} is defined as 
\begin{align}\label{eq:Vofgeometricparameter}
X\left(\mathcal{O},{}^{\vee}{G}^{\Gamma}\right)=\mathcal{F}(\mathcal{O})\times_{\mathcal{C}(\mathcal{O})}\mathcal{I}(\mathcal{O}).
\end{align}
Since $X\left(\mathcal{O},{}^{\vee}{G}^{\Gamma}\right)$ is a fibre product, it carries a natural structure of a complex algebraic variety (see Proposition 6.16 \cite{ABV}). Now,
the set of all \textbf{geometric parameters} is the disjoint union (see definition 6.9 \cite{ABV}) 
\begin{align}\label{eq:VofgeometricparameterUnion}
X\left({}^{\vee}G^{\Gamma}\right)=\bigsqcup_{\mathcal{O}} X\left(\mathcal{O},{}^{\vee}G^{\Gamma}\right).
\end{align}
To apply the result on characteristic cycles as expressed in the previous section, we need to relate the variety of geometric parameters to a (generalized) flag variety, this is done in Proposition 6.16 \cite{ABV}. Let us sketch the result.
From Proposition 6.13 \cite{ABV} the set $\mathcal{I}(\mathcal{O})$
decomposes into a finite number of ${}^{\vee}{G}$-orbits. List the orbits as $\mathcal{I}_{1}(\mathcal{O}),\cdots,
\mathcal{I}_{r}(\mathcal{O})$ and for each $1\leq i\leq r$ choose a point
$$y_{i}\in \mathcal{I}_{i}(\mathcal{O})\quad\text{with}\quad y_{i}^{2}=e(\Lambda).$$
Then conjugation by $y_{i}$ defines an involutive automorphism of ${}^{\vee}{G}(\Lambda)$
with fixed point set denoted by $K_{y_i}$. 
 Therefore, $X\left(\mathcal{O},{}^{\vee}{G}^{\Gamma}\right)$ is the disjoint union of $r$-closed subvarieties
\begin{align}\label{eq:varieties}
X_{y_i}\left(\mathcal{O},{}^{\vee}{G}^{\Gamma}\right)=
\mathcal{F}(\mathcal{O})\times_{\mathcal{C}(\mathcal{O})}\mathcal{I}_{i}(\mathcal{O})
\end{align}
 and from Proposition 6.16 \cite{ABV} for each one of this varieties we have
\begin{align}\label{eq:varietiesisomor}
X_{y_i}\left(\mathcal{O},{}^{\vee}{G}^{\Gamma}\right)\cong {}^{\vee}{G}\times_{K_{y_i}}{}^{\vee}{G}(\Lambda)/P(\Lambda).
\end{align} 
In particular, the orbits of ${}^{\vee}{G}$ on $X_{y_i}\left(\mathcal{O},{}^{\vee}{G}^{\Gamma}\right)$ are in one-to-one correspondence 
with the orbits of $K_{y_i}$ on the partial flag variety 
${}^{\vee}{G}(\Lambda)/P(\Lambda)$ 
and from Proposition 7.14 \cite{ABV} 
this correspondence preserves their closure relations and the nature of the
singularities of closures. Furthermore, for each $i$ we have the equivalence of categories 
$\mathcal{P}(X_{y_{i}}\left(\mathcal{O},{}^{\vee}G^{\Gamma}\right),{}^{\vee}G)\cong \mathcal{P}({}^{\vee}G(\Lambda)/P(\lambda),K_{y_{i}})$, $\mathcal{D}(X_{y_{i}}\left(\mathcal{O},{}^{\vee}G^{\Gamma}\right),{}^{\vee}G)\cong \mathcal{D}({}^{\vee}G(\Lambda)/P(\lambda),K_{y_{i}})$.
From Proposition \ref{prop:reduction2}, the techniques described in section 2 apply then to the variety of geometric parameters. 
This is going to be important for the next section.\\ 

Now, let $\Phi\left({}^{\vee}G^{\Gamma}\right)$ be the set of ${}^{\vee}G$-orbits of \textbf{$L$-parameters} for ${}^{\vee}G^{\Gamma}$ (see \cite{Langlands} and Section 8 \cite{Borel}). As mentioned before, the ${}^{\vee}{G}$-orbits in $X\left({}^{\vee}G^{\Gamma}\right)$ are in bijection with 
$\Phi\left({}^{\vee}G^{\Gamma}\right)$ (see Proposition 6.17 \cite{ABV}). From this we can reformulate the local Langlands Correspondence as a bijection
\begin{align}\label{eq:cllc}
\left\{{}^{\vee}{G}-\text{orbits in }X\left({}^{\vee}G^{\Gamma}\right)\right\}\longleftrightarrow \Phi\left({}^{\vee}G^{\Gamma}\right)\xleftrightarrow{LLC} \{L-\text{packets of }\Pi^{z}(G/\mathbb{R})\}.
\end{align}
To refine the Local Langlands correspondence into a bijection with the set $\Pi^{z}(G/\mathbb{R})$, the authors of \cite{ABV} supplement each ${}^{\vee}{G}$-orbit in $X\left({}^{\vee}G^{\Gamma}\right)$ with the representation of a finite group, defining thereby, what they refer to as the set of \textbf{complete geometric parameters}.
More precisely, let  $x\in X\left({}^{\vee}{G}^{\Gamma}\right)$,  set 
${}^{\vee}{G}_{x}$ to be the isotropy group of $x$ and write 
${}^{\vee}{G}_{x}^{alg}$ for its preimage 
in the universal algebraic cover (see Definition 1.16 \cite{ABV}): 
$$1\rightarrow \pi_{1}\left({}^{\vee}G^{alg}\right)\rightarrow {}^{\vee}G^{alg}\rightarrow {}^{\vee}G\rightarrow 1.$$
The finite group in question is the \textbf{equivariant fundamental group} at $x$ (see Definition 7.1 and Definition 7.6 \cite{ABV})
$$A_{x}^{alg}={}^{\vee}{{G}}_{x}^{alg}/({}^{\vee}{{G}}_{x}^{alg})_{0}.$$
A \textbf{complete geometric parameter} for  ${}^{\vee}{G}^{\Gamma}$ 
is then a pair 
\begin{align}\label{eq:completegeometricparameter}
(x,\tau)
\end{align}
with 
$x\in X\left({}^{\vee}{G}^{\Gamma}\right)$ 
and $\tau$ an irreducible representation of $A_{\varphi}^{alg}$. We make ${}^{\vee}G^{alg}$ act on the set of complete Langlands parameter by conjugation and define $\Xi\left({}^{\vee}G^{\Gamma}\right)$ to be the set of conjugacy classes of complete geometric parameters for ${}^{\vee}G^{\Gamma}$ (see Definition 7.6 \cite{ABV}).
When we want to specify the choice of of an $E$-group $\left({}^{\vee}G^{\Gamma},\mathcal{S}\right)$ for $G$ with second invariant $z$ we write $\Xi^{z}\left({G}/\mathbb{R}\right)=\Xi\left({}^{\vee}{G}^{\Gamma}\right)$, or simply $\Xi\left({G}/\mathbb{R}\right)$ if ${}^{\vee}{G}^{\Gamma}$ is an $L$-group.
Similarly, we denote $\Phi^{z}\left({G}/\mathbb{R}\right)=\Phi\left({}^{\vee}{G}^{\Gamma}\right)$ 
when working with the set of ${}^{\vee}G$-conjugacy class of $L$-parameters (see Definition 5.11 \cite{ABV}).  
Finally, for each ${}^{\vee}{G}$-orbit $\mathcal{O}$ of semi-simple
elements in $\mathfrak{g}$, we define
$\Xi\left(\mathcal{O},{}^{\vee}G^{\Gamma}\right)$ as the subset of
$\Xi\left({}^{\vee}{G}^{\Gamma}\right)$ consisting of 
${}^{\vee}G$-orbits ${}^{\vee}G\cdot (x,\tau)$ with $x\in X\left(\mathcal{O},{}^{\vee}{G}^{\Gamma}\right)$ and denote $\Xi^{z}\left(\mathcal{O},{G}/\mathbb{R}\right)=\Xi\left(\mathcal{O},{}^{\vee}{G}^{\Gamma}\right)$ in the case that we want to make the choice of the $E$-group with second invariant $z$ explicit. 


We can now state the Local Langlands Correspondence (see Theorem 10.4 \cite{ABV}).
\begin{theo}[\textbf{Local Langlands Correspondence for $E$-groups}]\label{theo:4.1}
Suppose $G^{\Gamma}$
is an extended group for $G$, and $({}^{\vee}{G}^{\Gamma},\mathcal{S})$ is a $E$-group for the corresponding inner class of real forms. Write $z$ for the second invariant of
the $E$-group. Then there is a natural bijection between the set $\Pi^{z}({G}/\mathbb{R})$ 
of equivalence classes of canonical irreducible projective representations of strong real forms of ${G}$ of type $z$
and the set $\Xi^{z}({G}/\mathbb{R})$ of complete geometric parameters 
for ${}^{\vee}{G}^{\Gamma}$.   
\end{theo}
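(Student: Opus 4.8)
The strategy is the one carried out in Chapters 6--10 of \cite{ABV}: to organize both sides of the asserted bijection around a single auxiliary geometric object and then to match the resulting combinatorial parametrizations.

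First I would reduce to a fixed infinitesimal character. Both $\Pi^{z}(G/\mathbb{R})$ and $\Xi^{z}(G/\mathbb{R})$ decompose, compatibly, as disjoint unions indexed by semisimple ${}^{\vee}G$-orbits ${}^{\vee}\mathcal{O}$ in the Lie algebra of ${}^{\vee}G$ --- on the parameter side by the infinitesimal character attached to $\varphi$, on the representation side by the Harish-Chandra infinitesimal character --- so it suffices to produce a natural bijection on each piece. Fix such an orbit ${}^{\vee}\mathcal{O}$, and bring in the associated geometric parameter space $X({}^{\vee}\mathcal{O},{}^{\vee}G^{\Gamma})$ of Chapter 6 of \cite{ABV}, a smooth variety on which ${}^{\vee}G$ acts with finitely many orbits. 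The first, essentially group-theoretic, step is that passing from a Langlands parameter $\varphi$ to its orbit of geometric data gives a bijection between ${}^{\vee}G$-conjugacy classes of Langlands parameters with infinitesimal character ${}^{\vee}\mathcal{O}$ and ${}^{\vee}G$-orbits on $X({}^{\vee}\mathcal{O},{}^{\vee}G^{\Gamma})$, under which the universal component group $A_{\varphi}^{alg}$ of Definition \ref{deftn:completeLanglandsparameter} is identified with the equivariant fundamental group of the corresponding orbit. Granting this, the piece of $\Xi^{z}(G/\mathbb{R})$ lying over ${}^{\vee}\mathcal{O}$ is identified with the set of pairs (orbit $S$, irreducible representation of the equivariant fundamental group of $S$) --- equivalently, with the canonical basis of the relevant Grothendieck group of equivariant perverse sheaves on $X({}^{\vee}\mathcal{O},{}^{\vee}G^{\Gamma})$ given by intersection-cohomology extensions, the ``type $z$'' normalization being recorded in the choice of $E$-group.

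The deeper input is the Langlands classification in the form valid simultaneously for all strong real forms in the inner class and for the canonical covers $G^{can}$: the irreducible canonical representations of strong real forms of type $z$ with infinitesimal character matching ${}^{\vee}\mathcal{O}$ are parametrized --- through standard modules and their Langlands quotients --- by exactly the same combinatorial data, pairs consisting of an orbit datum and an irreducible representation of the attached component group. This is where the work of Langlands, Vogan and Adams--Barbasch--Vogan enters; the bookkeeping with $\pi_{1}(G)^{can}$ and the isomorphism of Lemma 10.2 of \cite{ABV} between elements of finite order in $Z({}^{\vee}G)^{\theta_Z}$ and characters of $\pi_{1}(G)^{can}$ is precisely what makes the ``type $z$'' constraint on representations correspond to the second invariant $z$ of the $E$-group on the dual side. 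With both sides now described by the identical index set, the bijection is obtained by matching indices, after which one checks that it is independent of the auxiliary choices (the element ${}^{\vee}\delta$, the compactifications, the orbit representatives) and natural in $G^{\Gamma}$.

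The main obstacle is this last step: showing that the representation-theoretic side is genuinely governed by the geometry of $X({}^{\vee}\mathcal{O},{}^{\vee}G^{\Gamma})$ --- equivalently, proving the multiplicity formulas relating standard and irreducible modules to intersection cohomology on the dual geometric parameter space, the Kazhdan--Lusztig picture transported to the dual side. The reduction to fixed infinitesimal character and the combinatorial dictionary of the two preceding paragraphs are formal; the real content lies in identifying blocks of representations with equivariant perverse sheaves on $X({}^{\vee}\mathcal{O},{}^{\vee}G^{\Gamma})$.
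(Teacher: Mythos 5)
First, note that the paper itself gives no proof of this statement: it is quoted verbatim as Theorem 10.4 of \cite{ABV}, so the only thing to compare your sketch with is the argument actually carried out there. Measured against that, your proposal has the right general furniture but a misplaced center of gravity. The detour through the geometric parameter space $X({}^{\vee}\mathcal{O},{}^{\vee}G^{\Gamma})$ is the \emph{reformulation} of the correspondence (Chapter 6--7 of \cite{ABV}, Section 6 of this paper, where one shows ${}^{\vee}G$-orbits on $X({}^{\vee}\mathcal{O},{}^{\vee}G^{\Gamma})$ biject with classes of Langlands parameters and $A_{x}^{alg}=A_{\varphi}^{alg}$); it is not an ingredient of the bijection $\Pi^{z}(G/\mathbb{R})\leftrightarrow\Xi^{z}(G/\mathbb{R})$, which in \cite{ABV} is established directly for complete Langlands parameters $(\varphi,\tau)$. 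More seriously, the step you single out as ``the real content'' --- the Kazhdan--Lusztig-type identification of multiplicities of irreducibles in standard modules with intersection cohomology on the dual parameter space --- is neither needed for nor sufficient to prove this theorem; that machinery enters later, for the character identities, duality and the micro-packet constructions, not for the classification bijection.

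The genuine content of Theorem 10.4 is exactly the piece you assume as an input: ``the Langlands classification in the form valid simultaneously for all strong real forms in the inner class and for the canonical covers, parametrized by the same combinatorial data.'' Asserting this is essentially restating the theorem. What \cite{ABV} actually does is prove it: reduction via standard modules and Langlands quotients to the tempered case, the parametrization of tempered $L$-packets after Langlands, Knapp--Zuckerman and Shelstad, and then the two refinements that are special to this setting --- (i) passing from a single real form to all strong real forms of $G^{\Gamma}$ at once, which is what upgrades characters of $A_{\varphi}$ to representations of the full $A_{\varphi}^{alg}$, and (ii) the covering/type-$z$ bookkeeping, where Lemma 10.2 of \cite{ABV} (elements of finite order in $Z({}^{\vee}G)^{\theta_Z}$ versus characters of $\pi_{1}(G)^{can}$) matches the constraint ``type $z$'' on representations with the second invariant of the $E$-group. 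Your sketch mentions these points only in passing and defers them wholesale, while locating the difficulty in the perverse-sheaf multiplicity theory; as a proof outline for this particular statement it therefore has a gap precisely where the theorem lives.
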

We notice that, when $\left({}^{\vee}{G}^{\Gamma},\mathcal{S}\right)$ is an $L$-group for $G$
the projective representations in the Correspondence
are actual representations of strong real forms of ${G}$.
Moreover, in this parameterization, the set of representations of a fixed real form ${G}(\mathbb{R},\delta)$ corresponding to complete Langlands parameter supported on a single orbit is precisely 
the $L$-packet for ${G}(\mathbb{R},\delta)$ attached to that orbit.
\section{Micro-packets}
In this section we give a quick review on micro-packets. For a more complete exposition on the subject see Chapters 7, 19 and 22 of \cite{ABV}.\\

For all of this section, let $G^{\Gamma}$ be an extended group for $G$, 
and let $\left({}^{\vee}{G}^{\Gamma},\mathcal{S}\right)$ be an $E$-group 
 for the corresponding inner class of real forms. Suppose $(x,\tau)$ is a complete geometric parameter for ${}^{\vee}{G}^{\Gamma}$. 
Write 
$V_{\tau}$ for the space of $\tau$ and 
$S_{x}={}^{\vee}{{G}}\cdot x$ for the corresponding ${}^{\vee}{G}$-orbit on $X\left({}^{\vee}{G}^{\Gamma}\right)$. 
From  Lemma 7.3 \cite{ABV}, by regarding $\tau$ as a representation of ${}^{\vee}{{G}}_{x}^{alg}$ 
trivial on $({}^{\vee}{{G}}_{x}^{alg})_{0}$, the induced bundle
$$\mathcal{V}_{x,\tau}:={}^{\vee}{{G}}^{alg}\times_{{}^{\vee}{{G}}_{x}^{alg}}V_{\tau}\rightarrow S_{x},$$
carries a ${}^{\vee}{G}^{alg}$-invariant flat connection. Therefore, 
$\mathcal{V}_{x,\tau}$ defines an irreducible ${}^{\vee}{G}^{alg}$-equivariant local system on $S_x$. 
Moreover, by Lemma 7.3(e) \cite{ABV}, 
the map
\begin{align}\label{e{appendices}q:bijectionLangGeo}
(x,\tau)\mapsto \xi_{x,\tau}:=(S_{x},\mathcal{V}_{x,\tau})
\end{align}
induces a bijection between $\Xi\left({}^{\vee}G^{\Gamma}\right)$ the set of equivalence classes of complete geometric parameters on $X\left({}^{\vee}G^{\Gamma}\right)$,
and the set of couples $(S,\mathcal{V})$ where $S$ is an orbit of ${}^{\vee}{G}$ on $X\left({}^{\vee}G^{\Gamma}\right)$ 
and $\mathcal{V}$ an irreducible ${}^{\vee}{G}^{alg}$-equivariant local system on $S$. 
Following this bijection, the set of couples $(S,\mathcal{V})$ will also be denoted 
$\Xi\left({}^{\vee}G^{\Gamma}\right)$ and
for each ${}^{\vee}{G}$-orbit $\mathcal{O}$ of semi-simple elements in $\mathfrak{g}$, we identify $\Xi\left(\mathcal{O},{}^{\vee}{G}^{\Gamma}\right)$ with the set of couples $(S,\mathcal{V})$ with $S\subset X\left(\mathcal{O},{}^{\vee}{G}^{\Gamma}\right)$.\\

Let $\mathcal{O}$ be a ${}^{\vee}{G}$-orbit of semisimple elements in ${}^{\vee}\mathfrak{g}$. 
As in Section 2, write
$$\mathcal{P}\left(X\left(\mathcal{O},{}^{\vee}{G}^{\Gamma}\right),{}^{\vee}{{G}}^{alg}\right)$$
for the category of ${}^{\vee}{{G}}^{alg}$-equivariant perverse sheaves 
on $X\left(\mathcal{O},{}^{\vee}{G}^{\Gamma}\right)$. We 
define 
$\mathcal{P}\left(X\left({}^{\vee}{G}^{\Gamma}\right), {}^{\vee}{{G}}^{alg}\right)$ 
 to be the direct sum over semisimple orbits $\mathcal{O}\subset {}^{\vee}{\mathfrak{g}}$ 
of the categories $\mathcal{P}\left(X\left(\mathcal{O},{}^{\vee}{G}^{\Gamma}\right),{}^{\vee}{{G}}^{alg}\right)$. 
The last necessary 
step before the introduction of
micro-packets, is to explain 
how the irreducible objects in $\mathcal{P}\left(X\left({}^{\vee}{G}^{\Gamma}\right),{}^{\vee}{{G}}^{alg}\right)$ 
are parameterized by the set $\Xi\left({}^{\vee}{G}^{\Gamma}\right)$ of equivalence classes of complete geometric parameters. Fix $\xi=(S,\mathcal{V})\in \Xi\left(\mathcal{O},{}^{\vee}{G}^{\Gamma}\right)$. 
Write
$$j:S\rightarrow \overline{S},$$
for the inclusion of $S$ in its closure and
$$i:\overline{S}\rightarrow X\left(\mathcal{O},{}^{\vee}{G}^{\Gamma}\right),$$ 
for the inclusion of the closure of $S$ in $X(\mathcal{O},{}^{\vee}{G}^{\Gamma})$.
Let $d=d(S)$ be the dimension of $S$. If we regard the local system $\mathcal{V}$ as
a constructible sheaf on $S$, the complex $\mathcal{V}[-d]$, consisting of the single sheaf $\mathcal{V}$ 
in degree $-d$ defines an ${}^{\vee}{{G}}^{alg}$-equivariant perverse sheaf on $S$. 
Applying to it the intermediate extension functor 
$j_{!\ast}$, followed by the direct image $i_{\ast}$, 
we get an irreducible perverse sheaf on  $X(\mathcal{O},{}^{\vee}{G}^{\Gamma})$
\begin{align}\label{eq:irredPerv}
P(\xi)=i_{\ast}j_{!\ast}\mathcal{V}[-d],
\end{align}
the perverse extension of $\xi$.
 That every irreducible ${}^{\vee}{G}^{alg}$-equivariant perverse sheaf on $X({}^{L}G)$ is of this form, follows from Theorem 4.3.1 \cite{BBD}. 
The set
$\left\{P(\xi):\xi\in \Xi\left({}^{\vee}{G}^{\Gamma}\right)\right\}$ 
is therefore a base of the Grothendieck group $KX\left({}^{\vee}{G}^{\Gamma}\right)$. We can finally give the
definition of a micro-packet.
\begin{deftn}[Definition 19.13 \cite{ABV}]\label{deftn:micropacket}
Let $S$ be a ${}^{\vee}{G}$-orbit in $X\left({}^{\vee}{G}^{\Gamma}\right)$. To every complete geometric parameter
$\xi\in\Xi\left({}^{\vee}{G}^{\Gamma}\right)$ we have attached in (\ref{eq:irredPerv}) a perverse sheaf $P(\xi)$. 
From Proposition 19.12 \cite{ABV}, the conormal bundle $T^{\ast}_{S}\left(X({}^{\vee}{G}^{\Gamma})\right)$ has a non-negative integral multiplicity
$\chi_{S}^{mic}(\xi)$ in the characteristic cycle $CC(P(\xi))$ of $P(\xi)$. 
We define the \textbf{micro-packet of geometric parameters} attached to $S$, to be the set of complete geometric parameters for which this multiplicity is non-zero
\begin{align*}
\Xi\left({}^{\vee}{G}^{\Gamma},{}^{\vee}{G}\right)^{mic}_{S}=\left\{\xi\in\Xi\left({}^{\vee}{G}^{\Gamma}\right):\chi_{S}^{mic}(\xi)\neq 0\right\}.
\end{align*}
\end{deftn}

\begin{deftn}[Definition 19.15 \cite{ABV}]\label{deftn:micropacket}
Suppose 
$\left({}^{\vee}{G}^{\Gamma},\mathcal{S}\right)$ is an $E$-group for $G$ 
 with second invariant $z$.
Let $\varphi\in\Phi^{z}({G}/\mathbb{R})$ be an equivalence class of Langlands parameters for ${}^{\vee}{G}^{\Gamma}$ and write $S_{\varphi}$ for the corresponding orbit of ${}^{\vee}{{G}}$ in $X\left({}^{\vee}{G}^{\Gamma}\right)$ (see Equation (\ref{eq:cllc})).
Then we define the micro-packet of geometric parameters attached to $\varphi$ as
\begin{align*}
\Xi^{z}({G}/\mathbb{R})_{\varphi}=\Xi\left(X\left({}^{\vee}{G}^{\Gamma}\right),{}^{\vee}{G}\right)^{mic}_{S_{\varphi}}.
\end{align*}
For any complete parameter $\xi\in \Xi^{z}\left({G}/\mathbb{R}\right)$, let $\pi(\xi)$ 
be the representation in $\Pi^{z}({G}/\mathbb{R})$ 
associated to $\xi$ by Theorem \ref{theo:4.1}. 
Then the \textbf{micro-packet} of $\varphi$ is defined as  
\begin{align*}
\Pi^{z}({G}/\mathbb{R})^{mic}_{\varphi}=\{\pi(\xi'):\xi'\in \Xi^{z}({G}/\mathbb{R})^{mic}_{\varphi}\}.
\end{align*}
Finally, let $\delta$ be a strong real form of $G^{\Gamma}$, then we define the restriction of 
$\Pi({G}/\mathbb{R})^{mic}_{\varphi}$ to $\delta$ as
\begin{align*}
\Pi^{z}({G}(\mathbb{R},\delta))^{mic}_{\varphi}=\{\pi\in \Pi^{z}({G}/\mathbb{R})^{mic}_{\varphi}: \pi \text{ is a representation of }G(\mathbb{R},\delta) \}.
\end{align*} 
\end{deftn}
We notice that the Langlands packet attached to a Langlands parameter $\varphi$ is always contained in the corresponding micro-packet
\begin{align}\label{eq:contentionoflpackets}
\Pi^{z}({G}/\mathbb{R})_{\varphi}\subset\Pi^{z}({G}/\mathbb{R})^{mic}_{\varphi}.
\end{align}
This is a consequence of $(ii)$ of the following lemma. Point $(i)$ of the lemma  
will show to be quite useful later in this section. For a proof, see Lemma 19.14 \cite{ABV}.
\begin{lem}\label{lem:lemorbit}
Let $\eta=(S,\mathcal{V})$ and $\eta'=\left(S',\mathcal{V}'\right)$ be two geometric parameters for 
${}^{\vee}{G}^{\Gamma}$. 
\begin{enumerate}[i.]
\item If $\eta'\in \Xi\left(X\left({}^{\vee}{G}^{\Gamma}\right),{}^{\vee}{G}\right)_{S}^{mic}$, then $S\subset \overline{S}'$.
\item If $S'=S$, then $\eta'\in \Xi\left(X\left({}^{\vee}{G}^{\Gamma}\right),{}^{\vee}{G}\right)_{S}^{mic}$. 
\end{enumerate}
\end{lem}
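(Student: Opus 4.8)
The plan is to deduce both parts from the structure of the irreducible perverse sheaf $P(\eta')=i_{\ast}j_{!\ast}\mathcal{V}'[-d(S')]$ attached to $\eta'=(S',\mathcal{V}')$ in (\ref{eq:irredPerv}), together with the description of the support of a $\mathcal{D}$-module in Theorem \ref{theo:defcc}$(iii)$. The single auxiliary fact I will use is that intermediate extension preserves full support: the perverse sheaf $\mathcal{V}'[-d(S')]$ on $S'$ has support $S'$ (as $\mathcal{V}'$ is a nonzero local system), its intermediate extension $j_{!\ast}\mathcal{V}'[-d(S')]$ is a simple perverse sheaf on $\overline{S'}$ with support $\overline{S'}$, and since $i$ is a closed immersion we get $\mathrm{Supp}(P(\eta'))=\overline{S'}$. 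Recall from Definition \ref{deftn:ccycleperverse} that $CC(P(\eta'))=CC(D(\eta'))$, where $D(\eta')$ is a ${}^{\vee}{G}^{alg}$-equivariant coherent $\mathcal{D}$-module on the smooth variety $X\left(\mathcal{O},{}^{\vee}{G}^{\Gamma}\right)$, on which ${}^{\vee}{G}$ acts with finitely many orbits; hence Theorem \ref{theo:defcc} is applicable, and by construction $\chi_{S}^{mic}(\eta')$ is exactly the multiplicity $\chi_{S}^{mic}(P(\eta'))$ of $\overline{T_{S}^{\ast}X\left({}^{\vee}{G}^{\Gamma}\right)}$ in $CC(P(\eta'))$.

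For part $(i)$, assume $\eta'\in\Xi\left(X\left({}^{\vee}{G}^{\Gamma}\right),{}^{\vee}{G}\right)_{S}^{mic}$, i.e. $\chi_{S}^{mic}(P(\eta'))\neq 0$. Then $\overline{S}$ is one of the closed subsets occurring in the union of Theorem \ref{theo:defcc}$(iii)$, so
$$\overline{S}\ \subset\ \bigcup_{\chi_{S''}^{mic}(P(\eta'))\neq 0}\overline{S''}\ =\ \mathrm{Supp}(P(\eta'))\ =\ \overline{S'},$$
and since $S\subset\overline{S}$ this yields $S\subset\overline{S'}$, as claimed.

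For part $(ii)$, assume $S'=S$. Then $\mathrm{Supp}(P(\eta'))=\overline{S'}=\overline{S}$, which is irreducible of dimension $d(S)$, being the closure of a single orbit of the connected group ${}^{\vee}{G}$. By Theorem \ref{theo:defcc}$(iii)$, $\overline{S}$ equals the union of the finitely many irreducible closed sets $\overline{S''}$ with $\chi_{S''}^{mic}(P(\eta'))\neq 0$; as $\overline{S}$ is irreducible, at least one of them must be all of $\overline{S}$. If an orbit $S''$ satisfies $\overline{S''}=\overline{S}$, then $S''=S$, because $S$ and $S''$ are both open dense in $\overline{S}$ (orbits of an algebraic group action being locally closed) and distinct orbits are disjoint. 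Therefore $\chi_{S}^{mic}(\eta')=\chi_{S}^{mic}(P(\eta'))\neq 0$, i.e. $\eta'\in\Xi\left(X\left({}^{\vee}{G}^{\Gamma}\right),{}^{\vee}{G}\right)_{S}^{mic}$.

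The argument is essentially bookkeeping; there is no serious obstacle, the only genuine input being the identification $\mathrm{Supp}(P(\eta'))=\overline{S'}$, which rests on the standard property of intermediate extensions. As an alternative for $(ii)$ one could instead compute the multiplicity of $\overline{T_{S}^{\ast}X\left({}^{\vee}{G}^{\Gamma}\right)}$ directly, by restricting $P(\eta')$ to a neighbourhood of a generic point of $S'$, where it coincides with the shifted local system $\mathcal{V}'[-d(S')]$ on the smooth locus of its support and hence contributes $\dim\mathcal{V}'\ge 1$ to the characteristic cycle; but the support argument above is cleaner and avoids tracking shift and sign conventions for characteristic cycles of non-perverse complexes. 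Note finally that $(ii)$ in the case $S=S'$, together with $(i)$, immediately yields the containment (\ref{eq:contentionoflpackets}) of Langlands packets inside micro-packets, as asserted in the text preceding the lemma.
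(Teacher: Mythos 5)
Your argument is correct: the identification $\mathrm{Supp}(P(\eta'))=\overline{S'}$ for the intermediate extension, combined with Theorem \ref{theo:defcc}$(iii)$ and the irreducibility of an orbit closure of the connected group ${}^{\vee}{G}$, does yield both $(i)$ and $(ii)$. The paper itself gives no proof but refers to Lemma 19.14 of \cite{ABV}, whose argument is essentially the same support-and-generic-multiplicity reasoning you use (with $(ii)$ there seen via the generic multiplicity along $S$ being the rank of $\mathcal{V}'$, which is your stated alternative), so your proposal matches the intended route.
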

We are going to be mostly interested in the case of micro-packets attached to
Langlands parameters coming from Arthur parameters. 
These types of micro-packets, are going to be called Adams-Barbasch-Vogan packets or simply ABV-packets. 
 We continue by recalling the definition of an Arthur parameter.

An \textbf{Arthur parameter} is a homomorphism
\begin{align*}
\psi:W_{\mathbb{R}}\times\textbf{SL}(2,\mathbb{C})\longrightarrow {}^{\vee}{G}^{\Gamma},
\end{align*}
where $W_{\mathbb{R}}$ denotes the Weil group of $\mathbb{R}$ (i.e. $W_{\mathbb{R}}=\mathbb{C}^{\times}\cup j\mathbb{C}^{\times}$ with $j^{2}=-1$ and $jzj^{-1}=\bar{z}$), satisfying
\begin{itemize}
\item The restriction of $\psi$ to $W_{\mathbb{R}}$ is a tempered Langlands parameter (i.e.
the closure of $\psi(W_{\mathbb{R}})$ in the analytic topology is compact).
\item The restriction of $\psi$ to $\mathbf{SL}(2,\mathbb{C})$ is holomorphic.
\end{itemize}
Two such parameters are called equivalent if they are conjugate by the action of ${}^{\vee}{{G}}$. The set of equivalences classes is written $\Psi\left({}^{\vee}{G}^{\Gamma}\right)$
or $\Psi^{z}({G}/\mathbb{R})$, when we want to specify that 
${}^{\vee}{G}^{\Gamma}$ is an $E$-group for $G$ with second invariant $z$.
To every Arthur parameter $\psi$, we can associate a Langlands parameter $\varphi_{\psi}$, by
the following formula (see Section 4 \cite{Arthur89})
\begin{align*}
\varphi_{\psi}&:W_{\mathbb{R}}\longrightarrow {}^{\vee}G^{\Gamma},\\
 \varphi_{\psi}(w)&=\psi\left(w,\left(\begin{array}{cc}
|w|^{1/2}& 0\\
             0& |w|^{-1/2}
\end{array}\right)\right).
\end{align*}
Now, to $\varphi_{\psi}$ correspond an orbit $S_{\varphi_{\psi}}$ of ${}^{\vee}{G}$ on $X\left({}^{\vee}{G}^{\Gamma}\right)$. We define
\begin{align*}
S_{\psi}=S_{\varphi_{\psi}}.
\end{align*}
\begin{deftn}\label{deftn:abvpackets}
Suppose 
$\left({}^{\vee}{G}^{\Gamma},\mathcal{S}\right)$ is an 
$E$-group for $G$ 
with second invariant $z$.
Let $\psi\in\Psi^{z}({G}/\mathbb{R})$ be an Arthur parameter. We define the Adams-Barbasch-Vogan packet $\Pi^{z}({G}/\mathbb{R})_{\psi}^{\mathrm{ABV}}$ 
of $\psi$, as the micro-packet of the Langlands parameter $\varphi_{\psi}$ attached to $\psi$:
\begin{align*}
\Pi^{z}({G}/\mathbb{R})_{\psi}^{\mathrm{ABV}}=\Pi^{z}({G}/\mathbb{R})_{\varphi_{\psi}}^{mic}.
\end{align*}
\end{deftn}
In other words, $\Pi^{z}({G}/\mathbb{R})_{\psi}^{\mathrm{ABV}}$ is the
set of all irreducible representations with the 
property that the corresponding irreducible perverse sheaf contains the conormal bundle $T_{S_{\psi}}^{\ast}\left(X\left({}^{\vee}{G}^{\Gamma}\right)\right)$ 
in its characteristic cycle.

Micro-packets attached to Arthur parameters satisfy the following important properties.

\begin{theo}\label{theo:abvproperties}
Let $\psi$ be an Arthur parameter for ${}^{\vee}{G}^{\Gamma}$. 
\begin{enumerate}[i.]
\item $\Pi^{z}(G/\mathbb{R})_{\psi}^{mic}$ contains the $L$-packet $\Pi^{z}(G/\mathbb{R})_{\varphi_{\psi}}$.
\item $\Pi^{z}(G/\mathbb{R})_{\psi}^{mic}$ is the support of a stable formal virtual character:
$$\eta_{\psi}^{mic}=\sum_{\xi\in\Xi({G}/\mathbb{R})_{\varphi_{\psi}}}e(\xi)(-1)^{d(S_{\xi})-d(S_{\psi})}\chi_{S_{\xi}}^{mic}(P(\xi))\pi(\xi).$$
where for each orbit $S$, $d(S)$ is the dimension of the orbit and $e(\xi)$ the Kottwitz sign attached to the real form of which $\pi(\xi)$ is a representation. 
\item $\Pi^{z}(G/\mathbb{R})_{\psi}^{mic}$ satisfies the ordinary endoscopic identities predicted by the theory of endoscopy. 
\end{enumerate}
\end{theo}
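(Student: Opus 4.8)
The plan is to treat the three assertions separately; (i) is a formal consequence of the machinery assembled above, while (ii) and (iii) are the principal theorems of \cite{ABV} and will be deduced from results there. For (i) I would unwind the definitions. By Definition \ref{deftn:abvpackets} the packet $\Pi^{z}(G/\mathbb{R})_{\psi}^{mic}$ is the micro-packet attached to the orbit $S_{\psi}=S_{\varphi_{\psi}}$, and the $L$-packet $\Pi^{z}(G/\mathbb{R})_{\varphi_{\psi}}$ consists precisely of the $\pi(\xi)$ with complete geometric parameter $\xi=(S_{\varphi_{\psi}},\mathcal{V})$ supported on the single orbit $S_{\varphi_{\psi}}$. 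For any such $\xi$ the intersection cohomology extension $P(\xi)$ contains the conormal bundle $\overline{T^{\ast}_{S_{\varphi_{\psi}}}X({}^{\vee}G^{\Gamma})}$ with multiplicity one in its characteristic cycle; this is exactly Lemma \ref{lem:lemorbit}$(ii)$, so $\chi^{mic}_{S_{\varphi_{\psi}}}(\xi)\neq 0$ and $\xi$ lies in the micro-packet of geometric parameters. Passing to representations through Theorem \ref{theo:4.1} yields $\Pi^{z}(G/\mathbb{R})_{\varphi_{\psi}}\subset\Pi^{z}(G/\mathbb{R})_{\psi}^{mic}$, which is (\ref{eq:contentionoflpackets}).

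For (ii) I would take the displayed formal virtual character $\eta^{mic}_{\psi}$ as its definition and appeal to the microlocal stability theorem of Adams--Barbasch--Vogan (the main results of Chapters 19 and 22 of \cite{ABV}). The point is that, up to the Kottwitz signs $e(\xi)$ and the parity factors, the coefficients of $\eta^{mic}_{\psi}$ are the microlocal multiplicities of the Arthur orbit $S_{\psi}$ in the characteristic cycles of the irreducible perverse sheaves $P(\xi)$, which by Theorem \ref{theo:defcc} are read off from the conormal-bundle components of $CC(P(\xi))$; under Vogan duality these are matched, entry by entry, with the coefficients of the stable virtual character built in \cite{ABV} on the representation-theoretic side. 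I expect the main obstacle in any self-contained treatment to be precisely this step: the identification of $\chi^{mic}$ with the microlocal Euler-characteristic pairing and the passage through duality to the stability statement form the technical core of \cite{ABV}, which I would not attempt to reproduce.

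For (iii) I would quote the endoscopy results of \cite{ABV} (Chapter 26): the ordinary endoscopic identities for $\eta^{mic}_{\psi}$ follow from the functoriality of the space of geometric parameters under $L$-homomorphisms recorded in Proposition \ref{prop:mapvarieties}, together with the behaviour of characteristic cycles under the associated closed immersions, which identifies the microlocal multiplicities attached to $\psi$ with those attached to the Arthur parameters obtained by composing $\psi$ with the relevant endoscopic $L$-homomorphisms. As in (ii), the substance lies entirely in \cite{ABV}; the purpose of the present statement is only to record these properties in the form in which Sections 7.1--7.3 will use them.
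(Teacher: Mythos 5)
Your proposal matches the paper's treatment: point (i) is obtained exactly as you do, from Lemma \ref{lem:lemorbit}$(ii)$ applied to complete geometric parameters supported on $S_{\varphi_{\psi}}$ (note the lemma only gives $\chi^{mic}_{S_{\varphi_{\psi}}}(\xi)\neq 0$, not multiplicity one, but that is all you need), while points (ii) and (iii) are, as in the paper, simply cited to \cite{ABV} (Corollary 19.16 and Theorem 22.7 for stability, Theorem 26.25 for the endoscopic identities). No genuine difference in approach.
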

 As explained after (\ref{eq:contentionoflpackets}), point $(i)$ is a consequence of Lemma \ref{lem:lemorbit}$(ii)$. For a proof of the second statement, see Corollary 19.16\cite{ABV} and Theorem 22.7\cite{ABV}. Finally, for a proof and a more precise statement of the last point, see Theorem 26.25 \cite{ABV}.
\subsection{Tempered representations}
In this section we study ABV-packets attached to tempered Langlands parameters. Recall that 
$\varphi\in P({}^{\vee}{G}^{\Gamma})$ is said to be \textbf{tempered} if the closure of $\varphi(W_{\mathbb{R}})$ in the analytic topology is compact.
\begin{prop}
Let $\varphi$ be a tempered Langlands parameter for ${}^{\vee}{G}^{\Gamma}$. Then  under (\ref{eq:cllc}) the corresponding orbit 
$S_{\varphi}$ of ${}^{\vee}{G}$ in $X\left({}^{\vee}{G}^{\Gamma}\right)$, is open and dense. 
\end{prop}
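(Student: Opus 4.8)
The plan is to reduce the statement, via the description of $X\left({}^{\vee}{G}^{\Gamma}\right)$ from Chapter~6 of \cite{ABV}, to the assertion that $S_{\varphi}$ corresponds to an \emph{open} orbit of a symmetric subgroup acting on a partial flag variety, and then to extract that assertion from the fact that $\varphi$ is tempered. First I would set up the geometry. Let $(y,\lambda)=(y(\varphi),\lambda(\varphi))$ be the pair attached to $\varphi$ by (\ref{eq:descriptionLanglands}), put $\lambda'=\text{Ad}(y)\lambda$, $\mathcal{O}={}^{\vee}{G}\cdot\lambda$ and $\Lambda=\Lambda(\varphi)=\lambda+\mathfrak{n}(\lambda)$, so that $S_{\varphi}$ is the ${}^{\vee}{G}$-orbit of $p(\varphi)$ and lies in the piece $X_{y}\left(\mathcal{O},{}^{\vee}{G}^{\Gamma}\right)$. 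By (\ref{eq:varietiesisomor}) there is an isomorphism $X_{y}\left(\mathcal{O},{}^{\vee}{G}^{\Gamma}\right)\cong{}^{\vee}{G}\times_{K_{y}}{}^{\vee}{G}(\Lambda)/P(\Lambda)$ which carries ${}^{\vee}{G}$-orbits to $K_{y}$-orbits on the partial flag variety ${}^{\vee}{G}(\Lambda)/P(\Lambda)$ and preserves closure relations, and tracing it through one sees that $S_{\varphi}$ corresponds to the $K_{y}$-orbit through the base point $eP(\Lambda)$, where $P(\Lambda)=P(\lambda)$ is as in (\ref{eq:groupslambda}). Here $\theta_{y}=\text{Ad}(y)$ is an involution of ${}^{\vee}{G}(\Lambda)$, since $y^{2}=\exp(2\pi i\lambda)\in Z\left({}^{\vee}{G}(\Lambda)\right)$, and $K_{y}={}^{\vee}{G}(\Lambda)^{\theta_{y}}$ is the associated symmetric subgroup, with Lie algebra $\mathfrak{k}_{y}={}^{\vee}{\mathfrak{g}}(\Lambda)^{\theta_{y}}$.

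With this in place, the $K_{y}$-orbit through $eP(\Lambda)$ is open if and only if the differential $\mathfrak{k}_{y}\to{}^{\vee}{\mathfrak{g}}(\Lambda)/\mathfrak{p}(\Lambda)$ of the orbit map is surjective, i.e.\ $\mathfrak{k}_{y}+\mathfrak{p}(\Lambda)={}^{\vee}{\mathfrak{g}}(\Lambda)$; and an open $K_{y}$-orbit is automatically dense in the component of ${}^{\vee}{G}(\Lambda)/P(\Lambda)$ it meets, which under (\ref{eq:varietiesisomor}) corresponds to the component of $X\left({}^{\vee}{G}^{\Gamma}\right)$ containing $S_{\varphi}$. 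So the Proposition reduces to proving $\mathfrak{k}_{y}+\mathfrak{p}(\Lambda)={}^{\vee}{\mathfrak{g}}(\Lambda)$.

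Next I would bring in temperedness. Writing $\varphi(z)=z^{\lambda}\bar{z}^{\lambda'}$ as in (\ref{eq:lparameters}) and restricting to $\mathbb{R}_{>0}\subset\mathbb{C}^{\times}$ gives $\varphi(e^{t})=\exp\bigl(t(\lambda+\lambda')\bigr)$, so temperedness makes $\exp\bigl(\mathbb{R}(\lambda+\lambda')\bigr)$ relatively compact and hence forces $\text{ad}(\lambda+\lambda')$ to have purely imaginary eigenvalues. On the other hand $\theta_{y}$ conjugates $\exp(2\pi i\lambda)$ to $\exp(2\pi i\lambda')$, so ${}^{\vee}{G}(\lambda)={}^{\vee}{G}(\lambda')={}^{\vee}{G}(\Lambda)$, and therefore $\text{ad}\,\lambda$ and $\text{ad}\,\lambda'$ both act on ${}^{\vee}{\mathfrak{g}}(\Lambda)$ with integer eigenvalues; since $[\lambda,\lambda']=0$ they are simultaneously diagonalizable, so $\text{ad}(\lambda+\lambda')$ has integer eigenvalues on ${}^{\vee}{\mathfrak{g}}(\Lambda)$. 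A number that is at once an integer and purely imaginary is $0$, so $\text{ad}(\lambda+\lambda')=0$ on ${}^{\vee}{\mathfrak{g}}(\Lambda)$, i.e.\ $\text{ad}\,\lambda'=-\text{ad}\,\lambda$ there. In terms of the grading ${}^{\vee}{\mathfrak{g}}(\Lambda)=\bigoplus_{n\in\mathbb{Z}}{}^{\vee}{\mathfrak{g}}(\lambda)_{n}$ this says that $P(\lambda')$ is the parabolic opposite to $P(\lambda)=P(\Lambda)$ inside ${}^{\vee}{G}(\Lambda)$; and as $\text{Ad}(y)\lambda=\lambda'$ we get $\theta_{y}\mathfrak{p}(\Lambda)=\mathfrak{p}(\lambda')=\overline{\mathfrak{p}}(\Lambda)$, whence $\mathfrak{p}(\Lambda)+\theta_{y}\mathfrak{p}(\Lambda)={}^{\vee}{\mathfrak{g}}(\Lambda)$.

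Finally I would deduce $\mathfrak{k}_{y}+\mathfrak{p}(\Lambda)={}^{\vee}{\mathfrak{g}}(\Lambda)$ from $\mathfrak{p}(\Lambda)+\theta_{y}\mathfrak{p}(\Lambda)={}^{\vee}{\mathfrak{g}}(\Lambda)$: applying $\theta_{y}-1$ and using $(\theta_{y}-1)\theta_{y}=-(\theta_{y}-1)$ gives $(\theta_{y}-1){}^{\vee}{\mathfrak{g}}(\Lambda)=(\theta_{y}-1)\mathfrak{p}(\Lambda)$, so for each $Y\in{}^{\vee}{\mathfrak{g}}(\Lambda)$ there is $p\in\mathfrak{p}(\Lambda)$ with $(\theta_{y}-1)(Y-p)=0$, i.e.\ $Y-p\in\mathfrak{k}_{y}$; hence $Y\in\mathfrak{k}_{y}+\mathfrak{p}(\Lambda)$. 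This is the required surjectivity, so $K_{y}\cdot eP(\Lambda)$ is open, and by the reduction $S_{\varphi}$ is open and dense in the component of $X\left({}^{\vee}{G}^{\Gamma}\right)$ containing it. I expect the only real difficulty to be the first step --- checking that $S_{\varphi}=p(\varphi)$ really is carried to the base-point $K_{y}$-orbit under the identification (\ref{eq:varietiesisomor}), which requires tracing through the construction of that isomorphism in Chapter~6 of \cite{ABV}, together with the mild point that ``open and dense'' is to be read relative to the component $X_{y}\left(\mathcal{O},{}^{\vee}{G}^{\Gamma}\right)$ in which $S_{\varphi}$ lives; the Lie-algebra bookkeeping in the last two steps is then routine.
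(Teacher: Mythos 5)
Your proof is correct, but it follows a genuinely different route from the paper's. The paper disposes of the proposition in a few lines by promoting the tempered parameter $\varphi$ to an Arthur parameter $\psi$ with trivial $\mathbf{SL}(2,\mathbb{C})$-part, so that $E_{\psi}=0$, and then quoting Proposition 22.9(b) of \cite{ABV}: for each $x\in S_{\varphi}$ the orbit $(P(\lambda)\cap K(y))\cdot E_{\psi}$ is dense in the conormal fiber $T^{\ast}_{S_{\varphi},x}\left(X\left({}^{\vee}{G}^{\Gamma}\right)\right)$, which therefore vanishes, giving $T_{x}S_{\varphi}=T_{x}X\left({}^{\vee}{G}^{\Gamma}\right)$ and hence openness. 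You instead avoid the Arthur-parameter detour and the conormal machinery altogether: via (\ref{eq:varietiesisomor}) you reduce to openness of the base-point $K_{y}$-orbit on ${}^{\vee}{G}(\Lambda)/P(\Lambda)$, and you extract from temperedness that $\mathrm{ad}(\lambda+\lambda')$ has purely imaginary eigenvalues while having integer eigenvalues on ${}^{\vee}{\mathfrak{g}}(\Lambda)$, hence vanishes there, so that $\theta_{y}$ carries $\mathfrak{p}(\Lambda)$ to the opposite parabolic and $\mathfrak{k}_{y}+\mathfrak{p}(\Lambda)={}^{\vee}{\mathfrak{g}}(\Lambda)$; your linear-algebra step $(\theta_{y}-1){}^{\vee}{\mathfrak{g}}(\Lambda)=(\theta_{y}-1)\mathfrak{p}(\Lambda)$ is sound, and the point you flag (that $p(\varphi)=(y,\Lambda)$ sits at the class of $(e,eP(\Lambda))$ under the isomorphism of Proposition 6.16 of \cite{ABV}, which matches orbits and closure relations) is indeed how that identification works, so no gap remains. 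The trade-off: the paper's argument is shorter and sits naturally inside the Arthur-parameter formalism used throughout Section 7, but it rests on the nontrivial input of Proposition 22.9(b) of \cite{ABV}; yours is self-contained and more elementary, making transparent the folklore mechanism that temperedness forces $\lambda'=-\lambda$ on ${}^{\vee}{\mathfrak{g}}(\Lambda)$ and hence an open symmetric-subgroup orbit, at the cost of tracing through the induced-bundle description of $X\left(\mathcal{O},{}^{\vee}{G}^{\Gamma}\right)$. Your caveat about reading ``open and dense'' within the piece $X_{y}\left(\mathcal{O},{}^{\vee}{G}^{\Gamma}\right)$ is fair, and the paper's own proof is no more precise on that point; the corollary that follows only uses openness in any case.
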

\begin{proof}
The assertion follows from Proposition 22.9$(b)$ {\cite{ABV}},
applied to an Arthur parameter with trivial $\textbf{SL}(2,\mathbb{C})$ part.
Indeed, suppose $\psi$ is an Arthur parameter with restriction to $W_{\mathbb{R}}$ equal to 
$\varphi$ and trivial $\textbf{SL}(2,\mathbb{C})$ part. Let 
$(y,\lambda)$ be the couple corresponding to $\varphi$ under Proposition 5.6 \cite{ABV}. Define $P(\lambda)$ as in (\ref{eq:groupslambda}), 
and write $K(y)$ for the centralizer of $y$ in ${}^{\vee}G^{\Gamma}$.
Finally, set $$E_{\psi}=d\psi|_{\textbf{SL}(2,\mathbb{C})}
\left(\begin{array}{cc}
0& 1\\
0& 0
\end{array}\right)=0.$$ 
Then by Proposition 22.9$(b)$ {\cite{ABV}} for each $x\in S_{\varphi}$ the orbit
$(P(\lambda)\cap K(y))\cdot E_{\psi}$ is dense in 
$T_{S_{\varphi},x}^{\ast}\left(X\left({}^{\vee}G^{\Gamma}\right)\right)$.  
But $E_{\psi}=0$, hence $T_{S_{\varphi},x}^{\ast}\left(X\left({}^{\vee}G^{\Gamma}\right)\right)=0$, that is, the annihilator of $T_x S_{\varphi}$ in $T_x^{\ast}(X({}^{\vee}G^{\Gamma}))$ is 
equal to zero.     
Therefore, $T_{x}\left(X\left({}^{\vee}G^{\Gamma}\right)\right)=T_x S_{\varphi}$
and the result follows. 
\end{proof}

\begin{cor}
Suppose $({}^{\vee}{G}^{\Gamma},\mathcal{S})$
is an $E$-group for $G$ 
with second invariant $z$.
Let $\varphi$ be a tempered Langlands parameter for ${}^{\vee}G^{\Gamma}$, then
$$\Pi^{z}({G}/\mathbb{R})_{\varphi}^{\mathrm{ABV}}=\Pi^{z}({G}/\mathbb{R})_{\varphi}.$$
where at right, $\Pi^{z}({G}/\mathbb{R})_{\varphi}$ denotes the Langlands packet of $\varphi$.
\end{cor}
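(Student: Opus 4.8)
The plan is to prove the two inclusions separately. The inclusion $\Pi^{z}({G}/\mathbb{R})_{\varphi}\subseteq\Pi^{z}({G}/\mathbb{R})_{\varphi}^{\mathrm{ABV}}$ is already available: viewing $\varphi$ as $\varphi_{\psi}$ for the Arthur parameter $\psi$ with $\psi|_{W_{\mathbb{R}}}=\varphi$ and trivial $\mathbf{SL}(2,\mathbb{C})$-part, Definition \ref{deftn:abvpackets} identifies $\Pi^{z}({G}/\mathbb{R})_{\varphi}^{\mathrm{ABV}}$ with the micro-packet $\Pi^{z}({G}/\mathbb{R})_{\varphi}^{mic}$, and the containment (\ref{eq:contentionoflpackets}) of the Langlands packet in the micro-packet (a consequence of Lemma \ref{lem:lemorbit}$(ii)$) gives the claim. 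So the substance is the reverse inclusion $\Pi^{z}({G}/\mathbb{R})_{\varphi}^{mic}\subseteq\Pi^{z}({G}/\mathbb{R})_{\varphi}$.

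For this I take a complete geometric parameter $\xi'=(S',\mathcal{V}')$ with $\pi(\xi')\in\Pi^{z}({G}/\mathbb{R})_{\varphi}^{mic}$, i.e. $\chi_{S_{\varphi}}^{mic}(\xi')\neq 0$, and I must show that $\xi'$ is supported on $S_{\varphi}$, since the Langlands packet of $\varphi$ consists precisely of the representations $\pi(\xi)$ attached to complete parameters supported on $S_{\varphi}$ (end of Section 5). By Lemma \ref{lem:lemorbit}$(i)$, the non-vanishing of $\chi_{S_{\varphi}}^{mic}(\xi')$ forces $S_{\varphi}\subseteq\overline{S'}$; in particular $S'$ and $S_{\varphi}$ lie in the same irreducible piece $X_{y_i}\left(\mathcal{O},{}^{\vee}{G}^{\Gamma}\right)$ of (\ref{eq:varieties}), where $\mathcal{O}$ is the semisimple orbit with $S_{\varphi}\subset X\left(\mathcal{O},{}^{\vee}{G}^{\Gamma}\right)$.

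Now the preceding Proposition says that $S_{\varphi}$ is open and dense in this component $X_{y_i}\left(\mathcal{O},{}^{\vee}{G}^{\Gamma}\right)$, which by (\ref{eq:varietiesisomor}) is irreducible. Hence $\overline{S'}\supseteq\overline{S_{\varphi}}=X_{y_i}\left(\mathcal{O},{}^{\vee}{G}^{\Gamma}\right)$, so $S'$ is open in its own closure $X_{y_i}\left(\mathcal{O},{}^{\vee}{G}^{\Gamma}\right)$ and therefore dense there; two nonempty open subsets of an irreducible variety meet, so $S'\cap S_{\varphi}\neq\emptyset$, and since both are ${}^{\vee}{G}$-orbits, $S'=S_{\varphi}$. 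Thus $\xi'=(S_{\varphi},\mathcal{V}')$ is supported on $S_{\varphi}$, so $\pi(\xi')\in\Pi^{z}({G}/\mathbb{R})_{\varphi}$, which proves the reverse inclusion and hence the equality. The only delicate point is this last topological step, together with keeping track of the fact --- supplied by Lemma \ref{lem:lemorbit}$(i)$ --- that the competing parameter $\xi'$ necessarily lives on the same irreducible component of $X\left({}^{\vee}{G}^{\Gamma}\right)$ as $S_{\varphi}$, which is what makes the density statement of the preceding Proposition applicable.
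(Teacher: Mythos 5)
Your proposal is correct and follows essentially the same route as the paper: the easy inclusion comes from the micro-packet containing the $L$-packet, and the reverse inclusion from Lemma \ref{lem:lemorbit}$(i)$ giving $S_{\varphi}\subset\overline{S'}$, after which openness of $S_{\varphi}$ (the preceding Proposition) forces $S'\cap S_{\varphi}\neq\emptyset$ and hence $S'=S_{\varphi}$, so the parameter is supported on $S_{\varphi}$. The only difference is your detour through irreducibility of $X_{y_i}\left(\mathcal{O},{}^{\vee}G^{\Gamma}\right)$, which is not needed (and not quite justified as stated): since $S'$ is dense in its closure and $S_{\varphi}$ is a nonempty open set contained in that closure, the intersection is already nonempty.
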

\begin{proof}
By Theorem \ref{theo:abvproperties}($i$), the $L$-packet $\Pi^{z}({G}/\mathbb{R})_{\varphi}$ is contained in $\Pi^{z}({G}/\mathbb{R})_{\varphi}^{{mic}}$. We only need to show the opposite inclusion.
Let $\pi\in\Pi^{z}({G}/\mathbb{R})_{\varphi}^{\text{ABV}}$ and write $S$ 
for the orbit of ${}^{\vee}{G}$ in $X({}^{\vee}{G}^{\Gamma})$ corresponding to the Langlands parameter of $\pi$ under Proposition 6.17 \cite{ABV} (see also Equation (\ref{eq:cllc})). From Lemma \ref{lem:lemorbit} and the definition of $\Pi^{z}({G}/\mathbb{R})_{\varphi}^{\text{ABV}}$, the orbit $S$ contains $S_{\varphi}$ in its closure. As 
$S_{\varphi}$ is open, $S_{\varphi}\cap S\neq \emptyset$ and we have $S_{\varphi}=S$.
By Lemma \ref{lem:lemorbit}($ii$), $\pi\in\Pi^{z}(G/\mathbb{R})_{\varphi}$ and we can conclude the desired inclusion.
\end{proof}
\subsection{Essentially principal unipotent Arthur parameters}
In this section we give a full description of the ABV-packets attached to essentially unipotent Arthur parameters, whose retriction to 
$\mathbf{SL}(2,\mathbb{C})$ is a principal morphism. 
Our goal is to 
prove that
the  
ABV-packets corresponding to such parameters, 
consists of characters, one for each real form of $G$ in our fixed inner class.   
We obtain this through a 
slight generalization of Theorem 27.18 \cite{ABV} (see Theorem \ref{theo:unipotentparameter} below) which only treats the principal unipotent case. 
This result  
is a key step in the proof 
 that the packets defined in \cite{Adams-Johnson} are ABV-packets.\\

Let $\psi$ be an Arthur parameter of ${}^{\vee}G^{\Gamma}$. We say that\\ 

$\bullet$  $\psi$ is \textbf{unipotent}, if its restriction to the identity component $\mathbb{C}^{\times}$ of 
$W_{\mathbb{R}}$ is trivial.\\

More generally, we say that:\\

$\bullet$ $\psi$ is \textbf{essentially
unipotent}, if the image of its restriction to the identity component $\mathbb{C}^{\times}$ of 
$W_{\mathbb{R}}$ is contained in the center $Z({}^{\vee}{G})$ of ${}^{\vee}{G}$.\\

Next, fix a morphism
\begin{align*}
\psi_{1}:\mathbf{SL}(2,\mathbb{C})\longrightarrow {}^{\vee}{G}.
\end{align*}
Then we say that $\psi_1$ is \textbf{principal}, if $\psi(\mathbf{SL}(2,\mathbb{C}))$ contains a principal unipotent element. Finally, the (essentially)
unipotent Arthur parameter $\psi$ is called \textbf{(essentially) principal unipotent Arthur parameter} if 
$\psi|_{\mathbf{SL}(2,\mathbb{C})}$ is principal.\\

The next result due to Adams, Barbasch and Vogan (see Theorem 27.18 \cite{ABV} and the remark at the end of page 310 of \cite{ABV})
gives a description of the set $\Pi(G/\mathbb{R})_{\psi}^{\text{ABV}}$ for $\psi$ a principal unipotent Arthur parameter. 
\begin{theo}\label{theo:unipotentparameter}
Suppose $\left({}^{\vee}{G}^{\Gamma},\mathcal{S}\right)$
is an $E$-group for $G$ 
with second invariant $z$.
Fix a principal morphism
\begin{align*}
\psi_{1}:\mathbf{SL}(2,\mathbb{C})\longrightarrow {}^{\vee}{G}.
\end{align*}
\begin{enumerate}[a)]
\item The centralizer $S_{0}$ of $\psi_{1}$ in ${}^{\vee}{G}$ is $Z({}^{\vee}{G})$.
\item Suppose $z=1$ (i.e. $\left({}^{\vee}{G}^{\Gamma},\mathcal{S}\right)$ is an $L$-group). Then the set of equivalence classes of unipotent Arthur parameters attached to $\psi_{1}$ may be identified with
$$H^{1}(\Gamma,Z({}^{\vee}{G}))=\{z\in Z({}^{\vee}{G}):z\theta_{Z}(z)=1\}/\{w\theta_{Z}^{-1}(w):w\in Z({}^{\vee}{G})\}.$$
More generally, if $z\in (1+\theta_{Z})Z({}^{\vee}G)$
then the set of equivalence classes of unipotent Arthur parameters attached to $\psi_{1}$ 
is a principal homogeneous space for
$H^{1}(\Gamma,Z({}^{\vee}{G})).$

\item The unipotent representations of type $z$ (of some real form $G(\mathbb{R},\delta))$ attached
to $\psi_{1}$ are precisely the projective representations of type z trivial on the identity component $G(\mathbb{R},\delta))_0$.
\item Suppose $z\in (1+\theta_{Z})Z({}^{\vee}G)$.
Let $\delta$ be any strong real form of $G^{\Gamma}$ and write  
$$\mathrm{Hom}^{z}_{\mathrm{cont}}({G}^{can}(\mathbb{R},\delta)/{G}^{can}(\mathbb{R},\delta)_{0},\mathbb{C}^{\times}),$$ for the set of characters of type $z$ of
$G^{can}(\mathbb{R},\delta)$ trivial on $G^{can}(\mathbb{R},\delta)_0$.
Then there is a natural surjection
\begin{align}\label{eq:mapunipotent}
H^{1}(\Gamma,Z({}^{\vee}{G}))\rightarrow \mathrm{Hom}^{z}_{\mathrm{cont}}({G}^{can}(\mathbb{R},\delta)/{G}^{can}(\mathbb{R},\delta)_{0},\mathbb{C}^{\times}).
\end{align}
\item 
Suppose $\psi$ is a unipotent Arthur parameter attached to $\psi_1$ and $\delta$
is a strong real form of $G^{\Gamma}$. Write $\pi(\psi,\delta)$ for the character of 
$G^{can}(\mathbb{R},\delta)$ trivial on $G^{can}(\mathbb{R},\delta)_0$ attached to $\psi$ by composing the bijection of $(b)$ with the surjection of $(c)$. Let $P(\psi,\delta)$, be the perverse 
sheaf on $X\left({}^{\vee}G^{\Gamma}\right)$ corresponding to $\pi(\psi,\delta)$
under the Local Langlands Correspondence (see Theorem \ref{theo:4.1}). Then for any perverse sheaf $P$ on $X\left({}^{\vee}G^{\Gamma}\right)$, we have   
$$\chi_{S_{\psi}}^{mic}(P)=\left\{\begin{array}{cl}
1& \text{if }P\cong P(\psi,\delta),\quad\text{ for some strong real form }\delta\text{ of }G\\
0& otherwise.
\end{array}\right.
$$
Consequently,
\begin{align}\label{eq:repunipotenttordu}
\Pi^{z}(G/\mathbb{R})_{\psi}^{\mathrm{ABV}}=\{\pi(\psi,\delta)\}_{\delta \text{ strong real form of } G^{\Gamma}}.
\end{align}
\end{enumerate}
\end{theo}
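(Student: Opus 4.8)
This is, after the reformulations of the preceding sections, Theorem~27.18 of \cite{ABV}, the twisted case ($z\neq1$) being the content of the remark at the end of page~310 of \cite{ABV}; I indicate how I would assemble the proof from the material already recalled.

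Parts~(a) and~(b) are pure group theory. For~(a), a principal $\psi_{1}$ has a regular nilpotent element $E$ in the image of $d\psi_{1}$, and by Kostant's $\mathfrak{sl}(2)$-theory the centralizer of the $\mathfrak{sl}(2)$-triple $d\psi_{1}(\mathfrak{sl}(2,\mathbb{C}))$ is reductive and contained in $Z_{{}^{\vee}G}(E)$; since the identity component of $Z_{{}^{\vee}G}(E)$ is $Z({}^{\vee}G)^{\circ}$ times a unipotent group of dimension equal to the semisimple rank of ${}^{\vee}G$, a dimension count together with $Z({}^{\vee}G)\subseteq S_{0}$ forces $S_{0}=Z({}^{\vee}G)$. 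For~(b), a unipotent Arthur parameter $\psi$ with $\psi|_{\mathbf{SL}(2,\mathbb{C})}=\psi_{1}$ is determined by the single element $y=\psi(j)\in{}^{\vee}G^{\Gamma}-{}^{\vee}G$, which must centralize $\psi_{1}(\mathbf{SL}(2,\mathbb{C}))$ (because $j$ and $\mathbf{SL}(2,\mathbb{C})$ commute in $W_{\mathbb{R}}\times\mathbf{SL}(2,\mathbb{C})$) and satisfy $y^{2}=\psi(-1)=z$. By~(a), conjugating such $y$ by ${}^{\vee}G$ amounts to conjugating by $Z({}^{\vee}G)$; fixing a base point $y_{0}$ with $y_{0}^{2}=z$ (one exists exactly because $z$ is the second invariant of the given $E$-group) and writing $y=wy_{0}$, the identities $(wy_{0})^{2}=w\theta_{Z}(w)z$ and $v(wy_{0})v^{-1}=v\theta_{Z}(v)^{-1}wy_{0}$ for $v\in Z({}^{\vee}G)$ identify the equivalence classes of such $\psi$ with $\{w\in Z({}^{\vee}G):w\theta_{Z}(w)=1\}/\{v\theta_{Z}(v)^{-1}:v\in Z({}^{\vee}G)\}=H^{1}(\Gamma,Z({}^{\vee}G))$ when $z=1$, and with a torsor over $H^{1}(\Gamma,Z({}^{\vee}G))$ when $z\in(1+\theta_{Z})Z({}^{\vee}G)$.

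For~(c), (d) and the elementary half of~(e) one puts $X(\mathcal{O},{}^{\vee}G^{\Gamma})$ into the shape of Section~6. The semisimple element $\lambda$ attached to $\varphi_{\psi}$ by (\ref{eq:descriptionLanglands}) is regular (as $\psi_{1}$ is principal), so by (\ref{eq:groupslambda}) $L(\lambda)$ is a maximal torus, $P(\lambda)={}^{\vee}B$ is a Borel subgroup, and $\exp(2\pi i\lambda)$ acts trivially on every root space, is therefore central, and so ${}^{\vee}G(\lambda)={}^{\vee}G$. By (\ref{eq:varietiesisomor}) each component then becomes $X_{y_{i}}(\mathcal{O},{}^{\vee}G^{\Gamma})\cong{}^{\vee}G\times_{K_{y_{i}}}({}^{\vee}G/{}^{\vee}B)$, so that ${}^{\vee}G$-orbits match $K_{y_{i}}$-orbits on the full flag variety of ${}^{\vee}G$; since $y(\varphi_{\psi})=\exp(\pi i\lambda)\psi(j)$ normalizes $P(\lambda)$ (its first factor lies in the torus $L(\lambda)\subset P(\lambda)$, and its second factor, centralizing $\psi_{1}(\mathbf{SL}(2,\mathbb{C}))$, fixes $\lambda$ and preserves $\mathfrak{n}(\lambda)$), the orbit $S_{\psi}$ is that of a $\theta$-stable Borel, hence closed, so $\overline{S_{\psi}}=S_{\psi}$. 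The complete geometric parameters supported on $S_{\psi}$ are therefore, via Theorem~\ref{theo:4.1}, precisely the characters of $G^{can}(\mathbb{R},\delta)$ trivial on $G^{can}(\mathbb{R},\delta)_{0}$ (the ${}^{\vee}G^{alg}$-equivariant local systems on the closed orbit being of rank one), which gives~(c); counting these through the isomorphism of Lemma~10.2 of \cite{ABV} between finite-order elements of $Z({}^{\vee}G)^{\theta_{Z}}$ and characters of $\pi_{1}(G)^{can}$, together with the bookkeeping for the real points of the center and with~(b), yields the surjection (\ref{eq:mapunipotent}) of~(d). Finally $P(\psi,\delta)=i_{\ast}j_{!\ast}\mathcal{V}_{x,\tau}[-d]$ with $j$ the identity map of $S_{\psi}$, i.e.\ a shifted rank-one local system on the closed orbit; hence $CC(P(\psi,\delta))=[\overline{T^{\ast}_{S_{\psi}}X({}^{\vee}G^{\Gamma})}]$ and $\chi^{mic}_{S_{\psi}}(P(\psi,\delta))=1$.

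The one substantial point of~(e) is the vanishing $\chi^{mic}_{S_{\psi}}(P(\xi))=0$ for every complete geometric parameter $\xi$ whose orbit is not $S_{\psi}$. By Lemma~\ref{lem:lemorbit}(i) this is automatic unless $S_{\psi}\subset\overline{S_{\xi}}$, so only the intersection-cohomology extensions of strictly larger orbit closures must be handled, and I would do this, following Chapter~27 of \cite{ABV}, via the duality theory of Chapters~24--26: through the Beilinson--Bernstein and Vogan dualities the microlocal multiplicity $\chi^{mic}_{S_{\psi}}$ on $X({}^{\vee}G^{\Gamma})$ is translated into a composition multiplicity of standard modules for the real forms of ${}^{\vee}G$ at the orbit dual to $S_{\psi}$, which is the \emph{open} orbit of the tempered $L$-parameter dual to $\varphi_{\psi}$; since tempered standard modules are irreducible, that composition multiplicity is trivial, and unwinding the duality leaves exactly the parameters of the characters $\pi(\psi,\delta)$. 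This gives $\chi^{mic}_{S_{\psi}}(P(\xi))\in\{0,1\}$, equal to $1$ precisely when $P(\xi)\cong P(\psi,\delta)$, whence $\Pi^{z}(G/\mathbb{R})^{\mathrm{ABV}}_{\psi}=\{\pi(\psi,\delta)\}_{\delta}$ by Definition~\ref{deftn:abvpackets}. The main obstacle is concentrated entirely in invoking this duality correctly --- in particular in checking that $\varphi_{\psi}$ is dual to a tempered Langlands parameter and in matching up local systems across the real forms; everything else is bookkeeping with Section~6 and elementary $\mathfrak{sl}(2)$-theory.
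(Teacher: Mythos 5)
The paper itself offers no proof of this theorem: it is imported verbatim from \cite{ABV} (Theorem 27.18 together with the remark at the end of page 310), which is exactly the attribution you open with, so as far as matching the paper goes the citation is all that is required. Your supplementary sketch, however, contains one concrete error and one genuine gap. In (b) you assert $y^{2}=\psi(-1)=z$; but for a \emph{unipotent} parameter $\psi$ is trivial on $\mathbb{C}^{\times}$, so $\psi(-1)=1$ and $y^{2}=1$. The second invariant enters differently: writing $y=w\,{}^{\vee}\delta$ with $w\in Z({}^{\vee}{G})$ (possible because $\mathrm{Ad}(y)$ fixes the splitting adapted to $\psi_{1}$), one gets $y^{2}=w\theta_{Z}(w)z$, so the condition is $w\theta_{Z}(w)=z^{-1}$, which is solvable precisely when $z\in(1+\theta_{Z})Z({}^{\vee}{G})$ --- this is where that hypothesis in (b), (d), (e) comes from --- and the solution set is a torsor under $\{w:w\theta_{Z}(w)=1\}$, hence the classes form a torsor under $H^{1}(\Gamma,Z({}^{\vee}{G}))$, canonically trivialized only when $z=1$. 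With your equation the answer would come out canonically equal to $H^{1}$ for every admissible $z$, contradicting the ``principal homogeneous space'' formulation, and the existence criterion disappears.

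The genuine gap is in (e), which you yourself flag as the main obstacle. The vanishing $\chi^{mic}_{S_{\psi}}(P(\xi))=0$ for $S_{\xi}\neq S_{\psi}$ is the real content of Theorem 27.18, and in \cite{ABV} (pp.\ 306--310) it rests on the primitive-ideal machinery of Chapter 27 --- Corollary 27.13 via Theorems 27.10, 27.12, 21.6 and 21.8 --- exactly the ingredients this paper invokes again when proving its generalization, Theorem \ref{theo:essentiallyunipotentparameter}; the same machinery is also what underlies part (c), since ``unipotent representations attached to $\psi_{1}$'' is a primitive-ideal-theoretic notion, not simply ``parameters supported on $S_{\psi}$.'' Your proposed substitute --- that Vogan/Beilinson--Bernstein duality converts the microlocal multiplicity along the closed orbit into a composition multiplicity of tempered standard modules on the dual side, which then vanishes by irreducibility --- is not an available theorem: the duality of \cite{ABV} matches multiplicity matrices with stalk (intersection-cohomology) data, not with microlocal multiplicities, and turning $\chi^{mic}_{S_{\psi}}$ into such a statement is precisely the kind of assertion that would itself need the Chapter 27 analysis. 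So the decisive step of (e) is asserted rather than proved. The purely geometric portions of your sketch (regularity of $\lambda$, ${}^{\vee}{G}(\lambda)={}^{\vee}{G}$, closedness of $S_{\psi}$ as the orbit of a $\theta$-stable Borel, and $CC(P(\psi,\delta))=[\overline{T^{\ast}_{S_{\psi}}X({}^{\vee}{G}^{\Gamma})}]$) are fine.
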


We turn now to the study of ABV-packets attached to 
essentially principal unipotent Arthur parameters. 
We begin 
by describing the behaviour of micro-packets under twisting. We record this in 
the next proposition. 
In order to enunciate the result we 
recall that for each strong real form $\delta$ of 
$G^{\Gamma}$, there is a natural morphism
\begin{align}\label{eq:cocylemap}
H^{1}(W_{\mathbb{R}},Z({}^{\vee}{G})) &\rightarrow \mathrm{Hom}_{\mathrm{cont}}({G}(\mathbb{R},\delta),\mathbb{C}^{\times})\\
\mathbf{a}&\mapsto \chi(\mathbf{a},\delta),\nonumber
\end{align}
which is surjective and maps cocyles with compact image to unitary characters of $G(\mathbb{R},\delta)$ (see for example Section 2 \cite{Langlands}). 
\begin{prop}\label{prop:torsion}
Suppose $\left({}^{\vee}{G}^{\Gamma},\mathcal{S}\right)$ is an $E$-group for $G$ with second invariant $z$.
Suppose $\mathbf{a}\in H^{1}(W_{\mathbb{R}},Z({}^{\vee}{G}))$ and let the cocycle $\mathrm{a}$ be a representative of $\mathbf{a}$. For each strong real form $\delta$ of 
$G^{\Gamma}$ let $\chi(\mathbf{a},\delta)$ be the character of $G(\mathbb{R},\delta)$ attached to $\mathbf{a}$ as in (\ref{eq:cocylemap}).
\begin{enumerate}[i.]
\item If $\varphi$ is a Langlands parameter for ${}^{\vee}G^{\Gamma}$, then the morphism
$$\varphi_{\mathbf{a}}:W_{\mathbb{R}}\rightarrow{}^{\vee}G^{\Gamma},\quad w\mapsto (\mathrm{a}(w),1)\varphi(w)$$
defines a Langlands parameter of ${}^{\vee}G^{\Gamma}$ whose equivalence class depends exclusively on $\mathbf{a}$ and the equivalence class of $\varphi$.  Furthermore
\begin{align*}
\Pi^{z}(G/\mathbb{R})^{mic}_{\varphi_{\mathbf{a}}}=\coprod_{\substack{\delta\text{ strong real form}\\
                  \text{of }G^\Gamma}}\Pi^{z}(G(\mathbb{R},\delta))^{mic}_{\varphi_{\mathbf{a}}}
\end{align*} 
where
\begin{align*}
\Pi^{z}(G(\mathbb{R},\delta))^{mic}_{\varphi_{\mathbf{a}}}=\{\pi\otimes\chi(\mathbf{a},\delta):\pi\in \Pi^{z}(G(\mathbb{R},\delta))^{mic}_{\varphi}\}.
\end{align*} 
\item 
If $\psi$ is an Arthur parameter for ${}^{\vee}G^{\Gamma}$, then the morphism
\begin{align}\label{eq:arthurcocycle}
\psi_{\mathbf{a}}:W_{\mathbb{R}}\times\mathbf{SL}(2,\mathbb{C})\rightarrow{}^{\vee}G^{\Gamma},\quad (w, g)\mapsto (a(w), 1)\psi(w,g)
\end{align}
defines an Arthur parameter of ${}^{\vee}G^{\Gamma}$ whose equivalence class depends exclusively on $\mathbf{a}$ and the equivalence class of $\psi$. Furthermore
$$\Pi^{z}(G/\mathbb{R})_{\psi_{\mathbf{a}}}^{\mathrm{ABV}}=\coprod_{\substack{\delta\text{ strong real form}\\
                  \text{of }G^\Gamma}}\Pi^{z}(G(\mathbb{R},\delta))^{mic}_{\psi_{\mathbf{a}}}$$ 
where
\begin{align*}
\Pi^{z}(G(\mathbb{R},\delta))^{mic}_{\psi_{\mathbf{a}}}=\{\pi\otimes\chi(\mathbf{a},\delta):\pi\in \Pi^{z}(G(\mathbb{R},\delta))^{mic}_{\psi}\}.
\end{align*} 
\end{enumerate}
\end{prop}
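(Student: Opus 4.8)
The plan is to reduce everything to the behaviour of the central twist on the space of geometric parameters $X\left({}^{\vee}G^{\Gamma}\right)$, where it becomes transparent, and then to transport the conclusion back through the Local Langlands Correspondence. First I would check the elementary assertions. Since $\mathrm{a}(w)\in Z({}^{\vee}G)$, the element $(\mathrm{a}(w),1)$ lies in ${}^{\vee}G$, so $\varphi_{\mathbf{a}}$ has the same projection to $\Gamma$ as $\varphi$ and its ${}^{\vee}G$-projection is $\mathrm{a}(w)$ times a semisimple element, hence semisimple; the cocycle identity for $\mathrm{a}$ with respect to the $\Gamma$-action on $Z({}^{\vee}G)$ is precisely what makes $\varphi_{\mathbf{a}}$ a homomorphism, so $\varphi_{\mathbf{a}}\in P\left({}^{\vee}G^{\Gamma}\right)$. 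Replacing $\mathrm{a}$ by a cohomologous cocycle amounts to conjugating $\varphi_{\mathbf{a}}$ by an element $(t,1)$ with $t\in Z({}^{\vee}G)$, and replacing $\varphi$ by a ${}^{\vee}G$-conjugate conjugates $\varphi_{\mathbf{a}}$; so the ${}^{\vee}G$-conjugacy class of $\varphi_{\mathbf{a}}$ depends only on $\mathbf{a}$ and on the class of $\varphi$. The same computation handles $\psi_{\mathbf{a}}$, whose $\mathbf{SL}(2,\mathbb{C})$-part is that of $\psi$ (holomorphic) and whose restriction to $W_{\mathbb{R}}$ stays tempered; moreover a one-line check gives the key identity $\varphi_{\psi_{\mathbf{a}}}=(\varphi_{\psi})_{\mathbf{a}}$, since the central factor $(\mathrm{a}(w),1)$ commutes with $\mathrm{diag}(|w|^{1/2},|w|^{-1/2})$.

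Next I would analyse the effect of the twist on the geometry. Let $(y,\lambda)$ be the pair attached to $\varphi$ by (\ref{eq:descriptionLanglands}) and let $\mathcal{O}={}^{\vee}G\cdot\lambda$. Because $\mathbb{C}^{\times}\subset W_{\mathbb{R}}$ acts trivially on $Z({}^{\vee}G)$, the restriction of $\mathrm{a}$ to $\mathbb{C}^{\times}$ is a homomorphism into $Z({}^{\vee}G)$, determining a central semisimple $\mu\in{}^{\vee}\mathfrak{g}$; then $\varphi_{\mathbf{a}}$ has attached pair $(y_{\mathbf{a}},\lambda+\mu)$ with $y_{\mathbf{a}}\in y\cdot Z({}^{\vee}G)$, and infinitesimal character orbit $\mathcal{O}_{\mathbf{a}}={}^{\vee}G\cdot(\lambda+\mu)$. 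Since $\mu$ is central, the groups ${}^{\vee}G(\Lambda),L(\Lambda),N(\Lambda),P(\Lambda)$ of (\ref{eq:groupslambda}) and the attached involutions are unchanged under translation by $\mu$, so by the description (\ref{eq:varietiesisomor}) the translation of the $\lambda$-coordinate together with the central modification of the $y$-coordinate defines a ${}^{\vee}G$-equivariant isomorphism of algebraic varieties $t_{\mathbf{a}}:X\left(\mathcal{O},{}^{\vee}G^{\Gamma}\right)\xrightarrow{\ \sim\ }X\left(\mathcal{O}_{\mathbf{a}},{}^{\vee}G^{\Gamma}\right)$ carrying $S_{\varphi}$ to $S_{\varphi_{\mathbf{a}}}$.

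Being a ${}^{\vee}G$-equivariant isomorphism, $t_{\mathbf{a}}$ induces an equivalence of the categories of ${}^{\vee}G^{alg}$-equivariant perverse sheaves, a bijection on ${}^{\vee}G$-orbits preserving the closure order, and identifications $A_x^{alg}\cong A_{t_{\mathbf{a}}(x)}^{alg}$ of equivariant fundamental groups. Hence on complete geometric parameters $t_{\mathbf{a}}$ sends $\xi=(S,\mathcal{V})$ to $\xi_{\mathbf{a}}=(t_{\mathbf{a}}(S),t_{\mathbf{a}\ast}\mathcal{V})$ with $P(\xi_{\mathbf{a}})=t_{\mathbf{a}\ast}P(\xi)$; since characteristic cycles are carried to characteristic cycles by pushforward along an isomorphism, $\chi^{mic}_{t_{\mathbf{a}}(S')}(P(\xi_{\mathbf{a}}))=\chi^{mic}_{S'}(P(\xi))$ for every orbit $S'$. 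Taking $S'=S_{\varphi}$ yields a bijection $\Xi^{z}(G/\mathbb{R})^{mic}_{\varphi}\xrightarrow{\ \sim\ }\Xi^{z}(G/\mathbb{R})^{mic}_{\varphi_{\mathbf{a}}}$, $\xi\mapsto\xi_{\mathbf{a}}$. Finally, by the compatibility of the correspondence of Theorem \ref{theo:4.1} with tensoring by the characters $\chi(\mathbf{a},\delta)$ of (\ref{eq:cocylemap}) — which is built into the construction in \cite{ABV} and matches exactly the geometric twist $t_{\mathbf{a}}$ — the parameter $\xi_{\mathbf{a}}$ corresponds to $\pi(\xi)\otimes\chi(\mathbf{a},\delta)$, again a representation of the same strong real form. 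Combining this with the previous bijection, and noting that the decomposition of a micro-packet according to the strong real form is tautological, gives $\Pi^{z}(G(\mathbb{R},\delta))^{mic}_{\varphi_{\mathbf{a}}}=\{\pi\otimes\chi(\mathbf{a},\delta):\pi\in\Pi^{z}(G(\mathbb{R},\delta))^{mic}_{\varphi}\}$, which is (i); part (ii) then follows by applying (i) to $\varphi=\varphi_{\psi}$, using $\varphi_{\psi_{\mathbf{a}}}=(\varphi_{\psi})_{\mathbf{a}}$ and Definition \ref{deftn:abvpackets}.

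I expect the main obstacle to be Steps two and three: constructing the ${}^{\vee}G$-equivariant isomorphism $t_{\mathbf{a}}$ cleanly from (\ref{eq:varietiesisomor}) (in particular pinning down the central modification $y\mapsto y_{\mathbf{a}}$ so that $y_{\mathbf{a}}^{2}=\exp\bigl(2\pi i(\lambda+\mu)\bigr)$ and all the $\mathcal{I}_i(\mathcal{O})$-bookkeeping is respected), and then verifying that $t_{\mathbf{a}}$ intertwines the constructions $\xi\mapsto P(\xi)$ and hence preserves the micro-multiplicities; a secondary subtlety is extracting from \cite{ABV} the precise compatibility of the Local Langlands Correspondence with the character twist $\chi(\mathbf{a},\delta)$ used in the last paragraph.
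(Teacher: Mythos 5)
Your proposal is correct and follows essentially the same route as the paper: the same central-translation isomorphism $(y,\Lambda)\mapsto(\exp(\pi i\lambda_{\mathbf{a}})y,\Lambda+\lambda_{\mathbf{a}})$ between $X\left(\mathcal{O}_{\varphi},{}^{\vee}G^{\Gamma}\right)$ and $X\left(\mathcal{O}_{\varphi_{\mathbf{a}}},{}^{\vee}G^{\Gamma}\right)$, the resulting identification of irreducible perverse sheaves and equality of microlocal multiplicities, the appeal to the known compatibility of the Langlands correspondence with twisting by $\chi(\mathbf{a},\delta)$ (the paper cites the properties of $L$-packets in Langlands's paper), and the deduction of (ii) from (i) via $\varphi_{\psi_{\mathbf{a}}}=(\varphi_{\psi})_{\mathbf{a}}$. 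The point you flag as the main obstacle is resolved in the paper simply by taking $y_{\mathbf{a}}=\exp(\pi i\lambda_{\mathbf{a}})y$, exactly as your setup suggests.
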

\begin{proof}
To check that $\varphi_{\mathbf{a}}$ defines a Langlands parameter is straightforward. 
Let $\lambda_{\varphi}$ be defined as in Equation (5.7)(a) \cite{ABV}. By the definition of $\varphi_{\mathbf{a}}$
there exist $\lambda_{\mathbf{a}}\in Z({}^{\vee}\mathfrak{g})$ such that $\lambda_{\varphi_{\mathbf{a}}}=
\lambda_{\varphi}+\lambda_{\mathbf{a}}$.
Write 
$$\mathcal{O}_{\varphi}={}^{\vee}G\cdot \lambda_{\varphi}\quad 
\text{ and }\quad  \mathcal{O}_{\varphi_{\mathbf{a}}}={}^{\vee}G\cdot \lambda_{\varphi_{\mathbf{a}}}.$$  
Then
\begin{align*}
X\left(\mathcal{O}_{\varphi},{}^{\vee}{G}^{\Gamma}\right)&\rightarrow X\left(\mathcal{O}_{\varphi_{\mathbf{a}}},{}^{\vee}{G}^{\Gamma}\right)\\
(y,\Lambda)&\mapsto (\text{exp}(\pi i \lambda_{\mathbf{a}})y,\Lambda+\lambda_{\mathbf{a}}),
\end{align*}
defines and isomorphism of varieties, that induces a bijection of orbits 
\begin{align*}
{}^{\vee}G-\text{orbits on }X(\mathcal{O}_{\varphi},{}^{\vee}{G}^{\Gamma}) &\rightarrow
{}^{\vee}G-\text{orbits on }X(\mathcal{O}_{\varphi_{\mathbf{a}}},{}^{\vee}{G}^{\Gamma})\\
S&\mapsto S',
\end{align*}
and of geometric parameters
\begin{align*}
\Xi^{z}(\mathcal{O}_{\varphi},G/\mathbb{R})&\rightarrow \Xi^{z}(\mathcal{O}_{\varphi_{\mathbf{a}}},G/\mathbb{R})\\
\xi&\mapsto\xi'.
\end{align*} 
Furthermore, from the description of irreducible perverse sheaves given in Equation (\ref{eq:irredPerv}),
we have an isomorphism $KX\left(\mathcal{O}_{\varphi},{}^{\vee}{G}^{\Gamma}\right)\cong KX\left(\mathcal{O}_{\varphi_{\mathbf{a}}},{}^{\vee}{G}^{\Gamma}\right)$ that restrict  
for each $\xi\in \Xi^{z}(\mathcal{O}_{\varphi},G/\mathbb{R})$
to an isomorphism $P(\xi)\cong P(\xi')$. Therefore, for each 
${}^{\vee}G$-orbit $S$ on $X\left(\mathcal{O}_{\varphi},{}^{\vee}G^{\Gamma}\right)$ we obtain
\begin{align}\label{eq:tensorcycles}
\chi_{S}^{mic}(P(\xi))=\chi_{S'}^{mic}(P(\xi')).
\end{align}
Now, from the properties of $L$-packets described in Section 3 \cite{Langlands}, we deduce 
that for each strong real form $\delta\in G^{\Gamma}$, the set of irreducible representations of
$G(\mathbb{R},\delta)$ corresponding to complete geometric parameters for
$X\left(\mathcal{O}_{\varphi_{\mathbf{a}}},{}^{\vee}G^{\Gamma}\right)$, under the Local Langlands Correspondence (see Theorem \ref{theo:4.1}),
is equal to the set of irreducible representations of $G(\mathbb{R},\delta)$ 
attached to complete geometric parameters for 
$X\left(\mathcal{O}_{\varphi},{}^{\vee}G^{\Gamma}\right)$
tensored by $\chi({\mathbf{a}},\delta)$. Point $(i)$ follows then from the definition of micro-packets and equality (\ref{eq:tensorcycles}).
Point $(ii)$ is a direct consequence of point $(i)$ applied to $\varphi_{\psi_{\mathbf{a}}}$ and the definition of ABV-packets.
\end{proof}
The following corollary generalizes Theorem \ref{theo:unipotentparameter}
to the case of 
$E$-groups admitting principal unipotent Arthur parameters.
\begin{cor}\label{cor:essentialunipotent}
Suppose $\left({}^{\vee}{G}^{\Gamma},\mathcal{S}\right)$ is an $E$-group for $G$. Write $z$ for the second invariant of the $E$-group, 
and suppose $z\in (1+\theta_{Z})Z({}^{\vee}G)$.
Let $\psi$ be an essentially unipotent Arthur parameter
for ${}^{\vee}{G}^{\Gamma}$ such that $\psi|_{\mathbf{SL}(2,\mathbb{C})}$ is principal. 
Then 
there exist a cocycle $\mathbf{a}\in H^{1}(W_{\mathbb{R}},Z({}^{\vee}{G}))$ and a
principal unipotent Arthur parameter $\psi_{u}$ for ${}^{\vee}G^{\Gamma}$,
such that for all
$(w,g)\in W_{\mathbb{R}}\times \mathbf{SL}(2,\mathbb{C})$ we have
$$\psi(w,g)=(\mathrm{a}(w),g)\psi_u(w,g),$$
where $\mathrm{a}$ is a representative of $\mathbf{a}$.
Finally, for each strong real form $\delta$ of $G^{\Gamma}$, 
let $\chi(\mathbf{a},\delta)$ be the character
of $G(\delta,\mathbb{R})$ corresponding to 
$\mathbf{a}$ under the map in (\ref{eq:cocylemap}), and let
$\pi(\psi_u,\delta)$ be the character 
of $G(\delta,\mathbb{R})$ attached to $\psi_u$
in point Theorem \ref{theo:unipotentparameter}$(d)$ .
Define
$$\pi(\psi,\delta)=\chi(\mathbf{a},\delta)\pi(\psi_u,\delta).$$ 
Then
\begin{align*}
\Pi^{z}(G/\mathbb{R})_{\psi}^{\mathrm{ABV}}=\{\pi(\psi,\delta)\}_{\delta \text{ strong real form of } G^{\Gamma}}.
\end{align*}
\end{cor}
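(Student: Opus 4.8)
The plan is to reduce Corollary~\ref{cor:essentialunipotent} to Theorem~\ref{theo:unipotentparameter} by ``untwisting'' the essentially unipotent parameter $\psi$ into a genuinely unipotent one, using Proposition~\ref{prop:torsion}(ii) to track the effect of this twist on ABV-packets. First I would analyze the restriction $\psi|_{\mathbb{C}^{\times}}$. Since $\psi$ is essentially unipotent, this restriction has image in $Z({}^{\vee}G)$, and since it is (the $\mathbb{C}^{\times}$-part of) a tempered Langlands parameter it takes the form $z\mapsto z^{\mu}\bar z^{\mu'}$ with $\mu,\mu'$ semisimple (in fact central), $[\mu,\mu']=0$, $\exp(2\pi i(\mu-\mu'))=1$, and with $\mu$ purely imaginary on the relevant real structure (temperedness). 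The data $(\mu,\mu')$ together with $\psi(j)$ define a cocycle of $W_{\mathbb{R}}$ valued in $Z({}^{\vee}G)$: concretely, set $\mathrm{a}(z)=z^{\mu}\bar z^{\mu'}\in Z({}^{\vee}G)$ for $z\in\mathbb{C}^{\times}$ and choose $\mathrm{a}(j)\in Z({}^{\vee}G)$ so that the cocycle identity and $\mathrm{a}(j)^2=\mathrm{a}(-1)$ hold; this is possible precisely because $z\in(1+\theta_Z)Z({}^{\vee}G)$, which is the hypothesis guaranteeing the existence of a principal \emph{unipotent} Arthur parameter with the correct second invariant (Theorem~\ref{theo:unipotentparameter}(b)). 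Then $\psi_u(w,g):=(\mathrm{a}(w)^{-1},1)\psi(w,g)$ is trivial on $\mathbb{C}^{\times}$, hence a unipotent Arthur parameter, and $\psi_u|_{\mathbf{SL}(2,\mathbb{C})}=\psi|_{\mathbf{SL}(2,\mathbb{C})}$ is still principal; this establishes the decomposition $\psi(w,g)=(\mathrm{a}(w),g)\psi_u(w,g)$ claimed in the statement.

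Once the decomposition $\psi=\psi_{u,\mathbf{a}}$ (in the notation of Equation~(\ref{eq:arthurcocycle})) is in place, Proposition~\ref{prop:torsion}(ii) applies directly and gives
$$\Pi^{z}(G/\mathbb{R})_{\psi}^{\mathrm{ABV}}=\bigsqcup_{\delta}\{\pi\otimes\chi(\mathbf{a},\delta):\pi\in\Pi^{z}(G(\mathbb{R},\delta))^{mic}_{\psi_u}\}.$$
Now $\psi_u$ is a principal unipotent Arthur parameter for an $E$-group whose second invariant still satisfies $z\in(1+\theta_Z)Z({}^{\vee}G)$ (twisting by a central cocycle does not change the second invariant), so Theorem~\ref{theo:unipotentparameter}(e), together with part (d), gives $\Pi^{z}(G(\mathbb{R},\delta))^{mic}_{\psi_u}=\{\pi(\psi_u,\delta)\}$ for each strong real form $\delta$, where $\pi(\psi_u,\delta)$ is the character of $G^{can}(\mathbb{R},\delta)$ trivial on the identity component attached to $\psi_u$. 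Substituting, and using that $\chi(\mathbf{a},\delta)\cdot\pi(\psi_u,\delta)$ is exactly the definition of $\pi(\psi,\delta)$ in the statement, yields
$$\Pi^{z}(G/\mathbb{R})_{\psi}^{\mathrm{ABV}}=\{\pi(\psi,\delta)\}_{\delta\text{ strong real form of }G^{\Gamma}},$$
which is the desired conclusion.

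The main obstacle, I expect, is the careful bookkeeping in the first step: writing $\psi|_{\mathbb{C}^{\times}}$ as a central cocycle and, crucially, extending it to a genuine cocycle $\mathrm{a}$ on all of $W_{\mathbb{R}}$ with $\mathrm{a}(j)^2$ matching $\psi(j)^2$ up to the unipotent part, while preserving the second invariant $z$. This is where the hypothesis $z\in(1+\theta_Z)Z({}^{\vee}G)$ must be used — it is precisely the obstruction class for the existence of a unipotent (as opposed to merely essentially unipotent) parameter compatible with $\psi_1$, as recorded in Theorem~\ref{theo:unipotentparameter}(b). I would also need to check that the equivalence class of the resulting $\mathbf{a}\in H^1(W_{\mathbb{R}},Z({}^{\vee}G))$ and of $\psi_u$ are well-defined (independent of the choices of $\mathrm{a}(j)$ and of representatives), but this follows from the fact that two choices differ by a coboundary, and Proposition~\ref{prop:torsion} already asserts that the twisted packet depends only on the class $\mathbf{a}$ and the equivalence class of $\psi_u$. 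The remaining steps — temperedness forcing $\mathrm{a}$ to have the right unitarity so that $\chi(\mathbf{a},\delta)$ is a unitary character, and the compatibility of the $E$-group structure — are routine given the machinery already set up in Sections 5 and 7.
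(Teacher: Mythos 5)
Your proposal is correct and follows essentially the same route as the paper: express $\psi$ as a central twist $(\psi_u)_{\mathbf a}$ of a principal unipotent parameter, then combine Proposition \ref{prop:torsion}(ii) with Theorem \ref{theo:unipotentparameter}(e). The only (cosmetic) difference is the order of construction — the paper takes any unipotent $\psi_u$ extending $\psi_1$ (existence from Theorem \ref{theo:unipotentparameter}(b), which is exactly where $z\in(1+\theta_Z)Z({}^{\vee}G)$ enters) and reads off the cocycle as $(\psi|_{W_{\mathbb{R}}})(\psi_u|_{W_{\mathbb{R}}})^{-1}$, whereas you build $\mathrm{a}$ by hand from $\psi|_{\mathbb{C}^{\times}}$ and a choice of $\mathrm{a}(j)$, whose solvability rests on the same fact (note the correct constraint is $\mathrm{a}(j)\theta_Z(\mathrm{a}(j))=\mathrm{a}(-1)$ rather than $\mathrm{a}(j)^{2}=\mathrm{a}(-1)$).
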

\begin{proof}
Let $\psi$ be an essentially unipotent Arthur parameter
of ${}^{\vee}G^{\Gamma}$ such that $\psi|_{\mathbf{SL}(2,\mathbb{R})}=\psi_1$, is a principal morphism.
Let $\psi_{u}$ be any unipotent Arthur parameter extending $\psi_1$.
It is straightforward to check that
$$(\psi|_{W_{\mathbb{R}}})(\psi_{u}|_{W_{\mathbb{R}}})^{-1},$$
corresponds to a cocycle $a\in Z^{1}(W_{\mathbb{R}},Z({}^{\vee}{G}))$.
The first part of the corollary follows.
For each strong real form $\delta$ of $G^{\Gamma}$ let $\chi(\mathbf{a},\delta)$
and $\pi(\psi_u,\delta)$, be as in the statement of the corollary.  
From Theorem \ref{theo:unipotentparameter}$(e)$  we have 
$$\Pi^{z}(G/\mathbb{R})_{\psi_u}^{\mathrm{ABV}}=\{\pi(\psi_u,\delta)\}_{\delta \text{ strong real form of } G^{\Gamma}}.$$
Hence from Proposition \ref{prop:torsion}($ii)$ we conclude 
\begin{align*}
\Pi(G/\mathbb{R})_{\psi}^{\mathrm{ABV}}
&=\{\pi(\psi_u,\delta)\chi(\mathbf{a},\delta)\}_{\delta \text{ strong real form of } G^{\Gamma}}\\
&=\{\chi(\psi,\delta)\}_{\delta \text{ strong real form of } G^{\Gamma}}
\end{align*}
\end{proof}

The next result fully generalizes Theorem \ref{theo:unipotentparameter} to $E$-groups
admitting essentially principal unipotent Arthur parameters. 
The techniques employed in the proof are the same as the one used to show Theorem \ref{theo:unipotentparameter} in \cite{ABV}(see pages 306-310 \cite{ABV}). They are based on results
coming from Chapters 20-21 \cite{ABV}, which are still valid
in our framework, and from Chapter 27, 
only  proved in \cite{ABV} in the case of representations with infinitesimal character $\mathcal{O}$ arising from homomorphisms $\mathbf{SL}(2,\mathbb{C})\rightarrow {}^{\vee}G$.
Since the infinitesimal character of the representations that we consider here  
differs from $\mathcal{O}$ by a central element, the results on Chapter 27, easily generalize to our setting.
\begin{theo}\label{theo:essentiallyunipotentparameter}
Suppose $\left({}^{\vee}{G}^{\Gamma},\mathcal{S}\right)$ is an $E$-group for $G$. Write $z$ for the second invariant of the $E$-group, and suppose there is $\lambda$, a central element in ${}^{\vee}{\mathfrak{g}}$ with $[\lambda,\theta_{Z}(\lambda)]=0$, such that
$$z\exp(\pi i(\lambda-\theta_{Z}(\lambda)))\in (1+\theta_Z)Z({}^{\vee}G).$$
Fix a principal morphism
\begin{align*}
\psi_{1}:\mathbf{SL}(2,\mathbb{C})\longrightarrow {}^{\vee}{G},
\end{align*}
and write $\mathcal{O}={}^{\vee}G\cdot (\lambda+\lambda_1)$, where 
$\lambda_1=d\psi_{1}
\left(\begin{array}{cc}1/2& 0\\ 
0& 1/2
\end{array}\right)$.
\begin{enumerate}[a)]
\item 
The set of equivalence classes of essentially unipotent Arthur parameters attached to $\psi_{1}$,
with corresponding orbit contained in $X\left(\mathcal{O},{}^{\vee}G^{\Gamma}\right)$,
may be identified with 
$$H^{1}(\Gamma,Z({}^{\vee}{G}))=\{z\in Z({}^{\vee}{G}):z\theta_{Z}(z)=1\}/\{w\theta_{Z}^{-1}(w):w\in Z({}^{\vee}{G})\}.$$
\item The projective representations (of some real form $G^{can}(\mathbb{R},\delta)$) having infinitesimal character $\mathcal{O}$ 
attached to $\psi_{1}$ are precisely the projective characters of type $z$ with infinitesimal character $\mathcal{O}$.
\item 
Let $\delta$ be any strong real form of $G^{\Gamma}$ and write  
$\mathrm{Hom}^{z}_{\mathrm{cont}}({G}^{can}(\mathbb{R},\delta),\mathbb{C}^{\times})$, 
for the set of characters of type $z$ of
$G^{can}(\mathbb{R},\delta)$. 
Then there is a natural map
\begin{align}\label{eq:mapessentially}
H^{1}(\Gamma,Z({}^{\vee}{G}))\rightarrow \mathrm{Hom}^{z}_{\mathrm{cont}}({G}^{can}(\mathbb{R},\delta),\mathbb{C}^{\times}),
\end{align}
whose image is the set of projective characters of $G^{can}(\mathbb{R},\delta)$ of type $z$, 
having infinitesimal character $\mathcal{O}$.
\item 
Suppose $\psi$ is an essentially unipotent Arthur parameter attached to $\psi_1$ 
with corresponding orbit contained in $X\left(\mathcal{O},{}^{\vee}G^{\Gamma}\right)$. Let $\delta$
be a strong real form of $G^{\Gamma}$. Write $\pi(\psi,\delta)$ for the projective character of 
type $z$ of $G^{can}(\mathbb{R},\delta)$ 
attached to $\psi$ by composing the bijection of $a)$ with the map of $c)$. Let $P(\psi,\delta)$, be the perverse 
sheaf on $X\left({}^{\vee}G^{\Gamma}\right)$ corresponding to $\pi(\psi,\delta)$
under the Local Langlands Correspondence (see Theorem \ref{theo:4.1}). Then for all perverse sheaf $P$ on $X\left({}^{\vee}G^{\Gamma}\right)$, we have   
$$\chi_{S_{\psi}}^{mic}(P)=\left\{\begin{array}{cl}
1& \text{if }P\cong P(\psi,\delta),\quad\text{ for some strong real form }\delta\text{ of }G\\
0& otherwise.
\end{array}\right.
$$
Consequently,
\begin{align}\label{eq:repunipotenttordu}
\Pi^{z}(G/\mathbb{R})_{\psi}^{\mathrm{ABV}}=\{\pi(\psi,\delta)\}_{\delta \text{ strong real form of } G^{\Gamma}}.
\end{align}
\end{enumerate}
\end{theo}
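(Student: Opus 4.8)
The plan is to reduce Theorem \ref{theo:essentiallyunipotentparameter} to the unipotent case (Theorem \ref{theo:unipotentparameter}) by a twisting argument closely parallel to Corollary \ref{cor:essentialunipotent}, only upgraded so that the central twist is now permitted to contribute a nontrivial (central) semisimple part to the infinitesimal character. The starting point for part $a)$ is that any essentially unipotent Arthur parameter $\psi$ with $\psi|_{\mathbf{SL}(2,\mathbb{C})}=\psi_1$ has image on $\mathbb{C}^{\times}$ landing in $Z({}^{\vee}G)$; comparing $\psi$ with a fixed unipotent $\psi_u$ extending $\psi_1$ produces a cocycle $\mathbf{a}\in H^1(W_{\mathbb{R}},Z({}^{\vee}G))$, and the condition $z\exp(\pi i(\lambda-\theta_Z(\lambda)))\in(1+\theta_Z)Z({}^{\vee}G)$ is exactly what guarantees (via Theorem \ref{theo:unipotentparameter}$(b)$, applied after the twist by $\exp(\pi i\lambda)$) that such a $\psi_u$ exists with the prescribed infinitesimal character orbit $\mathcal{O}$. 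The bookkeeping of which cocycles give genuinely inequivalent parameters with orbit inside $X(\mathcal{O},{}^{\vee}G^{\Gamma})$ is the same $H^1(\Gamma,Z({}^{\vee}G))$-torsor computation as in Theorem \ref{theo:unipotentparameter}$(b)$; one just has to check that the extra central semisimple datum $\lambda$ does not enlarge or shrink this set, which follows because changing $\lambda$ by an element of $Z({}^{\vee}\mathfrak{g})$ is absorbed into the isomorphism of varieties $X(\mathcal{O}_\varphi,{}^{\vee}G^{\Gamma})\xrightarrow{\sim} X(\mathcal{O}_{\varphi_{\mathbf a}},{}^{\vee}G^{\Gamma})$ built in the proof of Proposition \ref{prop:torsion}.

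For parts $b)$ and $c)$ I would transport Theorem \ref{theo:unipotentparameter}$(c)$--$(d)$ across this isomorphism of geometric-parameter spaces. Concretely: the map $(y,\Lambda)\mapsto(\exp(\pi i\lambda)y,\Lambda+\lambda)$ identifies $X(\mathcal{O}_u,{}^{\vee}G^{\Gamma})$ (where $\mathcal{O}_u={}^{\vee}G\cdot\lambda_1$ is the unipotent infinitesimal character) with $X(\mathcal{O},{}^{\vee}G^{\Gamma})$, is ${}^{\vee}G$-equivariant, hence induces bijections on orbits, on complete geometric parameters, and an isomorphism of Grothendieck groups carrying $P(\xi)$ to $P(\xi')$ and preserving every microlocal multiplicity, exactly as recorded in Equation (\ref{eq:tensorcycles}). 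On the representation-theoretic side, the Local Langlands Correspondence (Theorem \ref{theo:4.1}) intertwines this isomorphism with tensoring by the character $\chi(\mathbf a,\delta)$ of Equation (\ref{eq:cocylemap}); since tensoring a unipotent representation (a character trivial on $G^{can}(\mathbb{R},\delta)_0$) by $\chi(\mathbf a,\delta)$ yields a projective character of type $z$ with infinitesimal character $\mathcal{O}$, and conversely every such projective character arises this way, parts $b)$ and $c)$ follow from their unipotent counterparts. I would then note, as the statement of the theorem already flags, that the only place where the new infinitesimal character could cause trouble is Chapter 27 of \cite{ABV}, whose arguments are carried out there only for orbits $\mathcal{O}$ arising directly from $\mathbf{SL}(2,\mathbb{C})\to{}^{\vee}G$; here $\mathcal{O}$ differs from such an orbit by the central translate $\lambda$, and the relevant structures (the groups $P(\lambda')$, $L(\lambda')$, the $K(y)$-orbit geometry on the partial flag variety, and the Morse-theoretic computations of $\chi^{mic}$) are unchanged under a central translation of $\lambda$, so the generalization is routine.

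Part $d)$ is then the combination: by the microlocal multiplicity formula of Theorem \ref{theo:unipotentparameter}$(e)$ and the multiplicity-preservation in (\ref{eq:tensorcycles}), for any perverse sheaf $P$ on $X({}^{\vee}G^{\Gamma})$ we get $\chi^{mic}_{S_\psi}(P)=\chi^{mic}_{S_{\psi_u}}(P')$ where $P'$ is the image of $P$ under the inverse isomorphism, and the latter is $1$ precisely when $P'\cong P(\psi_u,\delta)$ for some strong real form $\delta$, i.e.\ when $P\cong P(\psi,\delta)$ with $\pi(\psi,\delta)=\chi(\mathbf a,\delta)\pi(\psi_u,\delta)$, and $0$ otherwise. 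Feeding this into the definition of the ABV-packet (Definition \ref{deftn:abvpackets}) gives $\Pi^{z}(G/\mathbb{R})_{\psi}^{\mathrm{ABV}}=\{\pi(\psi,\delta)\}_{\delta}$, as claimed. The main obstacle I anticipate is not any single hard computation but rather the careful verification that the Chapter 27 machinery of \cite{ABV} — in particular the identification of $\chi^{mic}_{S_\psi}$ with a genuine local-system multiplicity and the associated vanishing statement — really does go through verbatim after a central shift of infinitesimal character; once that is granted, everything else is transport of structure along an explicit isomorphism of varieties together with the twisting bookkeeping already established in Proposition \ref{prop:torsion} and Corollary \ref{cor:essentialunipotent}.
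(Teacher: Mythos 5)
There is a genuine gap at the very first step of your reduction. You compare $\psi$ with ``a fixed unipotent $\psi_u$ extending $\psi_1$'', but under the hypotheses of the theorem no such $\psi_u$ need exist for ${}^{\vee}G^{\Gamma}$. A unipotent Arthur parameter attached to $\psi_1$ has $\psi_u(j)=g\,{}^{\vee}\delta$ commuting with the principal $\mathbf{SL}(2,\mathbb{C})$, which forces $g\in Z({}^{\vee}G)$ (Theorem \ref{theo:unipotentparameter}$(a)$) and hence $1=\psi_u(j)^2=g\theta_Z(g)z$, i.e. $z\in(1+\theta_Z)Z({}^{\vee}G)$. The hypothesis of Theorem \ref{theo:essentiallyunipotentparameter} is only that $z\exp(\pi i(\lambda-\theta_Z(\lambda)))\in(1+\theta_Z)Z({}^{\vee}G)$, which does not imply $z\in(1+\theta_Z)Z({}^{\vee}G)$; indeed the case $z\in(1+\theta_Z)Z({}^{\vee}G)$ is exactly Corollary \ref{cor:essentialunipotent}, and the whole point of the theorem is to remove that restriction. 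Equivalently, the central part $\psi|_{\mathbb{C}^{\times}}$ cannot in general be split off as a cocycle in $Z^{1}(W_{\mathbb{R}},Z({}^{\vee}G))$ (the obstruction is precisely the nontriviality of the class $\overline{z}$), so the twisting mechanism of Proposition \ref{prop:torsion}, which operates within one fixed $E$-group, has no unipotent base point to start from. Your parenthetical fix --- invoking Theorem \ref{theo:unipotentparameter}$(b)$ ``after the twist by $\exp(\pi i\lambda)$'' --- would place the unipotent parameter on a weak $E$-group with a different invariant, and comparing parameters, type-$z$ representations and microlocal multiplicities across non-isomorphic weak $E$-groups is not the transport of structure of Proposition \ref{prop:torsion}; that isomorphism of varieties $(y,\Lambda)\mapsto(\exp(\pi i\lambda_{\mathbf a})y,\Lambda+\lambda_{\mathbf a})$ is constructed there for a cocycle twist inside a single ${}^{\vee}G^{\Gamma}$.

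The paper's proof avoids this by never leaving the given $E$-group: it fixes a reference \emph{essentially} unipotent parameter $\psi_0$ attached to $\psi_1$ with orbit in $X(\mathcal{O},{}^{\vee}G^{\Gamma})$ and parametrizes all others by the cocycles $(\psi|_{W_{\mathbb{R}}})(\psi_0|_{W_{\mathbb{R}}})^{-1}\in H^{1}(\Gamma,Z({}^{\vee}G))$, giving $(a)$. The base characters are then built directly rather than by twisting a unipotent character: for quasi-split $\delta_0$ the $L$-packet of $\varphi_{\psi_0}$ is a single projective character $\chi(\psi_0,\delta_0)$ of type $z$, which is transported to an arbitrary strong real form $\delta$ via a maximally split Cartan subgroup and Lemma 2.5.2 of \cite{Adams-Johnson}; the map in $(c)$ is $\mathbf{a}\mapsto\chi(\mathbf{a},\delta)\chi(\psi_0,\delta)$, with surjectivity inherited from (\ref{eq:mapunipotent}). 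Part $(b)$ rests on the observation that the infinitesimal character of $\mathcal{O}$ is that of a one-dimensional representation, so Corollary 27.13 of \cite{ABV} applies once one checks (as you correctly anticipate, and as the paper does) that the Chapter 27 arguments survive the central shift; and $(d)$ is proved by rerunning the page-309 argument of \cite{ABV} with these characters and with $(b)$ in place of Theorem \ref{theo:unipotentparameter}$(c)$. So your instinct that the analytic core is the Chapter 27 generalization is right, but the reduction you propose to get there collapses except in the case already covered by Corollary \ref{cor:essentialunipotent}; you would need to replace the unipotent base point by an essentially unipotent one and construct its characters by hand, which is what the paper does.
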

\begin{proof}
For $(a)$ we start by fixing an essentially unipotent Arthur parameter 
$\psi_0$ attached to $\psi_1$. Notice that for each essentially unipotent Arthur parameter 
$\psi$ attached to $\psi_1$, the equivalence class of the product
$$(\psi|_{W_{\mathbb{R}}})(\psi_{0}|_{W_{\mathbb{R}}})^{-1},$$
defines a cocycle $\mathbf{a}_{\psi}\in H^{1}(\Gamma,Z({}^{\vee}{G}))$. Therefore, each
essentially unipotent Arthur parameter $\psi$ attached to $\psi_1$ can be written as 
$$\psi(w,g)=(\mathrm{a}_{\psi}(w),g)\psi_0(w,g),$$
where $\mathrm{a}_{\psi}$ is a representative of $\mathbf{a}$. Point $(a)$ follows. 

For $(b)$ we notice that the infinitesimal character corresponding to $\mathcal{O}$ (see Lemma 15.4 \cite{ABV})
is the infinitesimal character of a one dimensional representation, 
so the corresponding maximal ideal in $U(\mathfrak{g})$ (see Theorem 21.8 \cite{ABV}) is the annihilator of a one dimensional representation. Point $(b)$, like Theorem \ref{theo:unipotentparameter}(c),  
follows from Corollary 27.13 \cite{ABV}. We notice that  
Corollary 27.13 \cite{ABV} has been demonstrated in \cite{ABV}, only in the case when the infinitesimal character $\mathcal{O}$ arises from a homomorphism of $\mathbf{SL}(2,\mathbb{C})$ into ${}^{\vee}G$, but is easily generalized to our setting. Indeed, Corollary 27.13 \cite{ABV}
is a consequence of Theorem 27.10 and Theorem 27.12 \cite{ABV}, 
whose proof are still valid in the case of our infinitesimal character $\mathcal{O}$,
and of Theorem 21.6 and Theorem 21.8 \cite{ABV}, whose proof are valid for any infinitesimal
character.

For (c), we start as in (a) by fixing an essentially unipotent Arthur parameter 
$\psi_0$ attached to $\psi_1$. Write $\varphi_{\psi_0}$ for 
the corresponding Langlands parameter. Let $\delta$ be a strong real form 
of $G^{\Gamma}$, whose associated real form is quasi-split. Then the Langlands packets
$\Pi^{z}(G(\mathbb{R},\delta))_{\varphi_{\psi_0}}$ contains exactly one representation,
a canonical projective character of type $z$ that we denote $\chi(\psi,\delta_0)$.
Next, for each strong real form $\delta$ of $G^{\Gamma}$ define a canonical projective character 
$\chi(\psi,\delta)$ of type $z$, as follows; Suppose $T$ is a Cartan subgroup of $G$
with $\text{Ad}(\delta)T=T$, and such that $T(\mathbb{R},\delta)$
is a maximally split Cartan subgroup of $G(\mathbb{R},\delta)$. 
Let $g\in G$ be such that $\delta=\text{Ad}(g)\delta$,
then $gTg^{-1}(\mathbb{R},\delta)$ defines a Cartan subgroup of $G(\mathbb{R},\delta)$. 
For any $t\in T(\delta',\mathbb{R})$ set 
$$\pi(\psi,\delta)(t)=\pi(\psi,\delta_0)(\text{Ad}(g)t).$$
Then the conditions of Lemma 2.5.2 \cite{Adams-Johnson} hold, and $\chi(\psi,\delta)$ extends uniquely to a one-dimensional representation, also denoted $\chi(\psi,\delta)$
of $G(\delta',\mathbb{R})$.
Now, for each essentially unipotent Arthur parameter $\psi$ attached to $\psi_1$, 
let 
$\mathbf{a}_{\psi}\in H^{1}(\Gamma,Z({}^{\vee}{G}))$ be defined as in $(a)$.
From \ref{theo:unipotentparameter}(d), we know how to attach to each couple 
$(\mathbf{a}_{\psi},\delta)$ a character 
$\chi(\mathbf{a}_{\psi},\delta)\in \mathrm{Hom}_{\mathrm{cont}}({G}^{can}(\mathbb{R},\delta)/{G}^{can}(\mathbb{R},\delta)_{0},\mathbb{C}^{\times})$. 
We can now define the natural map of $(c)$;  
for each strong real form $\delta$ of $G^{\Gamma}$ we define (\ref{eq:mapessentially}) by sending 
$\mathbf{a}_{\psi}\in H^{1}(\Gamma,Z({}^{\vee}{G}))$ to
$$\mathbf{a}_{\psi}\mapsto \chi(\mathbf{a}_{\psi},\delta)\chi({\psi_0},\delta).$$
The surjectivity of (\ref{eq:mapessentially}) in the set of projective characters of $G^{can}(\mathbb{R},\delta)$ of type $z$, 
having infinitesimal character $\mathcal{O}$, is a consequence of the surjectivity of (\ref{eq:mapunipotent}). 
 
Finally, the proof of $(d)$ can be done along the same lines as that of Theorem \ref{theo:unipotentparameter}(e) (see page 309 \cite{ABV}). For each strong real form 
of $G$ we just need to replace the characters appearing in \ref{theo:unipotentparameter}(d) 
with the ones in the image of the map on (c), 
and use point (b) instead of \ref{theo:unipotentparameter}(c).
\end{proof}
\subsection{The Adams-Johnson construction}
In this section we study ABV-packets attached to a particular family of Arthur parameters,
namely those related to representations with cohomology. In \cite{Adams-Johnson}, Adams and Johnson
attached to any Arthur parameter in this family a packet consisting of 
representations cohomologically induced from unitary characters. Moreover, they proved that each packet defined in this way is the support of a stable distribution (see Theorem (2.13) \cite{Adams-Johnson}), and that these distributions satisfy the ordinary endoscopic identities predicted by the theory of endoscopy (see Theorem (2.21) \cite{Adams-Johnson}). 
The objective of this section is to show that the packets defined by Adams-Johnson are ABV-packets, that is, for the family of Arthur parameters studied in \cite{Adams-Johnson},
the packet associated in (\ref{deftn:abvpackets}) to any  
parameter in this family, coincides with the packet defined by Adams and Johnson.
This will follow as a corollary of a  
more general result (see Theorem \ref{theo:micropacket} below),
where under some hypothesis on the Langlads parameter $\varphi$ of $G$, we are able to reduce the description of the micro-packet corresponding to $\varphi$, 
to the description of the micro-packet of a Levi subgroup of $G$.

We begin the section by describing the family of Arthur parameters studied in \cite{Adams-Johnson} 
 and the construction of Adams-Johnson packets. 
 The description of the parameters and of the Adams-Johnson packets that we give here, is inspired by Section 3.4.2.2 \cite{Otaibi} and by Section 5 \cite{Arthur89}. 
 The main difference with these two references is that in this article, we use the Galois form of the $L$-group instead of the Weil form. By doing this we are able to describe the Adams-Johnson construction 
in the language of extended groups of \cite{ABV}. The only complication in using the Galois form is that to define the packets of Adams-Johnson it will be necessary to work with 
an $E$-group of some Levi subgroup of $G$, and consequently to use the canonical cover (see Definition 10.1 \cite{ABV} and Section 3) of this Levi subgroup, and to work with some projective characters of strong real forms of this cover.\\

Suppose 
$\left({}^{\vee}{G}^{\Gamma},\mathcal{S}\right)$ is an $L$-group for $G$. 
We recall that
$\mathcal{S}$ is a ${}^{\vee}{G}$-conjugacy class of pairs 
$\left({}^{\vee}\delta,{}^{d}{B}\right)$, with 
${}^{d}{B}$ a Borel subgroup ${}^{d}{B}$ of ${}^{\vee}G$
and 
${}^{\vee}\delta$ 
an element of order two in ${}^{\vee}G^{\Gamma}-{}^{\vee}{G}$ such that conjugation by 
${}^{\vee}\delta$ is a distinguished involutive automorphism 
$\sigma_{{}^{\vee}\delta}$ of ${}^{\vee}G$
preserving ${}^{d}B$.
Since $\sigma_{{}^{\vee}\delta}$ is distinguished, it preserves a splitting $\left({}^{\vee}G,{}^{d}{B},{}^{d}{T},\{X_{\alpha}\}\right)$ of $^{\vee}G$ that we fix from now on.
 We notice that the $L$-group ${}^{\vee}G^{\Gamma}$ can be more explicitly 
described as the disjoint union of 
${}^{\vee}G$ and the coset ${}^{\vee}G{}^{\vee}\delta$, with multiplication on ${}^{\vee}G^{\Gamma}$ defined by the rules:
$$(g_1{}^{\vee}\delta)(g_2{}^{\vee}\delta)=g_1\sigma_{{}^{\vee}\delta}(g_2) ,\qquad 
(g_1{}^{\vee}\delta)(g_2)=g_1\sigma_{{}^{\vee}\delta}(g_2){}^{\vee}\delta$$
and the obvious rules for the other two kinds of products. This explicit description of ${}^{\vee}G^{\Gamma}$, amounts to the 
 usual description of an $L$-group
as the semi-direct product of ${}^{\vee}G$ with the Galois group.

Now, let $\psi$ be an Arthur parameter for ${}^{\vee}G^{\Gamma}$ and recall the definition $W_{\mathbb{R}}=\mathbb{C}^{\times}\cup j\mathbb{C}^{\times}$. 
The restriction of $\psi$ to $\mathbb{C}^{\times}$ takes the following form (cf. Proposition 5.6 \cite{ABV}): there exist 
$\lambda$, $\lambda'\in X_{\ast}({}^{d}{T})\otimes\mathbb{C}$ with 
$\lambda-\lambda'\in X_{\ast}({}^{d}{T})$
such that
\begin{align}\label{eq:AJparameterequation0}
\psi(z)=z^{\lambda}\bar{z}^{\lambda'}.
\end{align}
We may suppose $\lambda$ is dominant for ${}^{d}{B}$.
Let ${}^{d}{L}$ be the centralizer 
of $\psi(\mathbb{C}^{\times})$ in ${}^{\vee}{{G}}$ and write ${}^{d}\mathfrak{l}$ for its lie algebra. We have
$$Z\left({}^{d}{L}\right)_{0}\subset {}^{d}{T}\subset{}^{d}{L}$$
and
$$\lambda,\lambda'\in X_{\ast}\left(Z\left({}^{d}{L}\right)_{0}\right)\otimes\mathbb{C}\quad \text{with}\quad
\lambda-\lambda'\in X_{\ast}\left(Z\left({}^{d}{L}\right)_{0}\right).
$$
Set $\psi(j)=n{}^{\vee}\delta$ where $n\in{}^{\vee}G$. Then
$$\text{Ad}(n{}^{\vee}\delta)(\lambda)=\lambda'\quad \text{and} \quad (n{}^{\vee}\delta)^{2}=\psi(-1)=(-1)^{\lambda+\lambda'}.$$
Set ${}^{d}{B}_{L}={}^{d}{L}\cap {}^{d}{B}$ and write ${}^{\vee}{\rho}_{{}^{d}{L}}$ for the 
half-sum of the positive coroots defined by the system of positive roots 
$R\left({}^{d}{B}_{{}^{d}{L}},{}^{d}{T}\right)$. 
The family of Arthur parameters studied by Adams and Johnson are those which satisfy the following properties:
\begin{enumerate}
\item[AJ1.] The identity component of $Z\left({}^{d}{L}\right)^{\psi(j)}$ is contained in $Z\left({}^{\vee}{G}\right)$. 
\item[AJ2.] $\psi(\mathbf{SL}(2,\mathbb{C}))$ contains a principal unipotent element of ${}^{d}{L}$.
\item[AJ3.] $\left<\lambda+{}^{\vee}{\rho}_{{}^{d}{L}},\alpha\right>\neq 0$ for all root 
$\alpha\in R\left({}^{\vee}{G},{}^{d}{T}\right)$.
\end{enumerate}
Let $S=\psi(\mathbf{SL}(2,\mathbb{C}))$. 
It is immediate from (AJ2), that $S$ is $\mathbf{SL}_2$-principal in ${}^{\vee}{G}$. Write $\mathfrak{s}$ 
for the lie algebra of $S$, and let $(h,e,f)$ be a $\mathfrak{sl}_{2}$-triplet generating $\mathfrak{s}$. Up to conjugation we can, and will, suppose that
$$h={}^{\vee}{\rho}_{{}^{d}{L}}\in {}^{d}{T}.$$
Since $\psi(j)=n{}^{\vee}\delta$ commutes with $S$, conjugation by $n{}^{\vee}\delta$ fixes $h$, and because $h$ is regular in ${}^{d}{L}$, $n{}^{\vee}\delta$ normalizes ${}^{d}{T}$. It is also
obvious that $n{}^{\vee}\delta$ normalizes ${}^{d}{L}$ and $Z\left({}^{d}{L}\right)$. Now, ${}^{\vee}\delta$ normalizes ${}^{d}{T}$, 
so $n$ normalizes ${}^{d}{T}$.
From now on we assume that ${}^{\vee}{G}$ is semi-simple. We make this assumption just to simplify the exposition that follows, the conclusions in
(\ref{eq:AJLparameter}) and (\ref{eq:AJLparameter2})
remain true in the reductive case.
Point (AJ1) is therefore equivalent to\\

$\mathrm{AJ1}^{\prime}$. 
$Z({}^{d}{L})_{0}^{\psi(j)}$ is trivial.\\
~\\
As $\mathbb{R}^{\times}$ is the center of $W_{\mathbb{R}}$, 
the group $\psi(\mathbb{R}^{\times})$ 
commutes with $\psi(\mathbb{R}^{\times}\times \mathbf{SL}(2,\mathbb{C}))$. Hence
$\psi(\mathbb{R}^{\times})\subset Z\left({}^{d}{L}\right)^{\psi(j)}$ and 
it follows that $\psi(\mathbb{R}_{+}^{\times})\subset Z\left({}^{d}{L}\right)_{0}^{\psi(j)}$
is trivial. Consequently,
$\lambda+\lambda'=0$
and we can write
$$\psi(z)=z^{\lambda}\bar{z}^{-\lambda}=\left(\frac{z}{|z|}\right)^{2\lambda}.$$
Hence $\psi(j)^{2}=(-1)^{2\lambda}$. In particular Ad$(\psi(j))$
is a linear automorphism of order two of ${}^{\vee}{\mathfrak{g}}$. It is semi-simple
with eigenvalues equal to $\pm 1$. Condition (AJ1$^{\prime}$) is then equivalent to
$$\text{Ad}(\psi(j))|_{\mathfrak{z}({}^{d}\mathfrak{l})}=-\text{Id}|_{\mathfrak{z}({}^{d}\mathfrak{l})}.$$
The following arguments are taken from 3.4.2.2 \cite{Otaibi}. 

Let $\mathbf{n}:W\left({}^{\vee}{G},{}^{d}{T}\right)\rtimes \Gamma\rightarrow N\left({}^{\vee}{G},{}^{d}{T}\right)\rtimes \Gamma$ be the section defined in Section 2.1 \cite{Langlands-Shelstad}. 
Let $w_0$ be the longest element in $W({}^{\vee}{G},{}^{d}{T})$ and let $w_{{}^{d}{L}}$
be the longest element in $W\left({}^{d}{L},{}^{d}{T}\right)$. Write 
\begin{align}\label{eq:n1}
n_{1}{}^{\vee}\delta=\mathbf{n}(w_{0}w_{{}^{d}{L}}{}^{\vee}\delta). 
\end{align}
Let $\Delta\left({}^{d}{B}_{{}^{d}{L}},{}^{d}{T}\right)$, be the set of simple roots of the positive root system $R\left({}^{d}{B}_{{}^{d}{L}},{}^{d}{T}\right)$. Since $w_0$ (resp. $w_{{}^{d}{L}}$) sends positive roots in  $R\left({}^{d}{B},{}^{d}{T}\right)$ (resp. $R\left({}^{d}{B}_{{}^{d}{L}},{}^{d}{T}\right)$) 
to negative roots, and because $\sigma_{{}^{\vee}\delta}$ preserves the splitting $\left({}^{\vee}G,{}^{d}{B},{}^{d}{T},\{{X}_{\alpha}\}\right)$, we can conclude that $n_{1}{}^{\vee}\delta$ preserves $\Delta\left({}^{d}{B}_{{}^{d}{L}},{}^{d}{T}\right)$.
Moreover, $n_{1}{}^{\vee}\delta$ acts as -Id on $\mathfrak{z}({}^{d}\mathfrak{l})$ and  
by $t\mapsto t^{-1}$ in $Z\left({}^{d}{L}\right)$.  
In particular, Ad$(n_{1}{}^{\vee}\delta)\lambda=-\lambda=\lambda'$. Thus  
$$(n_{1}{}^{\vee}\delta)\psi(z)(n_{1}{}^{\vee}\delta)^{-1}=\psi(\overline{z}),$$
the element  $nn_{1}^{-1}$ commutes with $\psi(\mathbb{C}^{\times})$, 
and we have $nn_{1}^{-1}\in {}^{d}{L}$.  
Furthermore, from Proposition 9.3.5 \cite{Springer}, $w_0w_{{}^{d}{L}}{}^{\vee}\delta$ preserves the splitting  
$\left({}^{d}{L},{}^{d}{B}_{{}^{d}{L}},{}^{d}{T},\{{X}_{\alpha}\}_{\alpha\in \Delta\left({}^{d}{B}_{{}^{d}{L}},{}^{d}{T}\right)}\right)$. Hence $n_{1}{}^{\vee}\delta$ commutes with $S$, and we can say the same for $nn_{1}^{-1}$. Now, $S$ is principal in ${}^{\vee}{G}$ so $nn_{1}^{-1}\in Z\left({}^{d}{L}\right)$
and there exists $t\in Z\left({}^{d}{L}\right)$ such that 
\begin{align}\label{eq:AJparameterequation01}
tn_1=n.
\end{align}
We compute
\begin{align}\label{eq:ajparameteridentity1}
(-1)^{2\lambda}=\psi(j)^{2}=(n{}^{\vee}\delta)^{2}=&(t n_{1}{}^{\vee}\delta)^{2}=t(n_1{}^{\vee}\delta)t(n_1{}^{\vee}\delta)^{-1}(n_1{}^{\vee}\delta )^{2}\\
=&tt^{-1}(n_1{}^{\vee}\delta)^{2}=(n_1{}^{\vee}\delta)^{2}.\nonumber
\end{align}
Let ${}^{d}{Q}={}^{d}{L}{}^{d}{U}$ be the parabolic subgroup of ${}^{\vee}{G}$ containing 
${}^{d}{T}$ such that the roots of ${}^{d}{T}$ in ${}^{d}{Q}$ are the 
$\alpha\in R\left({}^{\vee}{G},{}^{d}T\right)$ satisfying $\left<\lambda,\alpha\right>\geq 0$.
In particular ${}^{d}{L}$ is a Levi factor of ${}^{d}{Q}$.
The section 
$\mathbf{n}:W\left({}^{\vee}{G},{}^{d}{T}\right)\rtimes \Gamma\rightarrow N\left({}^{\vee}{G},{}^{d}{T}\right)\rtimes \Gamma$
has the property that 
\begin{align}\label{eq:ajparameteridentity}
(n_{1}{}^{\vee}\delta)^{2}=\mathbf{n}(w_0w_{{}^{d}{L}})=\prod_{\alpha\in R\left({}^{d}{U},{}^{d}{T}\right)}\alpha^{\vee}(-1).
\end{align} 
Therefore,
$\prod_{\alpha\in R\left({}^{d}{U},{}^{d}T\right)}\alpha^{\vee}(-1)=(-1)^{2\lambda}$
and from Proposition 1.3.5 \cite{Shelstad81} we can conclude
$$\lambda\in X_{\ast}\left(Z\left({}^{d}{L}\right)\right)+\frac{1}{2}\sum_{\alpha\in R\left({}^{d}{U},{}^{d}T\right)}\alpha^{\vee}.$$
Let
\begin{align}\label{eq:AJLparameter}
\mu=\lambda+{}^{\vee}{\rho}_{{}^{d}{L}},\quad \mu'=\mu=-\lambda+{}^{\vee}{\rho}_{{}^{d}{L}}.
\end{align}
Then
\begin{align}\label{eq:higuest}
\mu,\mu'\in X_{\ast}({}^{d}{T})+\frac{1}{2}\sum_{\alpha\in R\left({}^{d}{B},{}^{d}T\right)}\alpha^{\vee}
\end{align}
and
$$\text{Ad}(\psi(j))\cdot\mu=\mu'.$$
The Langlands parameter corresponding to $\psi$ verifies for every $z\in\mathbb{C}^{\times}$
\begin{align}\label{eq:AJLparameter2}
\varphi_{\psi}(z)=z^{\mu}\bar{z}^{\mu'}.
\end{align}
We point out that from point (AJ3) above,
$\mu$ is regular, and from (\ref{eq:higuest}) $\mu$ is the infinitesimal character of a finite-dimensional representation of some real form of $G$ with highest weight (relative to the root system  
$R\left({}^{d}{B},{}^{d}T\right)$) equal to
$$\mu-\frac{1}{2}\sum_{\alpha\in R\left({}^{d}{B},{}^{d}{T}\right)}\alpha^{\vee}=\lambda-\frac{1}{2}\sum_{\alpha\in R\left({}^{d}{U},{}^{d}{T}\right)}\alpha^{\vee}.$$

This concludes the description of the Arthur parameters studied in \cite{Adams-Johnson}. To
define the cohomology packets we need first to connect the Levi subgroup ${}^{d}{L}$ 
of ${}^{\vee}G$ to an extended group for some Levi subgroup $L$ of $G$, and factor $\psi$
through an Arthur parameter $\psi_L$ of some $E$-group for $L$. The second invariant of this $E$-group 
will be equal to $(n_1{}^{\vee}\delta)^{2}$, and since this element is not necessarily one, it is not
always possible to factor $\psi$ through the Arthur parameter of 
an $L$-group for $L$. 
Because of the Local Langlands Correspondence, this will have as a consequence the necessity of using the canonical cover of $L$ (see Definition 10.1 \cite{ABV} and Section 3).

The final step in the construction of the packets is to use Theorem \ref{theo:essentiallyunipotentparameter}
to associate to $\psi_L$ a family of canonical projective characters of strong real forms of $L$ of type $(n_1{}^{\vee}\delta)^{2}$ and apply cohomological induction to them.\\

Let $\delta_{qs}$ be a strong real form of $G^{\Gamma}$,
whose associated real form $\sigma_{\delta_{qs}}$ is quasi-split.  
Write $\theta_{\delta_{qs}}$ for the corresponding Cartan involution
(see Equation (5d)-(5g) \cite{AVParameters}). 
Suppose $T$ is a Cartan subgroup of $G$ stable under $\sigma_{\delta_{qs}}$ and $\theta_{\delta_{qs}}$. 
We have a canonical isomorphism between the based root data
\begin{align}\label{eq:rootdatarelation}
(X^{\ast}({T}),\Delta^{\vee},X_{\ast}({T}),\Delta)\quad\text{and}
\quad \left(X^{\ast}({}^{d}{T}),{}^{d}\Delta,X_{\ast}({}^{d}{T}),{}^{d}\Delta^{\vee}\right).
\end{align}
Using this isomorphism, to the couple $\left({}^{d}{Q},{}^{d}{L}\right)$ we can associate a parabolic group $Q=LU$ 
of $G$ containing $T$ such that ${}^{\vee}L\cong {}^{d}{L}$.
Now, conjugation by the element $n_1{}^{\vee}\delta$ defined in (\ref{eq:n1})
preserves the splitting  $\left({}^{d}{L},{}^{d}{B}_{{}^{d}{L}},{}^{d}{T},\{{X}_{\alpha}\}_{\alpha\in \Delta\left({}^{d}{B}_{{}^{d}{L}},{}^{d}{T}\right)}\right)$, so the isomorphism
${}^{\vee}L\cong {}^{d}{L}$ transfer it to a distinguished involutive automorphism of ${}^{\vee}L$. Write $a_L$ for 
the automorphism of the based root datum of ${}^{\vee}L$ (or equivalently
of the based root datum of $L$) induced in this way by $n_1{}^{\vee}\delta$.  
We define (see Proposition 2.16(c) \cite{ABV})
\begin{align}\label{eq:extendedgroupL}
L^{\Gamma} \text{ to be the extended group of $L$ with invariants }(a_L,\overline{1}).  
\end{align}
Let $a_G$ be the automorphism of the based root datum 
of $G$ induced by ${}^{\vee}\delta$ through duality. Since $n_{1}{}^{\vee}\delta$ 
induces the same automorphism, we have $a_G|_{\Psi_{0}(L)}=a_L$. Consequently, 
each real form in the inner class corresponding to $a_L$, is the restriction 
to $L$ of a real form in the inner class corresponding to $a_G$,  
 that is $L^{\Gamma}\subset G^{\Gamma}$. 
Next, 
 write 
\begin{align}\label{eq:sinvariant7}
z=(n_1{}^{\vee}\delta)^{2}
\end{align}
and define (see Proposition 4.7(c) \cite{ABV}) 
\begin{align}\label{eq:e-groupL}
\left({}^{\vee}L^{\Gamma},\mathcal{S}_L\right) \text{ to be the unique } E\text{-group with invariants }(a_L,z).
\end{align}
More precisely,
let ${}^{\vee}{\sigma}_{L}$ be any distinguished automorphism of ${}^{\vee}L$ 
corresponding to $a_L$. Then the group 
${}^{\vee}L^{\Gamma}$ is defined as the union
of ${}^{\vee}{L}$ and the set ${}^{\vee}{L}^{\vee}\delta_{L}$ of formal symbols $l{}^{\vee}\delta_{L}$, with multiplication defined according to the rules: 
$$(l_1{}^{\vee}\delta_L)(l_2{}^{\vee}\delta_L)=l_1\sigma_{L}(l_2) z,\qquad (l_1{}^{\vee}\delta_L)(l_2)=l_1\sigma_{L}(l_2){}^{\vee}\delta_L$$
and the obvious rules for the other two kinds of product. 

We explain now how to extend the isomorphism ${}^{\vee}{L}\cong{}^{d}{{L}}$ 
to an embedding
\begin{align}\label{eq:embedding1}
\iota_{{L},{G}}:{}^{\vee}L^{\Gamma}\longrightarrow {}^{\vee}{G}^{\Gamma}.
\end{align}
Since ${}^{\vee}L^{\Gamma}$ 
is the disjoint union of ${}^{\vee}L$ and the coset ${}^{\vee}L {}^{\vee}\delta_{L}$
we just need to define 
$\iota_{{L},{G}}$ on ${}^{\vee}L{}^{\vee}\delta_{L}$. 
We do this by sending 
each element
$l{}^{\vee}\delta_L\in {}^{\vee}L {}^{\vee}\delta_{L}$ to: 
\begin{align}\label{eq:embedding2}
\iota_{{L},{G}}(l{}^{\vee}\delta_L)=ln_1{}^{\vee}\delta.
\end{align} 
And because 
${}^{\vee}\delta_L^{2}=z=(n_1{}^{\vee}\delta)^{2}$, 
is straightforward to verify that 
the embedding is well-defined. 

Let us go back now to our Arthur parameter $\psi$  satisfying points
(AJ1), (AJ2) and (AJ3) above.
From the properties satisfied by $\psi$ and the definition of ${}^{\vee}{L}^{\Gamma}$
there exists (up to conjugation) a unique essentially unipotent Arthur parameter
\begin{align*}
\psi_{L}:W_{\mathbb{R}}\times \mathbf{SL}(2,\mathbb{C})\longrightarrow{}^{\vee}{L}^{\Gamma}, 
\end{align*}
with restriction to $\mathbf{SL}(2,\mathbb{C})$ equal to the principal morphism, 
such that up to conjugation by ${}^{\vee}{{G}}$ we have
\begin{align}\label{eq:arthurparameterL0}
\psi:W_{\mathbb{R}}\times \mathbf{SL}(2,\mathbb{C})\xrightarrow{\psi_{{L}}}{}^{\vee}{L}^{\Gamma}  \xrightarrow{\iota_{L,G}}{}^{\vee}{G}^{\Gamma}. 
\end{align}
Indeed, with notation as in (\ref{eq:AJparameterequation0}) and (\ref{eq:AJparameterequation01}), 
we just need to define $\psi_L$ by
\begin{align}\label{eq:arthurparameterL}
\psi_{L}(z)=\psi(z)=z^{\lambda}z^{\lambda'}\quad\text{ and }\quad \psi_L(j)=t{}^{\vee}\delta_L.  
\end{align}
Since $\psi_{{L}}$ is an essentially principal unipotent Arthur parameter, by
Theorem \ref{theo:essentiallyunipotentparameter}(d) there exists
for each strong real form $\delta$ of $L^{\Gamma}$ 
a projective character $\chi(\psi_L,\delta)$ of type $z$
of $L^{can}(\delta,\mathbb{R})$. 
We notice that since is $\psi_{L}|_{\mathbb{C}^{\times}}$ is bounded, the character 
$\chi(\psi_L,\delta)$ is unitary. 

Suppose now that $\delta\in L^{\Gamma}$ is a strong real form of $L^{\Gamma}$ 
so that $\delta^{2}\in Z({}G)$.
Then $\delta$ can be seen as a strong real form of $G^{\Gamma}$.
Write 
\begin{align}\label{eq:realformorbit}
\mathcal{O}_{\delta}=\{g\delta g^{-1}:g\in N_{G}(L)\}
\end{align}
and make $L$ act on $\mathcal{O}_{\delta}$ by conjugation. Denote by $\mathcal{S}_{\delta}$
the set of $L$-conjugacy classes of $\mathcal{O}_{\delta}$. Each $L$-orbit in $\mathcal{O}_{\delta}$
defines an $L$-conjugacy class of strong real forms in $L^{\Gamma}$, belonging to the same $G$-conjugacy class of strong real forms in $G^{\Gamma}$. 
For each $s\in \mathcal{S}_\delta$ fix a strong real form  
$\delta_s\in s$, 
and as above write $\chi(\psi_L,\delta_s)$ 
for the unitary character of type $z$ of ${L}^{can}(\mathbb{R},\delta_s)$
corresponding to $\delta_s$ and $\psi_L$. Now, let $\theta_{\delta_{s}}$ be the Cartan involution corresponding to $\sigma_{{\delta}_{s}}$, then the parabolic subgroup $Q$ of $G$ defined after Equation (\ref{eq:rootdatarelation}) is  $\theta_{\delta_s}$-stable. Write
\begin{itemize}
\item $K_{\delta_s}^{can}$ be the preimage in $G^{can}$ of $K_{\delta_s}=G^{\theta_{\delta_{s}}}$.
\item $\mathfrak{q}=\text{Lie}({Q})$, $\mathfrak{u}=\text{Lie}({U})$ and
$\mathfrak{k}_{\delta_s}=\text{Lie}(K_{\delta_s})$.
\item $S=\dim(\mathfrak{u}\cap\mathfrak{k}_{\delta_s})$. 
\end{itemize}
and consider the cohomologically induced representation 
\begin{align}\label{eq:canonicalinduction}
\left({}^{u}\mathscr{R}_{\mathfrak{q},K_{\delta_s}^{can}\cap L^{can}}^{\mathfrak{g},K_{\delta_s}^{can}}\right)^{S}(\chi(\psi_L,\delta_s)). 
\end{align}
We would like to begin by pointing out that, 
since $\chi(\psi_L,\delta_s)$
is a canonical projective representation of type 
$z$ (see Equation (\ref{eq:ajparameteridentity}) and Equation (\ref{eq:sinvariant7})),  
we are not tensoring 
$\chi(\psi_L,\delta_s)$
with 
$(\bigwedge^{\text{top}}\mathfrak{u})$
when applying the fuctor $\left({}^{u}\mathscr{R}_{\mathfrak{q},K_{\delta_s}^{can}\cap L^{can}}^{\mathfrak{g},K_{\delta_s}^{can}}\right)^{S}$
(cf. Equation (\ref{eq:cohomologicalinduction})). As a consequence the infinitisemal character of $\chi(\psi_L,\delta_s)$ is equal to the infinitesimal character of 
$\left({}^{u}\mathscr{R}_{\mathfrak{q},K_{\delta_s}^{can}\cap L^{can}}^{\mathfrak{g},K_{\delta_s}^{can}}\right)^{S}(\chi(\psi_L,\delta_s))$.
Second we notice that the $L$-parameter associated to $\left({}^{u}\mathscr{R}_{\mathfrak{q},K_{\delta_s}^{can}\cap L^{can}}^{\mathfrak{g},K_{\delta_s}^{can}}\right)^{S}(\chi(\psi_L,\delta_s))$
is the $L$-parameter of an $L$-group
obtained by composing the $L$-parameter corresponding to 
$\chi(\psi_L,\delta_s)$ with the embedding $\iota_{{L},{G}}:{}^{\vee}L^{\Gamma}\rightarrow {}^{\vee}{G}^{\Gamma}$. Therefore, using the Local Langlands Correspondence (see Theorem \ref{theo:4.1}) we conclude that
$\left({}^{u}\mathscr{R}_{\mathfrak{q},K_{\delta_s}^{can}\cap L^{can}}^{\mathfrak{g},K_{\delta_s}^{can}}\right)^{S}(\chi(\psi_L,\delta_s))$ is of type 1 (see Definition (10.3) \cite{ABV} and the paragraph after Equation (\ref{eq:cancover})), that is to say
trivial on $\pi_1(G)^{can}$, and consequently can be seen as a representation of $G(\delta_{s},\mathbb{R})$. 

We can now give the definition of the packets defined by Adams and Johnson in \cite{Adams-Johnson}.

\begin{deftn}\label{deftn:ajpackets}
Let $\psi$ be an Arthur parameter for ${}^{\vee}G^{\Gamma}$ satisfying points
$\mathrm{(AJ1)}$, $\mathrm{(AJ2)}$ and $\mathrm{(AJ3)}$, above.
The Adams-Johnson packet corresponding to $\psi$ is defined as the set
\begin{align*}
\Pi(G/\mathbb{R})_{\psi}^{\mathrm{AJ}}:=\left\{\left({}^{u}\mathscr{R}_{\mathfrak{q},K_{\delta_s}^{can}\cap L^{can}}^{\mathfrak{g},K_{\delta_s}^{can}}\right)^{S}(\chi(\psi_L,\delta_s)) :\delta\in L^{\Gamma}-L
\mathrm{~with~} \delta^{2}\in Z(G)\mathrm{~and~}s\in \mathcal{S}_{\delta}\right\}. 
\end{align*}
\end{deftn}
The next result enumerates some important properties satisfied
by Adams-Johnson packets. 
\begin{theo}\label{ajpproperties}
Let $\psi$ be an Arthur parameter for ${}^{\vee}G^{\Gamma}$ satisfying points
$\mathrm{(AJ1)}$, $\mathrm{(AJ2)}$ and $\mathrm{(AJ3)}$, above.
\begin{enumerate}[i.]
\item $\Pi(G/\mathbb{R})_{\psi}^{\mathrm{AJ}}$ contains the $L$-packet $\Pi(G/\mathbb{R})_{\varphi_{\psi}}$.
\item $\Pi(G/\mathbb{R})_{\psi}^{\mathrm{AJ}}$ is the support of a stable formal virtual character (For a precise description of the stable character see Theorem 2.13 \cite{Adams-Johnson}).
\item $\Pi(G/\mathbb{R})_{\psi}^{\mathrm{AJ}}$ satisfies the ordinary endoscopic identities predicted by the theory of endoscopy (For a more precise statement of this point see Theorem 2.21 \cite{Adams-Johnson}).
\end{enumerate}
\end{theo}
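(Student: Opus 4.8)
The plan is to obtain all three statements by transporting, through the comparison with ABV-packets established via Proposition \ref{prop:ccLG}, the corresponding properties of micro-packets listed in Theorem \ref{theo:abvproperties}, together with the explicit description of essentially principal unipotent ABV-packets in Theorem \ref{theo:essentiallyunipotentparameter}. More precisely, the key preliminary step — which I expect to occupy most of the argument and is really the heart of the matter — is to prove that $\Pi(G/\mathbb{R})_{\psi}^{\mathrm{AJ}}=\Pi(G/\mathbb{R})_{\psi}^{\mathrm{ABV}}$ for every Arthur parameter $\psi$ satisfying (AJ1), (AJ2), (AJ3). Granting that equality, statement (i) is immediate from Theorem \ref{theo:abvproperties}(i) (the $L$-packet $\Pi(G/\mathbb{R})_{\varphi_{\psi}}$ is always contained in the micro-packet), statement (ii) from Theorem \ref{theo:abvproperties}(ii) (the micro-packet supports the stable virtual character $\eta_{\psi}^{mic}$, which one then identifies, via the coefficient formula and the multiplicity-one statement of Theorem \ref{theo:essentiallyunipotentparameter}(d) pulled up through induction, with the Adams-Johnson stable character of Theorem 2.13 \cite{Adams-Johnson}), and statement (iii) from Theorem \ref{theo:abvproperties}(iii). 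So the proof reduces to the packet comparison.

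To prove $\Pi(G/\mathbb{R})_{\psi}^{\mathrm{AJ}}=\Pi(G/\mathbb{R})_{\psi}^{\mathrm{ABV}}$, first I would set up the dual-side geometry: the $L$-homomorphism $\iota_{L,G}:{}^{\vee}L^{\Gamma}\to{}^{\vee}G^{\Gamma}$ of (\ref{eq:embedding2}) induces, by Proposition \ref{prop:mapvarieties}, a closed immersion of geometric parameter varieties $X(\mathcal{O}_L,\iota_{L,G}):X(\mathcal{O}_L,{}^{\vee}L^{\Gamma})\hookrightarrow X(\mathcal{O},{}^{\vee}G^{\Gamma})$, where $\mathcal{O}$ is the infinitesimal-character orbit attached to $\varphi_{\psi}$ and $\mathcal{O}_L$ the one attached to $\psi_L$. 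Under the identifications (\ref{eq:varietiesisomor}) this closed immersion is precisely of the form $\iota:X_{L'}\hookrightarrow X_{G'}$ studied in Section 4, with the roles of $G$, $L$, $K$ played by the relevant dual objects ${}^{\vee}G(\Lambda)$, ${}^{d}L(\Lambda)$, $K_{y_i}$; thus the geometric induction functor $I_{L}^{G}$ and the formula of Proposition \ref{prop:ccLG} apply verbatim on the dual side. Next, Theorem \ref{theo:essentiallyunipotentparameter}(d) tells us exactly which perverse sheaves on $X(\mathcal{O}_L,{}^{\vee}L^{\Gamma})$ have nonzero microlocal multiplicity along the open orbit $S_{\psi_L}$ attached to the essentially principal unipotent parameter $\psi_L$: they are precisely the $P(\psi_L,\delta_s)$, each with multiplicity one. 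Applying $CC(I_{L}^{G}(-))=(I_{L}^{G})_{\ast}CC(-)$ and reading off the formula (\ref{eq:dernierintro}) of Proposition \ref{prop:ccLG}: the term $\sum m_{S_L}[\overline{T^{\ast}_{K\cdot\iota(S_L)}X_G}]$ shows that the conormal $T^{\ast}_{S_{\psi}}X(\mathcal{O},{}^{\vee}G^{\Gamma})$ (which is $K\cdot\iota$ of $T^{\ast}_{S_{\psi_L}}X(\mathcal{O}_L,{}^{\vee}L^{\Gamma})$, since $S_{\psi}$ is the image of the open orbit) appears with coefficient $\chi^{mic}_{S_{\psi_L}}$, so that $\chi^{mic}_{S_{\psi}}(P)\neq0$ exactly when $P$ is the image under geometric induction of one of the $P(\psi_L,\delta_s)$; the "boundary" term $\sum_{S\subset\partial(G\cdot\iota(X_L))}$ contributes nothing relevant because those orbits do not contain $S_{\psi}$ in their closure — here one invokes Lemma \ref{lem:lemorbit}(i) to see that only orbits $S$ with $S_{\psi}\subset\overline{S}$ matter, and such orbits meet $\iota(X_L)$ hence come from the first sum. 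Finally, under the commutative diagram relating $\mathscr{R}_{(\mathfrak{l},K_L)}^{(\mathfrak{g},K)}$ to $I_{L}^{G}$ (diagram (\ref{eq:diagramintroduction}) and its extension by $(I_{L}^{G})_{\ast}$), the representation attached to the perverse sheaf $I_{L}^{G}P(\psi_L,\delta_s)$ is exactly the cohomologically induced representation $\left(\mathscr{R}_{\mathfrak{l},K_{\delta_s}^{can}\cap L^{can}}^{\mathfrak{g},K_{\delta_s}^{can}}\right)^{i}(\chi(\psi_L,\delta_s))$ — one must check the cohomological degree $i$ matches the shift built into geometric induction, which is the content of the identity $I_{L}^{G}\circ\Phi_{X_L}=\Phi_{X_G}\circ\mathscr{R}_{(\mathfrak{l},K_L)}^{(\mathfrak{g},K)}$ together with a dimension count $i=(1/2)\dim(\mathfrak{k}_{\delta_s}/\mathfrak{l}\cap\mathfrak{k}_{\delta_s})$. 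This identifies the ABV-packet member by member with Definition \ref{deftn:ajpackets}.

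The main obstacle, as indicated, is the packet comparison, and within it the most delicate points are: (a) checking carefully that the Adams-Johnson construction of Definition \ref{deftn:ajpackets} — which ranges over strong real forms $\delta\in L^{\Gamma}-L$ with $\delta^2\in Z(G)$ and over $s\in\mathcal{O}_\delta$, i.e. over $L$-conjugacy classes inside a $G$-conjugacy class — matches exactly the indexing of the $P(\psi_L,\delta_s)$ produced on the dual side by Theorem \ref{theo:essentiallyunipotentparameter}(d) (this is a bookkeeping matter about $W(G,T)/W(L,T)$-cosets and the canonical cover, parallel to Lemma 2.5.2 \cite{Adams-Johnson}); (b) verifying that the "boundary" contribution in Proposition \ref{prop:ccLG} genuinely does not enlarge the set $\{\,P:\chi^{mic}_{S_{\psi}}(P)\neq0\,\}$, for which the key geometric input is that $S_{\psi}$ is open in $\iota(X(\mathcal{O}_L,{}^{\vee}L^{\Gamma}))$ (because $\psi_L|_{\mathbf{SL}(2,\mathbb{C})}$ is principal, as in the tempered case of Section 7.1 applied to $L$) combined with Lemma \ref{lem:contention} and Lemma \ref{lem:lemorbit}(i); and (c) the degree/normalization bookkeeping between cohomological induction in a fixed degree $i$ and the derived geometric induction functor. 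None of these is conceptually hard, but (a) and (c) require the kind of careful comparison of normalizations that the earlier sections have set up precisely in order to make routine.
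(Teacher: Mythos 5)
There is a genuine problem of logical order here. The paper does not prove Theorem \ref{ajpproperties} at all: it is recalled from Adams--Johnson --- the stability statement is their Theorem 2.13, the endoscopic identities their Theorem 2.21, and the containment of the $L$-packet is part of their construction --- which is exactly why the statement carries those citations. Your strategy of deducing it from the equality $\Pi(G/\mathbb{R})^{\mathrm{AJ}}_{\psi}=\Pi(G/\mathbb{R})^{\mathrm{ABV}}_{\psi}$ inverts the paper's logic and is circular relative to this paper's development: that equality is Theorem \ref{theo:ABV-AJ}, whose proof rests on Lemma \ref{lem:orbit}, and that lemma explicitly invokes Theorem \ref{ajpproperties}$(i)$ (the containment $\Pi(G/\mathbb{R})_{\varphi_{\psi}}\subset\Pi(G/\mathbb{R})^{\mathrm{AJ}}_{\psi}$) in order to know that the members of the $L$-packet of $\varphi_{\psi}$ are cohomologically induced from characters of $L$, before Johnson's resolutions and Vogan duality can be brought to bear. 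So you cannot obtain $(i)$ as a corollary of the packet comparison unless you first supply a proof of that comparison that is independent of Theorem \ref{ajpproperties}.

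Your sketch of such an independent proof has a gap at precisely the point where the paper uses \ref{ajpproperties}$(i)$. Proposition \ref{prop:ccLG} computes $CC(I_{L}^{G}\mathcal{F})$ only for objects that are already geometrically induced; combined with Theorem \ref{theo:essentiallyunipotentparameter}$(d)$ it gives the inclusion of the induced (Adams--Johnson) objects into the ABV-packet, but it says nothing about an arbitrary irreducible perverse sheaf $P(\xi)$ with $\chi^{mic}_{S_{\psi}}(P(\xi))\neq 0$. Lemma \ref{lem:lemorbit}$(i)$ only constrains the orbit $S_{\xi}$, and even granting (Lemma \ref{lem:orbit}) that such an orbit has the form ${}^{\vee}G\cdot\iota(S_{L})$, this does not by itself imply that $P(\xi)$ is of the form $I^{{}^{\vee}G}_{{}^{\vee}L}P(\xi_{L}')$; in the paper this is established by the representation-theoretic argument in Theorem \ref{theo:ABV-AJ} (cohomological induction of the corresponding $(\mathfrak{g},K)$-modules, Johnson's resolutions, Vogan duality), which again traces back to \ref{ajpproperties}$(i)$. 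Your phrase ``such orbits meet $\iota(X_{L})$ hence come from the first sum'' conflates orbits with perverse sheaves and does not close this gap; also, the boundary term in Proposition \ref{prop:ccLG} is discarded in the paper by the dimension count $d(S')<d(S_{\psi}')$ (i.e.\ $S_{\psi}$ is not itself a boundary orbit), not by the closure condition you state. The correct treatment of the statement at hand is simply citation of \cite{Adams-Johnson}; no proof is expected or given in this paper.
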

We turn now to the proof that Adams-Johnson packets are ABV-packets. We begin by 
noticing that from 	Theorem \ref{theo:essentiallyunipotentparameter}
we have 
$$\Pi(L/\mathbb{R})_{\psi_L}^{\text{ABV}}:=\left\{\chi(\psi_L,\delta):\delta \text{ a strong real form of }L^{\Gamma} \right\}.$$
Thus, we can give a reformulation of Definition \ref{deftn:ajpackets} 
in terms of the the ABV-packet attached to $\psi_{L}$ as follows 
\begin{align}\label{eq:reformulationAJpackets}
\Pi(G/\mathbb{R})_{\psi}^{\mathrm{AJ}}:=\left\{\left({}^{u}\mathscr{R}_{\mathfrak{q},K_{\delta_s}^{can}\cap L^{can}}^{\mathfrak{g},K_{\delta_s}^{can}}\right)^{S}(\pi) :\pi\in\Pi^{z}(L(\mathbb{R},\delta_s))_{\psi_L}^{\mathrm{ABV}},~ \delta\in L^{\Gamma}-L
\mathrm{~with~} \delta^{2}\in Z(G)\mathrm{~and~}s\in \mathcal{S}_{\delta}\right\} .
\end{align}
The next two results are going to be needed in what follows. The first is Proposition 1.11  \cite{ABV}.
\begin{prop}\label{prop:quasisplitrepresentation}
Suppose $\sigma$ is a quasisplit real form of $G$. Let $\varphi,~\varphi'\in \Phi({}^{\vee}{G}^{\Gamma})$ be two Langlands parameters, with $S$ and $S'$ as the corresponding ${}^{\vee}{{G}}$-orbits. Then the following conditions are equivalent:
\begin{enumerate}[i.]
\item $S$ is contained in the closure of $S'$. 
\item There are irreducible representations $\pi\in \Pi(G(\sigma,\mathbb{R}))_{\varphi}$ and $\pi'\in\Pi(G(\sigma,\mathbb{R}))_{\varphi'}$ with
the property that $\pi'$ is a composition factor of the standard module of which $\pi$ is the unique Langlands quotient.
\end{enumerate} 
\end{prop}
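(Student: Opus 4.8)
Since this is quoted from \cite{ABV} (Proposition 1.11), my plan is to reconstruct the argument rather than invent a new one. The key tool is the character-theoretic interpretation of the geometric order relation on orbits in $X({}^{\vee}G^{\Gamma})$, combined with the fact that for a quasisplit real form every block of representations is ``complete'' in the sense that every Langlands parameter contributes. First I would fix the quasisplit $\sigma$, and recall from the Langlands classification (Theorem \ref{theo:4.1}, in the geometric form of Section 6) that to each $\varphi$ we associate the orbit $S_\varphi$, that the complete parameters supported on $S_\varphi$ biject with $\Pi(G(\sigma,\mathbb{R}))_\varphi$ together with the representations of the other strong real forms, and that the standard module $M(\xi)$ and irreducible $\pi(\xi)$ are related by an upper-triangular (with respect to the closure order on orbits) change of basis in the Grothendieck group $KX({}^{\vee}G^{\Gamma})$: $M(\xi) = \sum_{\xi'} m_{\xi,\xi'}\,\pi(\xi')$ with $m_{\xi,\xi}=1$ and $m_{\xi,\xi'}\ne 0$ only if $S_{\xi}\subseteq \overline{S_{\xi'}}$.

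For the implication $(ii)\Rightarrow(i)$: if $\pi'$ is a composition factor of the standard module of which $\pi$ is the Langlands quotient, then in the notation above $\pi=\pi(\xi)$, $\pi'=\pi(\xi')$ with $m_{\xi,\xi'}\ne 0$, so by the triangularity statement $S_\xi\subseteq\overline{S_{\xi'}}$, i.e. $S\subseteq\overline{S'}$. This direction is essentially formal once the triangularity of the standard-to-irreducible transition matrix is in hand, and that in turn follows from the construction of standard modules via the same geometry (or, on the geometric side, from the fact that $P(\xi)$ is obtained by intermediate extension $j_{!\ast}$ from $S_\xi$, so its composition factors are supported on $\overline{S_\xi}$, dualized appropriately through Vogan duality).

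For the converse $(i)\Rightarrow(ii)$, which I expect to be the real obstacle: one must \emph{produce} a composition factor, and here the quasisplit hypothesis is essential. The idea is to use Vogan duality (Chapter 1 and Chapters 15--17 of \cite{ABV}), which interchanges $\Pi^z(G/\mathbb{R})$ with the analogous set for the dual group and converts the closure order on orbits in $X({}^{\vee}G^{\Gamma})$ into the closure order on orbits in the dual geometric parameter space; under this duality, ``$S\subseteq\overline{S'}$'' translates into a nonvanishing of a Kazhdan--Lusztig-type multiplicity, hence into the existence of the desired composition factor. The point where quasisplitness enters is that the relevant block contains the trivial/spherical representation and all standard modules are nonzero and irreducible-quotient-normalized there, so the multiplicity one extracts from duality is actually realized by honest representations of $G(\sigma,\mathbb{R})$ rather than of some other strong real form. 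Concretely I would: (1) invoke Vogan duality to reduce the statement about closure relations to a statement about entries of the KL matrix; (2) use the nonvanishing of the relevant entry (guaranteed by $S\subsetneq\overline{S'}$, via the support condition for intermediate extensions) to get a composition factor $\pi'$ of the standard module $M(\xi)$ with $\pi(\xi)=\pi$; (3) check, using that $\sigma$ is quasisplit, that $\xi$ and $\xi'$ can both be chosen to correspond to representations of $G(\sigma,\mathbb{R})$ and not merely of some strong real form in the same inner class. Step (3) is the delicate one and is exactly where I would lean most heavily on the cited machinery of \cite{ABV}; I would not attempt to reprove Vogan duality here but simply cite it, since the statement is quoted verbatim from that book.
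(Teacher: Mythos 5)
The paper does not prove this statement at all: it is quoted verbatim from Proposition 1.11 of \cite{ABV} and used as a black box (the only added content is the remark that $(ii)\Rightarrow(i)$ holds for non-quasisplit forms too). So the real question is whether your reconstruction would stand on its own. Your treatment of $(ii)\Rightarrow(i)$ is fine and matches the standard argument: the multiplicity matrix of irreducibles in standard modules is unipotent upper-triangular with respect to the closure order on the corresponding ${}^{\vee}G$-orbits, which is exactly the support property of the intermediate extensions transported through the ABV pairing, and no quasisplit hypothesis is needed there.

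The gap is in your step (2) for $(i)\Rightarrow(ii)$. You assert that the nonvanishing of the relevant Kazhdan--Lusztig-type entry is ``guaranteed by $S\subsetneq\overline{S'}$, via the support condition for intermediate extensions.'' The support condition says only that stalks of $P(\xi')$ vanish \emph{off} $\overline{S_{\xi'}}$; that is precisely the triangularity you already used for the easy direction, and it gives no lower bound. Since the local systems involved are in general nontrivial, the stalk cohomology of $j_{!\ast}\mathcal{V}'[-d]$ along a boundary orbit $S\subset\overline{S'}$ can fail to contain any prescribed local system on $S$ (a point of the support need not carry a nonzero stalk), and, worse, the ABV duality identifies multiplicities of irreducibles in standard modules with entries of the \emph{inverse transpose} of the geometric stalk-multiplicity matrix rather than with stalk multiplicities themselves, so even a nonzero stalk would not directly yield the composition factor you need. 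Producing, for the two given orbits, a pair of complete parameters with nonzero multiplicity \emph{and} arranging that both corresponding representations live on the same quasisplit form $G(\sigma,\mathbb{R})$ (nonemptiness of $L$-packets for quasisplit forms) is exactly the substance of the result in \cite{ABV}; as written, your steps (2) and (3) presuppose it rather than prove it. Given that the paper itself simply cites \cite{ABV}, the appropriate course is to cite Proposition 1.11 outright; your sketch as it stands does not close the hard direction.
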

We remark that $(ii)$ implies $(i)$ even for $G(\sigma,\mathbb{R})$ not quasi-split. 
\begin{lem}\label{lem:orbit}
Let $L^{\Gamma}$ be the extended group defined in (\ref{eq:extendedgroupL}), and write $\left({}^{\vee}{L}^{\Gamma},\mathcal{S}_L\right)$ for the $E$-group defined in (\ref{eq:e-groupL}).
In the setting of (\ref{eq:VofgeometricparameterUnion}), let us write
$$\iota_{L,G}:X\left({}^{\vee}{L}^{\Gamma}\right)\rightarrow X\left({}^{\vee}{G}^{\Gamma}\right)$$ 
for the closed immersion induced from the inclusion $\iota_{L,G}:{}^{\vee}{L}^{\Gamma}\hookrightarrow {}^{\vee}{G}^{\Gamma}$ of 
Equation (\ref{eq:embedding1}) (see Corollary 6.21 \cite{ABV}).
Let
 $\varphi$ be a Langlands parameter 
for ${}^{\vee}G^{\Gamma}$ with restriction to 
$\mathbb{C}^{\times}$ given by
$$\varphi(z)=z^{\lambda_{\varphi}}\overline{z}^{\lambda_{\varphi}'},
$$
where $\lambda_{\varphi}$, $\lambda_{\varphi}'\in X_{\ast}({}^{d}{T})\otimes\mathbb{C}$ with 
$\lambda_{\varphi}-\lambda_{\varphi}'\in X_{\ast}({}^{d}{T})$ (cf. Proposition 5.6 \cite{ABV}).
Suppose that $\varphi$ satisfies the following properties
\begin{enumerate}[i.]
\item $\varphi$ factors through ${}^{\vee}{L}^{\Gamma}$, that is, there exists a Langlands parameter 
$\varphi_{L}$
of ${}^{\vee}{L}^{\Gamma}$ such that 
$$\varphi=\iota_{L,G}\circ\varphi_{L}.$$
\item $\left<\lambda_{\varphi},\alpha\right>\neq 0$ for all roots 
$\alpha\in R\left({}^{\vee}{G},{}^{d}{T}\right)$. 
\end{enumerate}
Write $S_{\varphi}$ for the ${}^{\vee}{G}$-orbit 
in $X\left({}^{\vee}{G}^{\Gamma}\right)$ corresponding 
to $\varphi$ under 
(\ref{eq:cllc}) and
$S_{\varphi_{L}}$ for the ${}^{\vee}{L}$-orbit 
in $X\left({}^{\vee}{L}^{\Gamma}\right)$ corresponding under (\ref{eq:cllc}) to $\varphi_{L}$.
Suppose $S$ is a ${}^{\vee}{G}$-orbit 
in $X\left({}^{\vee}{G}^{\Gamma}\right)$ containing $S_{\varphi}$ in its closure.
Then there exists an orbit $S_{L}$ of 
${}^{\vee}{L}$
in $X\left({}^{\vee}{L}^{\Gamma}\right)$ with 
$S_{\varphi_{L}}\subset \overline{S}_{{L}}$  
such that
$$S={}^{\vee}{G}\cdot \iota_{L,G}(S_{L}).$$
\end{lem}
\begin{proof}
Let $\delta\in L^{\Gamma}$ be a strong real form of $L^{\Gamma}$ with $\delta^{2}\in Z(G)$, such that its associated real form $\sigma_{\delta}$ defines a quasi-split real form for $G$. As in the 
paragraph following (\ref{eq:realformorbit}), 
for each $s\in \mathcal{S}_{\delta}$ fix a strong real form $\delta_s\in s$, and write 
$\theta_{\delta_{s}}$ for the Cartan involution corresponding to $\sigma_{{\delta}_{s}}$.
Let $Q$ be the parabolic subgroup of $G$ defined after Equation (\ref{eq:rootdatarelation}).
Then $Q$ is a $\theta_{\delta}$-stable 
parabolic subgroup with Levi decomposition $Q=LU$.   
After possibly conjugating $\varphi$, we may assume
from Lemma \ref{lem:orbit} $(ii.)$ that
$$\left<\lambda_{\varphi},\alpha\right>>0\text{ for all roots }\alpha\in R\left({}^{d}{U},{}^{d}T\right).$$ 
Then by Proposition 4.13 \cite{VoganUnitarizability}
and Theorem 8.2 \cite{Knapp-Vogan}, the cohomological induction functor (notation as in (\ref{eq:canonicalinduction}))
$$\left({}^{u}\mathscr{R}_{\mathfrak{q},K_{\delta_s}^{can}\cap L^{can}}^{\mathfrak{g},K_{\delta_s}^{can}}\right)^{S}(\cdot),$$ 
when restricted to the category of canonical projective representation of type $z$ (see Equation \ref{eq:sinvariant7}) with infinitesimal character 
$\lambda_{\varphi}$, is exact and carries irreducible representations to irreducible representations.

Suppose $S$ is a ${}^{\vee}{G}$-orbit 
in $X\left({}^{\vee}G^{\Gamma}\right)$ containing $S_{\varphi}$ in its closure,
and write $\varphi'$ for the Langlands parameter corresponding to $S$ 
under Proposition 6.17 \cite{ABV} (see also Equation (\ref{eq:cllc})).
From Proposition \ref{prop:quasisplitrepresentation} 
there are irreducible representations $\pi\in \Pi_{\varphi}({G}(\mathbb{R},\delta))$ 
and $\pi'\in \Pi_{\varphi'}({G}(\mathbb{R},\delta))$, with the
property that $\pi'$ is a composition factor of the standard module $M(\pi)$ of which $\pi$ is the unique quotient. 


Now, recall the bijection between complete geometric parameters and final limit characters stated in Theorem 12.9 \cite{ABV} and Proposition 13.12 \cite{ABV}.
Since $\varphi$ factors through ${}^{\vee}{L}^{\Gamma}$ and $\left<\lambda_{\varphi},\alpha\right>>0\text{ for all roots }\alpha\in R\left({}^{d}{U},{}^{d}T\right)$, this bijection together with Lemma 8.1.2 \cite{VoganGreenBook} and Theorem 8.2.4 \cite{VoganGreenBook},
implies the existence of an element $s\in\mathcal{O}_{\delta}$,
a strong real form $\delta_s\in s$, 
and a standard module $M_L$ 
of ${L}^{can}(\mathbb{R},\delta_s)$, such that 
\begin{align}\label{eq:greenbooktheo}
M(\pi)=\left({}^{u}\mathscr{R}_{\mathfrak{q},K_{\delta_s}^{can}\cap L^{can}}^{\mathfrak{g},K_{\delta_s}^{can}}\right)^{S}(M_L).
\end{align}
Let $\pi_{L}$ be the unique irreducible quotient of $M_L$. Using the notation introduced earlier, we denote $M_L=M(\pi_{L})$. Since $\left({}^{u}\mathscr{R}_{\mathfrak{q},K_{\delta_s}^{can}\cap L^{can}}^{\mathfrak{g},K_{\delta_s}^{can}}\right)^{S}$
when restricted to representations with infinitesimal character
$\lambda_{\varphi}$ is exact and carries irreducible representations
to irreducible representations, we deduce from (\ref{eq:greenbooktheo}) that
$$\pi=\left({}^{u}\mathscr{R}_{\mathfrak{q},K_{\delta_s}^{can}\cap L^{can}}^{\mathfrak{g},K_{\delta_s}^{can}}\right)^{S}(\pi_L),$$ 
and $\varphi_{L}$ is the Langlands parameter corresponding to $\pi_{L}$.
%

Furthermore, the exacteness and the preservation by $\left({}^{u}\mathscr{R}_{\mathfrak{q},K_{\delta_s}^{can}\cap L^{can}}^{\mathfrak{g},K_{\delta_s}^{can}}\right)^{S}$ of the irreducibility property,  
implies that every composition factor of $M(\pi)$ 
is obtained after applying cohomological induction
to a composition factor of $M(\pi_{L})$.
Therefore, 
there exists $\pi_{L}'$ an irreducible representation of $L(\mathbb{R},\delta_s)$, 
such that
\begin{align}\label{eq:resolutionL2} 
\pi'=\left({}^{u}\mathscr{R}_{\mathfrak{q},K_{\delta_s}^{can}\cap L^{can}}^{\mathfrak{g},K_{\delta_s}^{can}}\right)^{S}(\pi_{L}').
\end{align}
Let $\varphi_{L}'$ be the Langlands parameter corresponding to $\pi_{L}'$
and write $S_{{L}}$ for the orbit of ${}^{\vee}{L}$ in $X\left({}^{\vee}{L}^{\Gamma}\right)$ associated to $\varphi_L'$
under Proposition 6.17 \cite{ABV}.
Since $\pi_L'$ is a composition factor of 
$M(\pi_{L})$
the remark after Proposition \ref{prop:quasisplitrepresentation}$(ii)$, implies that 
the orbit $S_{\varphi_{L}}$ is contained in the closure of ${S_{{L}}}$. 
Furthermore, from Equation (\ref{eq:resolutionL2}) and 
the relation 
between the data parameterizing $\pi_L'$ and $\pi$ in Lemma 8.1.2 \cite{VoganGreenBook}, 
we can verify that the Langlands parameter $\varphi'$ factors through 
$\varphi_{L}'$. 
Thus by Corollary 6.21 \cite{ABV}
the orbits $S_L$ and $S$ correspond under the map $\iota_{L,G}:X\left({}^{\vee}{L}^{\Gamma}\right)\rightarrow X\left({}^{\vee}{G}^{\Gamma}\right)$, that is to say
$$S={}^{\vee}{G}\cdot \iota_{L,G}(S_{L}).$$
\end{proof}
That Adams-Johnson packets are ABV-packets,
follows as a corollary of the next theorem.
\begin{theo}\label{theo:micropacket}
Let $\varphi$ be a Langlands parameter for ${}^{\vee}G^{\Gamma}$ satisfying Lemma \ref{lem:orbit} $(i.)$-$(ii.)$.
Then
\begin{align*}
\Pi(G/\mathbb{R})_{\varphi}^{{mic}}=\left\{\left({}^{u}\mathscr{R}_{\mathfrak{q},K_{\delta_s}^{can}\cap L^{can}}^{\mathfrak{g},K_{\delta_s}^{can}}\right)^{S}(\pi) :\pi\in\Pi^{z}(L(\mathbb{R},\delta_s))_{\varphi_L}^{{mic}},~ \delta\in L^{\Gamma}-L
\mathrm{~with~} \delta^{2}\in Z(G)\mathrm{~and~}s\in \mathcal{S}_{\delta}\right\} .
\end{align*}
\end{theo}
\begin{proof}
Let $\varphi_{{L}}$ be the unique (up to conjugation) Langlands parameter of ${}^{\vee}{L}^{\Gamma}$ satisfying $\varphi=\iota_{L,G}\circ\varphi_{L}$. From
 the definition of the embedding $\iota_{L,G}:{}^{\vee}{L}^{\Gamma}\hookrightarrow {}^{\vee}{G}^{\Gamma}$ (see Equation (\ref{eq:embedding1})), we deduce 
$$\varphi_{L}(z)=\varphi(z)=z^{\lambda_{\varphi}}z^{\lambda'_{\varphi}},\quad z\in\mathbb{C}^{\times}.$$
Write
$$\mathcal{O}={}^{\vee}{G}\cdot\lambda_{\varphi}\quad\text{ and }\quad \mathcal{O}_{L}={}^{\vee}{L}\cdot\lambda_{\varphi}.$$
Since $\varphi$
satisfies Lemma \ref{lem:orbit}$(ii.)$, 
$\mathcal{O}_{\varphi}$ and $\mathcal{O}_{\varphi_L}$   
are orbits of regular elements in ${}^{\vee}\mathfrak{g}$ and ${}^{\vee}\mathfrak{l}$ respectively.
With notation as in 
Equations (\ref{eq:Vofgeometricparameter})-
(\ref{eq:VofgeometricparameterUnion}) and 
Equation (\ref{eq:completegeometricparameter}),
let $\xi=\left(\left(y,\Lambda\right),\tau\right)$ be a complete geometric parameter for ${}^{\vee}G^{\Gamma}$ 
such that its corresponding orbit ${S_{\xi}}$ of ${}^{\vee}{G}$ in $X\left({}^{\vee}G^{\Gamma}\right)$,
contains $S_{\varphi}$ in its closure. Let $\pi(\xi)$ be the irreducible representation corresponding to $\xi$ under the Local Langlands Correspondence (see Theorem \ref{theo:4.1}) and write 
$\delta_{\xi}$ for the strong real form of $G^{\Gamma}$ of which $\pi(\xi)$ is a representation,
i.e. $\pi(\xi)\in \Pi(G(\mathbb{R},\delta_{\xi}))$.
Let 
$$K:= \text{ set of fixed points of the Cartan involution corresponding to }
{\delta_{\xi}}$$ 
(see Equation (5d)-(5g) \cite{AVParameters}).
By Lemma \ref{lem:orbit} there exists an orbit 
$S_{L}$ of ${}^{\vee}{L}$ in $X\left({}^{\vee}L^{\Gamma}\right)$ satisfying 
\begin{align}\label{eq:orbitsrelation}
S_{\varphi_{L}}\subset \overline{S}_{{L}}\quad \text{and}\quad  S_{\xi}={}^{\vee}{G}\cdot \iota_{L,G} (S_{L}).
\end{align}
Therefore, the Langlands parameter associated to $\xi$
factors through ${}^{\vee}L^{\Gamma}$ and 
$\pi(\xi)$ is cohomologically induced from
an irreducible representation of some real form of $L$. More precisely, there exists a complete geometric parameter $\xi_{L}=((y_{L},\Lambda_{L}),\tau_{L})$ for ${}^{\vee}L^{\Gamma}$ with $S_{\xi_L}=S_{L}$, such that if we write $\pi_{L}(\xi_{L})$ for 
the irreducible canonical projective representation corresponding to $\xi_{L}$ under the Langlands correspondence,
then
\begin{align}\label{eq:LtoG1}
\pi(\xi)=\left({}^{u}\mathscr{R}_{\mathfrak{q},K^{can}\cap L^{can}}^{\mathfrak{g},K^{can}}\right)^{S}(\pi(\xi_{L})).
\end{align}
%
Now, $\Lambda \subset\mathcal{O}$, 
so $\Lambda$ is a canonical flat of regular
elements in ${}^{\vee}\mathfrak{g}$.
Similarly, since $\Lambda_{L} \subset \mathcal{O}_{\varphi_L}$, 
we deduce that $\Lambda_{L}$ 
is a canonical flat of regular 
elements in ${}^{\vee}\mathfrak{l}$.
Consequently, with notation as in 
(\ref{eq:groupslambda}),
we obtain that 
$P(\Lambda)$ (respectively $P(\Lambda_{L})$) is a Borel 
subgroup of ${}^{\vee}{G}(\Lambda)$ (respectively of ${}^{\vee}{L}(\Lambda_{{L}})$). 
Let $X_{y}\left(\mathcal{O},{}^{\vee}G^{\Gamma}\right)$ and $X_{y_{L}}\left(\mathcal{O},{}^{\vee}L^{\Gamma}\right)$ be the smooth subvarieties of $X\left({}^{\vee}G^{\Gamma}\right)$
and $X\left({}^{\vee}L^{\Gamma}\right)$ defined in (\ref{eq:varieties}).
It is clear from the definition of these varieties that   
$$S_{\xi}\subset X_{y}\left(\mathcal{O},{}^{\vee}G^{\Gamma}\right)\quad\text{and}\quad S_{\xi_L}\subset X_{y_{L}}\left(\mathcal{O}_{L},{}^{\vee}L^{\Gamma}\right).$$
Let 
\begin{align}\label{eq:settingcompacte}
{}^{\vee}K\quad=\quad\text{set of fixed points of 
the involutive automorphism defined by }y. 
\end{align}
Define  
${}^{\vee}K_{L}$ similarly.
We notice that, since  $\varphi$ factors through $\varphi_{L}$, we have 
$y=\iota_{L,G}(y_{L})$ (see Equation (\ref{eq:embedding1}) and (\ref{eq:embedding2})) and thus
${}^{\vee}K_{L}={}^{\vee}K\cap {}^{\vee}L$. 
As explained in (\ref{eq:varietiesisomor}) 
the varieties $X_{y}\left(\mathcal{O},{}^{\vee}G^{\Gamma}\right)$ and 
$X_{y_{L}}\left(\mathcal{O}_{L},{}^{\vee}L^{\Gamma}\right)$ satisfy
$$X_{y}\left(\mathcal{O},{}^{\vee}G^{\Gamma}\right)\cong {}^{\vee}{G}\times_{{}^{\vee}K}{}^{\vee}{G}(\Lambda)/P(\Lambda)\quad\text{and}\quad
X_{y_{L}}(\mathcal{O}_{L},{}^{\vee}L^{\Gamma})\cong{}^{\vee}{L}\times_{{}^{\vee}K_{L}}{}^{\vee}{L}(\Lambda_{L})/P(\Lambda_{L}).$$
Thus, denoting the flag varieties of 
${}^{\vee}{G}(\Lambda)$ and ${}^{\vee}{L}(\Lambda_{{L}})$ by $X_{{}^{\vee}{G}}(\Lambda)$ and $X_{{}^{\vee}{L}}(\Lambda_{L})$ respectively, we can write
\begin{align}\label{eq:settingvariety}
X_{y}\left(\mathcal{O},{}^{\vee}G^{\Gamma}\right)\cong{}^{\vee}{G}\times_{{}^{\vee}K}X_{{}^{\vee}{G}}
(\Lambda)
\quad\text{and}\quad
X_{y_{L}}\left(\mathcal{O}_{L},{}^{\vee}L^{\Gamma}\right)\cong{}^{\vee}{L}\times_{{}^{\vee}K_{L}}X_{{}^{\vee}{L}}(\Lambda_{L}).
\end{align}
Define $\Xi\left(X_{{}^{\vee}{G}}(\Lambda)\right)$ to be the set of couples $(S,\mathcal{V})$ with $S$ an orbit of 
${}^{\vee}K$ on $ X_{{}^{\vee}{G}}(\Lambda)$ and $\mathcal{V}$ an irreducible ${}^{\vee}K$-equivariant local system on $S$. Define $\Xi\left(X_{{}^{\vee}{L}}(\Lambda_{L})\right)$ similarly.  
Let 
$\Xi_{y}\left(\mathcal{O},{}^{\vee}G^{\Gamma}\right)$
be the subset of $\Xi\left(\mathcal{O},{}^{\vee}G^{\Gamma}\right)$
consisting of complete geometric parameters with first 
coordinate $y$. We make the same definition for the subset $\Xi_{y_{L}}\left(\mathcal{O}_{L},{}^{\vee}L^{\Gamma}\right)$ of $\Xi\left(\mathcal{O}_{L},{}^{\vee}L^{\Gamma}\right)$.
From Proposition 7.14 \cite{ABV} the map in
Proposition \ref{prop:reduction2}(2) is compatible with the parameterization of irreducibles 
by $\Xi_{y_{L}}\left(\mathcal{O}_{L},{}^{\vee}L^{\Gamma}\right)$ and $\Xi\left(X_{{}^{\vee}L}(\Lambda_{L})\right)$, 
respectively by $\Xi_{y}\left(\mathcal{O},{}^{\vee}G^{\Gamma})\right)$ and $\Xi\left(X_{{}^{\vee}G}(\Lambda)\right)$.  
In other words we have bijections 
\begin{align}\label{eq:bijectioncomparision}
\Xi_{y_{{L}}}\left(\mathcal{O}_{L},{}^{\vee}L^{\Gamma}\right)\longrightarrow \Xi(X_{{}^{\vee}L}(\Lambda_{L})) \quad\text{ and }\quad 
\Xi_{y}\left(\mathcal{O},{}^{\vee}G^{\Gamma}\right)\longrightarrow \Xi(X_{{}^{\vee}G}(\Lambda)).
\end{align}
Let $\xi'$ be the complete geometric parameter of $X_{{}^{\vee}{G}}(\Lambda)$
corresponding to $\xi$ 
 under (\ref{eq:bijectioncomparision})
and write 
$P(\xi')$ 
for the irreducible perverse sheaves on $X_{{}^{\vee}{G}}(\Lambda)$ 
defined from $\xi'$ 
as in (\ref{eq:irredPerv}). For all $\xi_{L}\in \Xi_{y_{L}}\left(\mathcal{O}_{L},{}^{\vee}L^{\Gamma}\right)$, we define $\xi_L'$ and $P(\xi_{L}')$ similarly.  
Following this notation, for each ${}^{\vee}{G}$-orbit $S$ in $X_{y}\left(\mathcal{O},{}^{\vee}G^{\Gamma}\right)$ let 
$S'$ be the ${}^{\vee}K$-orbit in $X_{{}^{\vee}{G}}(\Lambda)$ corresponding to $S$ under (\ref{eq:bijectioncomparision}). We make the same definition for orbits $S_L$ in $X_{y_L}\left(\mathcal{O}_L,{}^{\vee}L^{\Gamma}\right)$.

Let $S_{\varphi}$ be the ${}^{\vee}{G}$-orbit 
in $X\left({}^{\vee}{G}^{\Gamma}\right)$ corresponding to $\varphi$ under 
(\ref{eq:cllc}). We recall that the representation $\pi(\xi)$ belongs to the microlocal packet corresponding to $\varphi$ if and only if its microlocal multiplicity $\chi_{S_{\varphi}}^{\mathrm{mic}}(P(\xi))$ is non zero (Definition \ref{deftn:micropacket}). Now, from Proposition \ref{prop:reduction2}$(1)$ and \ref{prop:reduction2}(6)  we have 
$$S_{\varphi}\subset \overline{S}\quad \text{iff}\quad S_{\varphi}'\subset \overline{S'}\quad \text{ and } \quad
\chi_{S}^{\mathrm{mic}}(P(\xi))=\chi_{S'}^{\mathrm{mic}}(P(\xi')).$$
We have similar relations between ${}^{\vee}L$-orbits in 
$X\left(\mathcal{O}_{{L}},{}^{\vee}L^{\Gamma}\right)$
and ${}^{\vee}K_{{L}}$-orbits in $X_{{}^{\vee}{L}}(\Lambda_{L})$.

Hence to prove the theorem, we are going 
to describe in some detail 
the characteristic cycle of $CC(P(\xi'))$ so that we can give conditions for 
the microlocal multiplicity along $S'_{\varphi}$, 
to be non zero. 
To do this, the next lemma will be necessary. 
In order to simplify the statement of the lemma, we adopt the following notation. Let $H$ be equal to 
${}^{\vee}G$ or ${}^{\vee}L$. In the setting of (\ref{eq:settingcompacte})-(\ref{eq:bijectioncomparision}), for all parameter 
$\xi'\in \Xi\left(X_{{}^{\vee}{H}}(\Lambda)\right)$ we write
\begin{align}\label{eq:RHBBL1}
{}^{\vee}\pi(\xi')
\end{align}
for the irreducible 
$(\mathfrak{h},{}^{\vee}K)$-module corresponding to $P(\xi')$ under the Riemann-Hilbert correspondence and
Beilinson-Bernstein localization, and
\begin{align}\label{eq:RHBBL2}
{}^{\vee}M(\xi')
\end{align}
for the standard module corresponding to the constructible sheaf $(-1)^{d(\xi)}\mu(\xi')$ under the same functor. We recall that $d(\xi)$ is the dimension of the orbit $S_{\xi}$. See the proof of 
Proposition 16.13 \cite{ABV}, for an explanation of the appearence of $(-1)^{d(\xi)}$. 
\begin{lem}\label{lem:lastneededlemma}
 In the setting of (\ref{eq:settingcompacte})-(\ref{eq:bijectioncomparision}).
Let $\xi_{L}\in \Xi_{y_L}\left(\mathcal{O}_{\varphi_L},{}^{\vee}L^{\Gamma}\right)$ 
and write $\xi\in \Xi_{y}\left(\mathcal{O}_{\varphi},{}^{\vee}G^{\Gamma}\right)$
for the unique complete geometric parameter satisfying 
$$
\pi(\xi')=\left({}^{u}\mathscr{R}_{\mathfrak{q},K_{\xi}^{can}\cap L^{can}}^{\mathfrak{g},K_{\xi}^{can}}\right)^{S}(\pi(\xi_{L}')).
$$ 
Then in 
the Grothendieck group of $({}^{\vee}{\mathfrak{g}},{}^{\vee} K)$-modules, 
we have the following identity 
\begin{align}\label{eq:lastneededlemma}
\left({}^{u}\mathscr{R}_{{}^{\vee}\mathfrak{q},{}^{\vee}K\cap {}^{\vee}L}^{{}^{\vee}\mathfrak{g},{}^{\vee}K}\right)^{0}\left(
{}^{\vee}\pi\left(\xi_L'\right)\right)={}^{\vee}\pi(\xi')+\sum_{\substack{\gamma'\in\Xi\left(X_{{}^{\vee}{G}}(\Lambda)\right)\\S_{\gamma'}\nsubseteq {}^{\vee}K\cdot \iota(X_{{}^{\vee}{L}}(\Lambda_{L}))
}}m(\gamma'){}^{\vee}\pi(\gamma'),\quad (\text{for some }m(\gamma')\in\mathbb{N}),
\end{align}
where $\iota$ denotes the inclusion of flag varieties 
$\iota: X_{{}^{\vee}{L}}(\Lambda_{L})\rightarrow X_{{}^{\vee}{G}}(\Lambda)$.
\end{lem}
To not interrupt the development, we postpone the proof of the lemma to the end of the section.\\

Let $I_{{}^{\vee}{L}}^{{}^{\vee}{G}}$ be the geometric induction functor of Definition \ref{deftn:geominduction}. 
Using the Riemman-Hilbert and Beilinson-Bernstein 
correspondences, we deduce from the commutativity of
Diagram (\ref{eq:cdiagramme1}) and Equality (\ref{eq:lastneededlemma}) that 
$$I_{{}^{\vee}{L}}^{{}^{\vee}{G}}P(\xi'_{L})=P(\xi')+\sum_{\substack{\gamma'\in\Xi\left(X_{{}^{\vee}{G}}(\Lambda)\right)\\S_{\gamma'}\nsubseteq {}^{\vee}K\cdot \iota(X_{{}^{\vee}{L}}(\Lambda_{L}))
}}m(\gamma')P(\gamma'),$$
and since 
by Theorem 2.2.3 \cite{Hotta},
the characteristic cycle defines a $\mathbb{Z}$-linear map, we obtain
$$CC\left(I_{{}^{\vee}{L}}^{{}^{\vee}{G}}P(\xi'_{L})\right)=CC\left(P(\xi')\right)+\sum_{\substack{\gamma'\in\Xi\left(X_{{}^{\vee}{G}}(\Lambda)\right)\\
S_{\gamma'}\nsubseteq {}^{\vee}K\cdot \iota(X_{{}^{\vee}{L}}(\Lambda_{L}))
}}m(\gamma')CC\left(P(\gamma')\right).$$
Thus by Theorem \ref{prop:ccLG} we can write
\begin{align}\label{eq:cycleequation}
&CC(P(\xi'))=\sum_{\substack{{}^{\vee}K_{L}-\mathrm{orbits }~S_{{}^{\vee}{L}}~\mathrm{in}\\
X_{{}^{\vee}{L}}(\Lambda_L)}} \chi_{S_{{}^{\vee}{L}}}^{mic}(P(\xi'_{L}))[\overline{T_{{}^{\vee}K\cdot\iota(S_{{}^{\vee}{L}})}^{\ast}X_{{}^{\vee}{G}}(\Lambda)}]\qquad+\\
&
\left(\sum_{\substack{{}^{\vee}K-\text{orbits}~S'\text{~in}\\
\left({}^{\vee}K\cdot \iota(X_{{}^{\vee}{L}}(\Lambda_{L}))\right)^c
~\cap~ \partial\left({}^{\vee}K\cdot \iota(X_{{}^{\vee}{L}}(\Lambda_{L}))\right)}} \chi_{S'}^{mic}\left(I_{{}^{\vee}{L}}^{{}^{\vee}{G}}P(\xi'_{L})\right)[\overline{T_{S'}^{\ast}X_{{}^{\vee}{G}}(\Lambda)}]-\sum_{\substack{\gamma'\in\Xi\left(X_{{}^{\vee}{G}}(\Lambda)\right)\\
S_{\gamma'}\nsubseteq {}^{\vee}K\cdot \iota(X_{{}^{\vee}{L}}(\Lambda_{L}))
}}m(\gamma')CC\left(P(\gamma')\right)\right).\nonumber
\end{align}
Using Equality \ref{eq:cycleequation} we are going to study 
the value of the microlocal multiplicity $\chi_{S'_{\varphi}}^{\mathrm{mic}}(P(\xi'))$.
We begin by noticing that 
for all 
$
\gamma'\in\Xi\left(X_{{}^{\vee}{G}}(\Lambda)\right)$ with $S_{\gamma'}\nsubseteq {}^{\vee}K\cdot \iota(X_{{}^{\vee}{L}}(\Lambda_{L}))$, 
we have 
$$
S_{\gamma'}\neq {}^{\vee}K\cdot\iota(S_{{}^{\vee}{L}})
\quad\text{for all }{}^{\vee}K_{L}-\mathrm{orbits }~S_{{}^{\vee}{L}}.
$$
Hence by Lemma \ref{lem:orbit} we deduce
$$
{}^{\vee}K\cdot\iota(S_{{}^{\vee}{L}})\nsubseteq\overline{S_{\gamma'}},
\quad\text{for all }{}^{\vee}K_{L}-\mathrm{orbits }~S_{{}^{\vee}{L}},
$$
and from Lemma
\ref{lem:lemorbit}($i$), we obtain
\begin{align*}
\chi_{{}^{\vee}K\cdot\iota(S_{{}^{\vee}{L}})}^{mic}(P(\gamma'))
=0,\quad\text{for all }{}^{\vee}K_{L}-\mathrm{orbits }~S_{{}^{\vee}{L}}.
\end{align*}
In particular, since $S'_{\varphi}=K\cdot\iota(S'_{\varphi_{L}})$,
for all $\gamma'\in\Xi\left(X_{{}^{\vee}{G}}(\Lambda)\right)$ with $S_{\gamma'}\nsubseteq {}^{\vee}K\cdot \iota(X_{{}^{\vee}{L}}(\Lambda_{L}))$, we have 
\begin{align}\label{eq:thelastaoflastoflast}
\chi_{S'_{\varphi}}^{mic}(P(\gamma'))
=0.\end{align}
Moreover, $S'_{\varphi}$
 is not contained in $\left({}^{\vee}K\cdot \iota(X_{{}^{\vee}{L}}(\Lambda_{L}))\right)^c
~\cap~ \partial\left({}^{\vee}K\cdot \iota(X_{{}^{\vee}{L}}(\Lambda_{L}))\right)$,
so the conormal bundle
$\overline{T_{S'_{\varphi}}^{\ast}X_{{}^{\vee}G}(\Lambda)}$
 does not appears in the second sum of (\ref{eq:cycleequation}). 
 Thus from Equation (\ref{eq:thelastaoflastoflast}) we conclude
that 
$\overline{T^{\ast}_{S'_{\varphi}}X_{{}^{\vee}G}(\Lambda)}$
can only appears in the first sum of (\ref{eq:cycleequation}). This implies 
\begin{align*}
\chi_{S_{\varphi}'}^{\mathrm{mic}}(P(\xi'))\neq 0\quad \text{if and only if}\quad
\chi_{S_{\varphi_{L}}'}^{\mathrm{mic}}(P(\xi_{L}'))\neq 0.
\end{align*} 
and from Proposition \ref{prop:reduction2}(6) we can write 
\begin{align}\label{eq:cycleLcycleG}
\chi_{S_{\varphi}}^{\mathrm{mic}}(P(\xi))\neq 0\quad \text{if and only if}\quad
\chi_{S_{\varphi_{L}}}^{\mathrm{mic}}(P(\xi_{L}))\neq 0.
\end{align} 
Hence from Lemma
\ref{lem:lemorbit}($i$)
and
the definition of micro-packets, we obtain that
equation (\ref{eq:cycleLcycleG}) translates as: 
\begin{align}\label{eq:last}
\text{Let }\xi\in \Xi\left({}^{\vee}G^{\Gamma}\right)\text{, then }\pi(\xi)\in\Pi(G/\mathbb{R})_{\varphi}^{mic}
\text{ if and only if there exists }\xi_L \in \Xi\left({}^{\vee}L^{\Gamma}\right)\\
\text{ such that } \pi(\xi_{L})\in\Pi^{z}(L/\mathbb{R})_{\varphi_{L}}^{mic}
\text{ and }\pi(\xi)=\left(\mathscr{R}_{\mathfrak{q},K^{can}\cap L^{can}}^{\mathfrak{g},K^{can}}\right)&^{S}(\pi(\xi_{L})).\nonumber 
\end{align}
Therefore
\begin{align*}
\Pi(G/\mathbb{R})_{\varphi}^{mic}:=\left\{\left({}^{u}\mathscr{R}_{\mathfrak{q},K_{\delta_s}^{can}\cap L^{can}}^{\mathfrak{g},K_{\delta_s}^{can}}\right)^{S}(\pi) :\pi\in\Pi^{z}(L(\mathbb{R},\delta_s))_{\varphi_L}^{mic},~ \delta\in L^{\Gamma}-L
\mathrm{~with~} \delta^{2}\in Z(G)\mathrm{~and~}s\in \mathcal{S}_{\delta}\right\} .
\end{align*}
\end{proof}

\begin{cor}\label{theo:ABV-AJ}
Let $\psi$ be an Arthur parameter for ${}^{\vee}G^{\Gamma}$ satisfying points
$\mathrm{(AJ1)}$, $\mathrm{(AJ2)}$ and $\mathrm{(AJ3)}$, above.  Then
$$\Pi(G/\mathbb{R})_{\psi}^{\mathrm{ABV}}=\Pi(G/\mathbb{R})_{\psi}^{\mathrm{AJ}}.$$  
\end{cor}
\begin{proof}
Let $\varphi_{\psi}$ be the Langlands parameter attached to $\psi$. By $\mathrm{(AJ1)}$ and 
Equation (\ref{eq:arthurparameterL0}), $\varphi_{\psi}$ satisfies points $(i)$-$(ii.)$ Theorem (\ref{theo:micropacket}). Hence
\begin{align*}
\Pi(G/\mathbb{R})_{\psi}^{\mathrm{ABV}}&=
\Pi(G/\mathbb{R})_{\varphi_{\psi}}^{mic}\\
&=\left\{\left({}^{u}\mathscr{R}_{\mathfrak{q},K_{\delta_s}^{can}\cap L^{can}}^{\mathfrak{g},K_{\delta_s}^{can}}\right)^{S}(\pi) :\pi\in\Pi^{z}(L(\mathbb{R},\delta_s))_{\varphi_{\psi_{L}}}^{mic},~ \delta\in L^{\Gamma}-L
\mathrm{~with~} \delta^{2}\in Z(G)\mathrm{~and~}s\in \mathcal{S}_{\delta}\right\}\\
&=\left\{\left({}^{u}\mathscr{R}_{\mathfrak{q},K_{\delta_s}^{can}\cap L^{can}}^{\mathfrak{g},K_{\delta_s}^{can}}\right)^{S}(\pi) :\pi\in\Pi^{z}(L(\mathbb{R},\delta_s))_{\psi_L}^{\mathrm{ABV}},~ \delta\in L^{\Gamma}-L
\mathrm{~with~} \delta^{2}\in Z(G)\mathrm{~and~}s\in \mathcal{S}_{\delta}\right\},
\end{align*} 
and by Equation (\ref{eq:reformulationAJpackets}) we deduce
$$\Pi(G/\mathbb{R})_{\psi}^{\mathrm{ABV}}=\Pi(G/\mathbb{R})_{\psi}^{\mathrm{AJ}}.$$
\end{proof} 
We end this section with the proof of Lemma \ref{lem:lastneededlemma}.
\begin{proof}
 In the setting of (\ref{eq:settingcompacte})-(\ref{eq:bijectioncomparision}). Since the functor
$$\left({}^{u}\mathscr{R}_{\mathfrak{q},K_{\delta_s}^{can}\cap L^{can}}^{\mathfrak{g},K_{\delta_s}^{can}}\right)^{S}(\cdot)$$ 
when restricted to the category of canonical projective representation of 
infinitesimal character 
$\lambda_{\varphi}$, carries irreducible representations to irreducible representations,
there exists for all parameters $\mu_L\in\Xi_{y_L}\left(\mathcal{O}_L,{}^{\vee}L^{\Gamma}\right)$
a complete parameter $\mu_{L,G}\in \Xi_{y}\left(\mathcal{O},{}^{\vee}G^{\Gamma}\right)$
satisfying
\begin{align}\label{eq:1finallemma}
\pi(\mu_{L,G})=
\left({}^{u}\mathscr{R}_{\mathfrak{q},K\cap L}^{\mathfrak{g},K_{\delta_s}^{can}}\right)^{S}(
\pi(\mu_L)
).
\end{align}
To prove the lemma, we need to verify
the following list of assertions:
\begin{enumerate}
\item For all parameters $\mu_{L}\in\Xi_{y_L}\left(\mathcal{O}_L,{}^{\vee}L^{\Gamma}\right)$
we have
$$M(\mu_{L,G})=\left({}^{u}\mathscr{R}_{\mathfrak{q},K\cap L}^{\mathfrak{g},K_{\delta_s}^{can}}\right)^{S}(
M(\mu_L)).$$
\item For all parameters $\mu\in \Xi_{y}\left(\mathcal{O},{}^{\vee}G^{\Gamma}\right)$ with $
S_{\mu}={}^{\vee}K\cdot\iota(S_{{}^{\vee}{L}})$
for some ${}^{\vee}K_{\xi_L}$-$\mathrm{orbit }~S_{{}^{\vee}{L}},
$
there exists a parameter $\mu_{L}\in\Xi_{y_L}\left(\mathcal{O}_L,{}^{\vee}L^{\Gamma}\right)$
such that 
$$\pi(\mu)=\left({}^{u}\mathscr{R}_{\mathfrak{q},K\cap L}^{\mathfrak{g},K_{\delta_s}^{can}}\right)^{S}(
\pi(\mu_L)).$$ 
\end{enumerate}
To state the last three assertions, we recall that for all
$\xi\in \Xi_{y}\left(\mathcal{O},{}^{\vee}G^{\Gamma}\right)$
we write $\xi'$ for the complete geometric parameter in 
$\Xi\left(X_{{}^{\vee}{G}}(\Lambda)\right)$
corresponding to $\xi$ under (\ref{eq:bijectioncomparision}). We use the same notation for $\xi_{L}\in\Xi_{y}\left(\mathcal{O}_L,{}^{\vee}L^{\Gamma}\right)$. 
Then with notation as in equation (\ref{eq:RHBBL1}) and equation (\ref{eq:RHBBL2}) we have: 
\begin{enumerate}
 \setcounter{enumi}{2}
\item For all parameters $\mu_{L}'\in \Xi\left(X_{{}^{\vee}{L}}(\Lambda_{L})\right)$
we can write
$${}^{\vee}M(\mu_{L,G}')=\left({}^{u}\mathscr{R}_{{}^{\vee}\mathfrak{q},{}^{\vee}K\cap {}^{\vee}L}^{{}^{\vee}\mathfrak{g},{}^{\vee}K}\right)^{0}\left(
{}^{\vee}M(\mu_L')\right).$$ 
\item For all parameters $\mu_{L}'\in \Xi\left(X_{{}^{\vee}{L}}(\Lambda_{L})\right)$,
${}^{\vee}\pi(\mu_{L,G}')$ appears as a subquotient in 
$\left({}^{u}\mathscr{R}_{{}^{\vee}\mathfrak{q},{}^{\vee}K\cap {}^{\vee}L}^{{}^{\vee}\mathfrak{g},{}^{\vee}K}\right)^{0}(
{}^{\vee}\pi(\mu_L')
)$ with multiplicity one.
\item For all couples of parameters
$\gamma_L',~\mu_L'\in \Xi\left(X_{{}^{\vee}L}(\Lambda)\right)$, 
the multiplicity 
${}^{\vee}m(\mu_L',\gamma_L')$
of ${}^{\vee}\pi(\mu_L')$
in ${}^{\vee}M(\gamma_L')$,
is equal to the multiplicity 
${}^{\vee}m(\mu_{L,G}',\gamma_{L,G}')$ of ${}^{\vee}\pi(\mu_{L,G}')$
in ${}^{\vee}M(\gamma_{L,G}')$.
\end{enumerate}

To prove points (1)-(3) recall the
 bijection between complete geometric parameters and final limit characters
expressed in Theorem 12.9 \cite{ABV} and Proposition 13.12 \cite{ABV}. 
Since we 
are in the setting of (\ref{eq:settingcompacte})-(\ref{eq:bijectioncomparision})
(i.e regular and dominant infinitesimal character), through this bijection,
points (1.) and (3.) are a consequence   
of Corollary 8.1.3 \cite{VoganGreenBook} and 
Theorem 8.2.4 \cite{VoganGreenBook}. 
For Point (2.) we notice that, by hypothesis the Langlands parameter corresponding 
to $S_{\mu}$ 
factors through ${}^{\vee}L^{\Gamma}$. Then the argument used previous to equation (\ref{eq:greenbooktheo})
applies to the present context, and point (2.) follows.

To show Point (4.) we recall that 
${}^{\vee}\pi(\mu_{L}')$
is the unique irreducible quotient  
of ${}^{\vee}M(\mu_{L}')$. The same can be said for
${}^{\vee}\pi(\mu_{L,G}')$
and ${}^{\vee}M(\mu_{L,G}')$. Point (3.) and the exactness of the parabolic induction functor implies then that 
$\left({}^{u}\mathscr{R}_{{}^{\vee}\mathfrak{q},{}^{\vee}K\cap {}^{\vee}L}^{{}^{\vee}\mathfrak{g},{}^{\vee}K}\right)^{0}(
{}^{\vee}\pi(\mu_L')
)$ is a quotient of ${}^{\vee}M(\mu_{L,G}')$. Therefore, ${}^{\vee}\pi(\mu_{L,G}')$ is a quotient of $\left({}^{u}\mathscr{R}_{{}^{\vee}\mathfrak{q},{}^{\vee}K\cap {}^{\vee}L}^{{}^{\vee}\mathfrak{g},{}^{\vee}K}\right)^{0}(
{}^{\vee}\pi(\mu_L')
)$. 
Since ${}^{\vee}\pi(\mu_{L,G}')$ appears with multiplicity one in ${}^{\vee}M(\mu_{L,G}')$, Point (4.) follows.


We are going to reduce point (5.) to an identity
of \textbf{character matrices} in the Gronthendieck group of $(\mathfrak{g},K)$-modules.
By Theorem 13.13(c) \cite{ICIV} (see also Corollary 15.13 \cite{ABV} ), we have
$$
{}^{\vee}m(\mu_L',\gamma_L')=(-1)^{l(\mu_{L})-l(\gamma_{L})}c(\gamma_L,\mu_{L})\quad\text{and}\quad {}^{\vee}m(\mu_{L,G}',\gamma_{L,G}')=(-1)^{l(\mu_{L,G})-l(\gamma_{L,G})}c(\gamma_{L,G},\mu_{L,G}),
$$ 
where the length fonction $l(\cdot)$ is defined in (12.1) \cite{ICIV},
and $c$ is the \textbf{representation-theoretic
character matrix}, i.e. the matrix whose entries give, 
for $\mu_{L,G}\in\Xi\left(\mathcal{O},{}^{\vee}G^{\Gamma}\right)$,
the decomposition of $\pi(\mu_{L,G})$ into standard representations  
\begin{align}\label{eq:finallemmastandardecomposition}
\pi(\mu_{L,G})=\sum_{\gamma\in \Xi\left(\mathcal{O},{}^{\vee}G^{\Gamma}\right)}
c(\gamma,\mu_{L,G})M(\gamma).
\end{align}
Similarly, for $\mu_{L}\in \Xi\left(\mathcal{O}_L,{}^{\vee}L^{\Gamma}\right)$ we have 
\begin{align}\label{eq:finallemmastandardecomposition2}
\pi(\mu_L)=\sum_{\gamma_L\in \Xi\left(\mathcal{O}_L,{}^{\vee}L^{\Gamma}\right)}
c(\gamma_L,\mu_L)M(\xi_L).
\end{align}
Point (5.) is then equivalent to the equality 
$$
c(\gamma_{L,G},\mu_{L,G})=c(\gamma_L,\mu_L).
$$ 
Now, the decomposition in (\ref{eq:finallemmastandardecomposition2}) and Equality (\ref{eq:1finallemma}) imply that
\begin{align*}
\pi(\mu_{L,G})&=\left({}^{u}\mathscr{R}_{\mathfrak{q},K\cap L}^{\mathfrak{g},K^{can}}\right)^{S}(\pi(\mu_L))\\
&=\left({}^{u}
\mathscr{R}_{\mathfrak{q},K\cap L}^{\mathfrak{g},K^{can}}\right)^{S}\left(\sum_{\gamma_L\in \Xi\left(\mathcal{O}_L,{}^{\vee}L^{\Gamma}\right)}c(\gamma_L,\mu_L)(M(\gamma_L))\right)\\
&=\sum_{\gamma_L\in \Xi\left(\mathcal{O}_L,{}^{\vee}L^{\Gamma}\right)}c(\gamma_L,\mu_L)\left({}^{u}
\mathscr{R}_{\mathfrak{q},K\cap L}^{\mathfrak{g},K^{can}}\right)^{S}(M(\gamma_L)).
\end{align*}
Then by Point (1.) we deduce
\begin{align*}
\pi(\mu_{L,G})=\sum_{\gamma_L\in \Xi\left(\mathcal{O}_L,{}^{\vee}L^{\Gamma}\right)}
c(\gamma_L,\mu_L)M(\gamma_{L,G}) 
\end{align*} 
and by comparing with Equation (\ref{eq:finallemmastandardecomposition}), we conclude
$$
c(\gamma_{L,G},\mu_{L,G}))=c(\gamma_L,\mu_L).
$$
Now that points (1.)-(5.) have been verified,
we can end with the proof of Lemma \ref{lem:lastneededlemma}. 
Since we are only going to be working with parameters in the varieties
$X_{{}^{\vee}{L}}(\Lambda_{L})$ and $X_{{}^{\vee}{G}}(\Lambda)$, 
we omit the prime in the notation of the parameters.
Suppose $\xi_{L}\in \Xi\left(X_{{}^{\vee}L}(\Lambda)\right)$.
We begin by decomposing ${}^{\vee}M(\xi_{L,G})$ into two sums of irreducible representations
\begin{align*}
{}^{\vee}M(\xi_{L,G})&=
\sum_{\substack{\mu\in\Xi\left(X_{{}^{\vee}{G}}(\Lambda)\right)\\
S_{\mu}\subseteq {}^{\vee}K\cdot \iota(X_{{}^{\vee}{L}}(\Lambda_{L}))
}}
{}^{\vee}m(\mu,\xi_{L,G}){}^{\vee}\pi(\mu)+
\sum_{\substack{\gamma\in\Xi\left(X_{{}^{\vee}{G}}(\Lambda)\right)\\
S_{\gamma}\nsubseteq {}^{\vee}K\cdot \iota(X_{{}^{\vee}{L}}(\Lambda_{L}))
}}
{}^{\vee}m(\gamma,\xi_{L,G}){}^{\vee}\pi(\gamma)
\end{align*}
By Point (2.) the parameters in the first sum are in bijection with the set
$\Xi\left(X_{{}^{\vee}{L}}(\Lambda_L)\right)$, so  
\begin{align}\label{eq:equality1final}
{}^{\vee}M(\xi_{L,G})&=
\sum_{\mu_L\in\Xi\left(X_{{}^{\vee}{L}}(\Lambda_L)\right)}
{}^{\vee}m(\mu_{L,G},\xi_{L,G}){}^{\vee}\pi(\mu_{L,G})+
\sum_{\substack{\gamma\in\Xi\left(X_{{}^{\vee}{G}}(\Lambda)\right)\\
S_{\gamma}\nsubseteq {}^{\vee}K\cdot \iota(X_{{}^{\vee}{L}}(\Lambda_{L}))
}}
{}^{\vee}m(\gamma,\xi_{L,G}){}^{\vee}\pi(\gamma)
\end{align}
From Point (3.), ${}^{\vee}M(\xi_{L,G})$ can also be decomposed as
\begin{align}\label{eq:equality2final}
{}^{\vee}M(\xi_{L,G})&=\left({}^{u}\mathscr{R}_{{}^{\vee}\mathfrak{q},{}^{\vee}K\cap {}^{\vee}L}^{{}^{\vee}\mathfrak{g},{}^{\vee}K}\right)^{0}(
{}^{\vee}M(\xi_L))\nonumber\\
&=\left({}^{u}\mathscr{R}_{{}^{\vee}\mathfrak{q},{}^{\vee}K\cap {}^{\vee}L}^{{}^{\vee}\mathfrak{g},{}^{\vee}K}\right)^{0}\left(\sum_{\mu_L\in\Xi(X_{{}^{\vee}L}(\Lambda_L))}
{}^{\vee}m(\mu_L,\xi_L){}^{\vee}\pi(\mu_L)\right)\nonumber\\
&=\left({}^{u}\mathscr{R}_{{}^{\vee}\mathfrak{q},{}^{\vee}K\cap {}^{\vee}L}^{{}^{\vee}\mathfrak{g},{}^{\vee}K}\right)^{0}\left(\sum_{\mu_L\in\Xi(X_{{}^{\vee}L}(\Lambda_L))}
{}^{\vee}m(\mu_{L,G},\xi_{L,G}){}^{\vee}\pi(\mu_L)\right)\quad\qquad\qquad\quad(\text{By point }(5.))\nonumber\\
&=\sum_{\mu_L\in\Xi(X_{{}^{\vee}L}(\Lambda_L))}
{}^{\vee}m(\mu_{L,G},\xi_{L,G})\left({}^{u}\mathscr{R}_{{}^{\vee}\mathfrak{q},{}^{\vee}K\cap {}^{\vee}L}^{{}^{\vee}\mathfrak{g},{}^{\vee}K}\right)^{0}({}^{\vee}\pi(\mu_L)).
\end{align}
For all $\mu_{L}\in \Xi\left(X_{{}^{\vee}{L}}(\Lambda_L)\right)$, set $
\left({}^{u}\mathscr{R}_{{}^{\vee}\mathfrak{q},{}^{\vee}K\cap {}^{\vee}L}^{{}^{\vee}\mathfrak{g},{}^{\vee}K}\right)^{0}({}^{\vee}\pi(\mu_L)):=\mathscr{R}({}^{\vee}\pi(\mu_{L}))$.
Since by Point (4.), for any parameter $\mu_{L}\in \Xi\left(X_{{}^{\vee}{L}}(\Lambda_{L})\right)$ the module
${}^{\vee}\pi(\mu_{L,G})$ appears in 
$\mathscr{R}(
{}^{\vee}\pi(\mu_L)
)$ with multiplicity one, we can write 
\begin{align}\label{eq:equality3final}
\mathscr{R}({}^{\vee}\pi(\mu_L))=
{}^{\vee}\pi(\mu_{L,G})+\sum_{
\substack{\eta\in\Xi(X_{{}^{\vee}G}(\Lambda))\\
\eta\neq\mu_{L,G}\qquad
}}\text{mult}\left({}^{\vee}\pi(\eta),\mathscr{R}({}^{\vee}\pi(\mu_{L}))\right) {}^{\vee}\pi(\eta).
\end{align}
Hence by Equality (\ref{eq:equality2final}) 
\begin{align}\label{eq:equality4final}
{}^{\vee}M(\xi_{L,G})=&\sum_{\mu_L\in\Xi(X_{{}^{\vee}L}(\Lambda_L))}
{}^{\vee}m(\mu_{L,G},\xi_{L,G})\left(
{}^{\vee}\pi(\mu_{L,G})\quad+\sum_{
\substack{\eta\in\Xi(X_{{}^{\vee}G}(\Lambda))\\
\eta\neq\mu_{L,G}\qquad
}} \text{mult}\left({}^{\vee}\pi(\eta),\mathscr{R}({}^{\vee}\pi(\mu_{L}))\right) {}^{\vee}\pi(\eta)
\right)\nonumber\\
=&\sum_{\mu_L\in\Xi(X_{{}^{\vee}L}(\Lambda_L))}
{}^{\vee}m(\mu_{L,G},\xi_{L,G})
{}^{\vee}\pi(\mu_{L,G})\quad+\nonumber\\
&\quad\sum_{\substack{(\mu_L,\eta)\in\Xi(X_{{}^{\vee}L}(\Lambda_L))\times\Xi(X_{{}^{\vee}G}(\Lambda))
\\
\eta\neq\mu_{L,G}\qquad\qquad\qquad}}{}^{\vee}m(\mu_{L,G},\xi_{L,G})
\text{mult}\left({}^{\vee}\pi(\eta),\mathscr{R}({}^{\vee}\pi(\mu_{L}))\right) {}^{\vee}\pi(\eta).
\end{align}
Now, for $\xi_{L}$ we have ${}^{\vee}m(\xi_{L,G},\xi_{L,G})=1$. Then since the first sum in (\ref{eq:equality1final}) is equal to the first sum in the second equality of (\ref{eq:equality4final}), 
we deduce 
that the integers $\text{mult}\left({}^{\vee}\pi(\eta),\mathscr{R}({}^{\vee}\pi(\xi_{L}))\right) $ in (\ref{eq:equality3final}), satisfy
$$ \text{mult}\left({}^{\vee}\pi(\eta),\mathscr{R}({}^{\vee}\pi(\xi_{L}))\right) =0\quad\text{ for all }\eta\text{ with }\quad S_{\eta}\subset {}^{\vee}K\cdot \iota(X_{{}^{\vee}{L}}(\Lambda_{L})).$$
Therefore
$$
\mathscr{R}\left({}^{\vee}\pi(\xi_L)\right)=
{}^{\vee}\pi(\xi_{L,G})+\sum_{
\substack{\eta\in\Xi(X_{{}^{\vee}G}(\Lambda))\\
S_{\eta}\nsubseteq {}^{\vee}K\cdot \iota(X_{{}^{\vee}{L}}(\Lambda_{L}))
}}\text{mult}\left({}^{\vee}\pi(\eta),\mathscr{R}({}^{\vee}\pi(\xi_{L}))\right) {}^{\vee}\pi(\eta).
$$
and the lemma follows. 
\end{proof}
\bibliographystyle{alpha}
\bibliography{reference}
\noindent\rule{2cm}{0.4pt}\\
~\\
Nicol\'as Arancibia Robert, CY Cergy Paris Université - Carleton University $\bullet$ \textit{E-mail:} \textbf{nicolas.arancibia-robert@cyu.fr}
\end{document}